\documentclass[11pt]{amsart}
\usepackage{graphicx,subcaption}
\usepackage{amsmath} 
\usepackage{amsthm,amsfonts,amssymb,mathrsfs,amscd,amstext,amsbsy} 
\usepackage{epic,eepic} 
\usepackage{yfonts}
\usepackage{paralist,enumerate}
\usepackage[all]{xy}
\usepackage{hyperref}
\hypersetup{colorlinks}
\usepackage{tikz}
\usetikzlibrary {positioning}
\definecolor {processblue}{cmyk}{0.96,0,0,0}
\usetikzlibrary{arrows}
\tikzset{    vertex/.style={circle,draw,minimum size=1.5em},    edge/.style={->,> = latex'}}

\newtheorem{theorem}{Theorem}[section]
\newtheorem{cor}[theorem]{Corollary}
\newtheorem{lemma}[theorem]{Lemma}

\newtheorem{theo}[theorem]{Theorem}
\newtheorem{lem}[theorem]{Lemma}
\newtheorem{pro}[theorem]{Proposition}

\newtheorem{exa}[theorem]{Example}

\newtheorem{Definition}[theorem]{Definition}
\newtheorem*{Definition*}{Definition}

\def\qed{\hfill \ifhmode\unskip\nobreak\fi\quad\ifmmode\Box\else$\Box$\fi\\ }

\begin{document}

\title[Four dimensional almost complex torus manifolds]{Four dimensional almost complex torus manifolds}
\author{Donghoon Jang}
\thanks{2020 Mathematics Subject Classifications: 57M60, 58D19, 58C30, 32Q60, 57S12, 14M25, 32M05}
\thanks{Key words: almost complex torus manifold, multi-fan, graph, blow up, blow down}
\thanks{Donghoon Jang was supported by the National Research Foundation of Korea(NRF) grant funded by the Korea government(MSIT) (2021R1C1C1004158).}
\address{Department of Mathematics, Pusan National University, Pusan, South Korea, 46241}
\email{donghoonjang@pusan.ac.kr}

\begin{abstract}
An (almost) complex torus manifold is a $2n$-dimensional compact connected (almost) complex manifold equipped with an effective action of a real $n$-dimensional torus $T^n$ that has fixed points. Let $M$ be a 4-dimensional almost complex torus manifold. To $M$, we associate two equivalent combinatorial objects, a family $\Delta$ of multi-fans and a graph $\Gamma$, which encode the data on the fixed point set. We find a necessary and sufficient condition for each of $\Delta$ and $\Gamma$, and provide a minimal model and operations for each of $\Delta$ and $\Gamma$, which correspond to blow up and down of $M$. We discuss applications, including ones to complex manifolds.




\end{abstract}

\maketitle

\tableofcontents

\section{Introduction and statements of results} \label{s1}

In the theory of toric varieties, there is a correspondence between toric varieties and combinatorial objects, called fans.
In dimension 4, we extend the correspondence between compact nonsingular toric varieties and regular fans to a correspondence between almost complex torus manifolds and families of multi-fans in a geometric way. Moreover, we provide a minimal model and operations for a family of multi-fans associated to a 4-dimensional almost complex torus manifold. Furthermore, we discuss various applications.

An \textbf{(almost) complex torus manifold} is a $2n$-dimensional compact connected (almost) complex manifold equipped with an effective action of a real $n$-dimensional torus $T^n$ that has fixed points. 

We will now explain the main results of this paper in slightly more detail. Let $M$ be a 4-dimensional almost complex torus manifold.
\begin{enumerate}[(1)]
\item (Subsection \ref{s1.2}) To $M$, we associate two equivalent combinatorial objects, a family $\Delta$ of multi-fans and a (directed labeled) graph $\Gamma$. These each contain information about the data on the fixed point set of $M$, the weights at the fixed points and the isotropy spheres connecting fixed points.
\item (Subsection \ref{s1.2}) We present a necessary and sufficient condition for each of $\Delta$ and $\Gamma$ (Theorem \ref{tt19}).
\item (Subsection \ref{s1.3}) We provide a minimal model and operations for each of $\Delta$ and $\Gamma$. We introduce two operations on $\Delta$ and $\Gamma$ that correspond to equivariant blow up and down of $M$.
 Theorem \ref{tt114} states that we can equivariantly blow up and down $M$ ($\Delta$ and $\Gamma$) to another 4-dimensional almost complex torus manifold $M'$ (another family $\Delta'$ of multi-fans and another graph $\Gamma'$), which is minimal in the sense that every weight at a fixed point of $M'$ is a unit vector in $\mathbb{Z}^2$ ($\Delta'$ has only unit vectors and the label of each edge of $\Gamma'$ is a unit vector.)
\item (Subsection \ref{s1.3}) Theorem \ref{tt114a} establishes the existence result that for any family $\Delta$ of multi-fans (or a graph $\Gamma$) obtained from our minimal model and operations, there exists a 4-dimensional almost complex torus manifold whose fixed point data is contained in $\Delta$ ($\Gamma$).
\item (Subsection \ref{s1.4}) We discuss applications of our results. When $M$ is complex, we can blow up and down $M$ to $\mathbb{CP}^1 \times \mathbb{CP}^1$ (Theorem \ref{tt115}). Consequently, it follows that any two 4-dimensional complex torus manifolds are obtained from each other by blow up and down (Corollary \ref{tc116}). This implies the known fact that any two rational surfaces are obtained from each other by blow up and down. Additionally, our fan determines a 4-dimensional complex torus manifold up to equivariant biholomorphism (Corollary \ref{tc117}). Also, when $M$ is complex, $\Gamma$ is a GKM(Goresky-Kottwitz-MacPherson) graph if we forget the direction of each edge, and hence $\Gamma$ encodes the equivariant cohomology of $M$ (Corollary \ref{c120}).
\end{enumerate}

\subsection{Preliminaries} \label{s1.1}

An \textbf{almost complex manifold} $(M,J)$ is a manifold $M$ with a smooth bundle map $J:TM \to TM$ that restricts to a linear complex structure on each tangent space, called an \textbf{almost complex structure}. An action of a group $G$ on an (almost) complex manifold $(M,J)$ is said to \textbf{preserve the (almost) complex structure} if $dg \circ J=J \circ dg$ for all $g \in G$. Throughout this paper, we assume that any group action on an (almost) complex manifold preserves a given (almost) complex structure.

For an action of a group $G$ on a manifold $M$, we denote its fixed point set as $M^G$ its fixed point set, which is the set of points in $M$ fixed by the $G$-action. That is,
\begin{center}
$M^G=\{m \in M \, | \, g \cdot m = m \textrm{ for all } g \in G\}$.
\end{center}

Let a $k$-dimensional torus $T^k \cong (S^1)^k$ act on a $2n$-dimensional almost complex manifold $M$. Let $F$ be a fixed component. Let $\dim F=2m$. For $p \in F$, the normal space $N_pF$ of $F$ at $p$ decomposes into the sum of $n-m$ complex 1-dimensional vector spaces
\begin{center}
$N_pF=L_{F,1} \oplus \cdots \oplus L_{F,n-m}$,
\end{center}
where on each $L_{F,i}$ the torus $T^k$ acts by multiplication by $g^{w_{F,i}}$ for all $g \in T^k$, for some non-zero $w_{F,i} \in \mathbb{Z}^k$, $1 \leq i \leq n-m$. Here, for $g=(g_1,\cdots,g_k) \in T^k \cong (S^1)^k \subset \mathbb{C}^n$ and $w=(w_1,\cdots,w_k) \in \mathbb{Z}^k$, by $g^w$ we mean $g^w:=g_1^{w_1} \cdots g_k^{w_k}$. These elements $w_{F.1}$, $\cdots$, $w_{F,n-m}$ are the same for all $p \in F$ and are called \textbf{weights} of $F$.

\subsection{Association and necessary and sufficient condition} \label{s1.2}

In this subsection, to a 4-dimensional almost complex torus manifold, we associate a family of multi-fans and a graph, and present a necessary and sufficient condition for each object. First, we associate a family of multi-fans.

\begin{Definition} \label{td11}
A (2-dimensional) \textbf{multi-fan} $V$ is a sequence of non-zero vectors $v_1$, $v_2$, $\cdots$, $v_k$ in $\mathbb{Z}^2$ such that $v_1$, $v_2$, $\cdots$, $v_k$, $v_1$ are in counterclockwise order or in clockwise order. 
For a multi-fan $V$, we define $T(V)$ to be the number of revolutions made by the sequence $v_1, v_2, \ldots, v_k, v_1$ around the origin of $\mathbb{R}^2$ and call it the \textbf{winding number} of $V$. A multi-fan $V=\{v_1,v_2,\cdots,v_k\}$ is a \textbf{fan} if $T(V)=1$. 
\end{Definition}

\begin{Definition} \label{td12}
We say that a multi-fan $V=\{v_1,v_2,\cdots,v_k\}$ is \textbf{admissible} if the following hold:
\begin{enumerate}[(1)]
\item For each $1 \leq i \leq k$, $v_{i-1}$ and $v_i$ form a basis of $\mathbb{Z}^2$ ($v_0:=v_k$).
\item $v_{i+1}=-a_i v_i-v_{i-1}$ for some integer $a_i$, $1 \leq i \leq k$ ($v_0:=v_k$ and $v_{k+1}:=v_1$).
\end{enumerate}
\end{Definition}

\begin{Definition} \label{tsphere}
Let a $k$-dimensional torus $T^k$ act on an almost complex manifold $M$. Let $p$ and $q$ be fixed points and let $w \in \mathbb{Z}^k$. We say that an ordered pair $(p,q)$ is a \textbf{w-sphere}, if $M$ has an almost complex submanifold $\mathbb{CP}^1$ where the $T^k$-action on $M$ restricts to act by 
$$g \cdot [z_0:z_1]=[z_0:g^{w} z_1],$$
such that the $T^k$-action on this $\mathbb{CP}^1$ has fixed points $p=[1:0]$ and $q=[0:1]$ with weights $w$ and $-w$ for this action, respectively. 
\end{Definition}

\begin{Definition} \label{td13}
Let $M$ be a 4-dimensional almost complex torus manifold. We say that a family of multi-fans $V_1,V_2,\cdots,V_m$  \textbf{describes} $M$ if we can partition its fixed point set $M^{T^2}$ into disjoint sets $A_1$, $A_2$, $\cdots$, $A_m$, so that for each $A_j=\{p_{j,1},\cdots,p_{j,k_j}\}$, $(p_{j,i},p_{j,i+1})$ is $v_{j,i}$-sphere, for $1 \leq i \leq k_j$ and $1 \leq j \leq m$, where $V_j=\{v_{j,1},v_{j,2},\cdots,v_{j,k_j}\}$, $v_{j,0}=v_{j,k_j}$, and $p_{j,k_j+1}=p_{j,1}$.
\end{Definition}

In particular, the weights at $p_{j,i}$ are $\{v_{j,i},-v_{j,i-1}\}$ for $1 \leq i \leq k_j$ and $1 \leq j \leq m$. See Section \ref{s2} for basic examples of (families of multi-)fans describing 4-dimensional (almost) complex torus manifolds. Next, to a 4-dimensional almost complex torus manifold, we associate a certain type of graph that contains information about its fixed point data.

\begin{Definition} \label{td14} \cite{J6}
A \textbf{labeled directed k-multigraph} $\Gamma$ is a set $V$ of vertices, a set $E$ of
edges, maps $i : E \to V$ and $t : E \to V$ giving the initial and terminal vertices of
each edge, and a map $w:E \to \mathbb{Z}^k$ giving the label of each edge.
\end{Definition}

A multigraph is a \textbf{graph} if it has no multiple edges between any two vertices.

\begin{Definition}
Let $\Gamma$ be a labeled directed k-graph. For an edge $e$, we say that $(i(e),t(e))$ is $w(e)$-edge, and $(t(e),i(e))$ is $(-w(e))$-edge.
\end{Definition}

Let $w=(w_1,\cdots,w_k) \in \mathbb{Z}^k$. Denote by $\ker w$ the subgroup of $T^k$ whose elements fix $w$;
\begin{center}
$\ker w=\{g=(g_1,\cdots,g_k) \in T^k \subset \mathbb{C}^k \, | \, g^w=g_1^{w_1} \cdots g_k^{w_k}=1\}$.
\end{center}

\begin{Definition} \label{d12} \cite{J6}
Let a $k$-dimensional torus $T^k$ act on a compact almost complex manifold $M$ with isolated fixed points. We say that a (labeled directed k-)multigraph $\Gamma=(V,E)$ \textbf{describes} $M$ if the following hold:
\begin{enumerate}[(i)]
\item The vertex set $V$ is equal to the fixed point set $M^{T^k}$.
\item The multiset of the weights at $p$ is $\{w(e) \, | \, i(e)=p\} \cup \{-w(e) \, | \, t(e)=p\}$ for all $p \in M^{T^k}$.
\item For each edge $e$, the two endpoints $i(e)$ and $t(e)$ are in the same component of the isotropy submanifold $M^{\ker w(e)}$.
\end{enumerate}
\end{Definition}

For an edge $e$, the fixed point $i(e)$ has weight $w(e)$ and the fixed point $t(e)$ has weight $-w(e)$.

Definition \ref{d12} is an extension to a torus action, of a multigraph for an $S^1$-action on a compact almost complex manifold with isolated fixed points in \cite{JT, J5}. For an $S^1$-action, Godinho and Sabatini also considered a multigraph \cite{GS}. For a torus action on a compact almost complex manifold with isolated fixed points, if the weights at each fixed point are pairwise linearly independent (and the odd cohomology groups of the manifold vanish), a multigraph of Definition \ref{d12} is called a signed GKM graph in \cite{GKZ}.

Let $M$ be a 4-dimensional almost complex torus manifold. Let $\Gamma$ be a (labeled directed 2-)graph describing $M$. Definition \ref{d12} implies that $\Gamma$ not only contains the multisets $\{w_{p1},w_{p2}\}$ of the weights at the fixed points $p \in M^{T^2}$, but also contains information that inside $M$ there are chains of isotropy 2-spheres $e$ (which correspond to edges of $\Gamma$), each of which connects two fixed points $i(e)$ and $t(e)$; see Proposition \ref{tp23}. A graph describing $M$ is always admissible, see Proposition \ref{tp24}. In this case, $\Gamma$ naturally gives rise to a family $\Delta$ of admissible multi-fans describing $M$; see Lemma \ref{tl21}. Therefore, a family of multi-fans describing $M$ and a graph describing $M$ each determine a neighborhood of 2-spheres connecting fixed points. Consequently, these become invariants of a 4-dimensional almost complex torus manifold. Any equivariant biholomorphism $f:M_1 \to M_2$ between two 4-dimensional almost complex torus manifolds $M_1$ and $M_2$ takes fixed points to fixed points and takes $w$-spheres to $w$-spheres.

\begin{Definition} \label{td17}
Let $\Gamma$ be a 2-regular labeled directed 2-graph. We say that $\Gamma$ is \textbf{admissible} if the following hold.
\begin{enumerate}[(1)]
\item For any vertex $v$, the labels of the edges of $v$ form a basis of $\mathbb{Z}^2$.
\item If $(p_1,p_2)$ is $w_1$-edge, $(p_2,p_3)$ is $w_2$-edge, and $(p_3,p_4)$ is $w_3$-edge, then $w_3=-aw_2-w_1$ for some integer $a$.
\end{enumerate}
\end{Definition}

For a 4-dimensional almost complex torus manifold $M$, we provide a necessary and sufficient condition for the data regarding the fixed point set. This is accomplished by establishing a necessary and sufficient condition for a family of multi-fans that describes $M$. Equivalently, we establish a necessary and sufficient condition for a graph that describes $M$.

\begin{theorem} \label{tt19}
There exists a 4-dimensional almost complex torus manifold if and only if there exists a finite family of admissible multi-fans describing $M$ if and only if there exists an admissible labeled directed 2-graph describing $M$.
\end{theorem}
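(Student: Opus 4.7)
The plan is to split the equivalence into three implications: (i) a 4-dimensional almost complex torus manifold $M$ gives rise to an admissible describing graph $\Gamma$; (ii) admissible describing graphs $\Gamma$ and admissible describing families of multi-fans $\Delta$ are interconvertible; (iii) any admissible $\Delta$ or $\Gamma$ is realized by some $M$. For (iii) I invoke the realization statement of Theorem \ref{tt114a} (reduction to minimal models via blow-ups and blow-downs), so the bulk of the work is in (i) and (ii).

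For (i), since the effective $T^2$-action on the $4$-manifold $M$ has fixed points, its fixed points are isolated, and at each fixed point $p$ the tangent weights $\{w_{p,1},w_{p,2}\} \subset \mathbb{Z}^2$ are a basis of $\mathbb{Z}^2$, giving Definition \ref{td17}(1). For each weight $w$ at $p$, the component of $M^{\ker w}$ through $p$ is a closed almost complex $2$-surface fixed by a codimension-one subtorus, hence (by the standard equivariant classification of circle actions on a $2$-sphere) a $w$-sphere in the sense of Definition \ref{tsphere}, with a second $T^2$-fixed point $q$ where the weight along the sphere is $-w$. Taking these as labeled directed edges gives $\Gamma$ and verifies conditions (i)--(iii) of Definition \ref{d12}.

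The delicate step is admissibility condition (2): given consecutive edges with labels $w_1,w_2,w_3$, the weights $\{w_2,-w_1\}$ at $p_2$ and $\{w_3,-w_2\}$ at $p_3$ are both $\mathbb{Z}^2$-bases, so $w_3 = \alpha w_2 + \beta w_1$ with $\beta = \pm 1$. Pinning down $\beta = -1$ is the main obstacle; it is an orientation argument comparing the orientations on the normal bundle of the isotropy sphere $M^{\ker w_2}$ at its two $T^2$-fixed points, induced from the almost complex structure on $M$, and matching them against the $T^2/\ker w_2$-weights on these normal fibres.

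For (ii), $\Gamma$ has exactly two incident edges at every vertex, so it decomposes into disjoint cycles; reading the labels cyclically around each cycle, flipping signs on edges traversed against their orientation, produces the multi-fans $V_j$. Definition \ref{td12}(1) and (2) for each $V_j$ are then direct restatements of Definition \ref{td17}(1) and (2), and the counter-clockwise (or clockwise) cyclic ordering follows from the same orientation argument used for the sign. The inverse map from $\Delta$ to $\Gamma$ turns each cyclic sequence of vectors into a cyclic chain of labeled directed edges. Finally, for (iii), any admissible combinatorial datum can be reduced by inverse blow-up operations to a minimal one and then realized by a standard example (e.g.\ $\mathbb{CP}^2$ or $\mathbb{CP}^1 \times \mathbb{CP}^1$) equipped with the corresponding chain of equivariant blow-ups, which is Theorem \ref{tt114a}.
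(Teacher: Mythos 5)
Your skeleton (manifold $\Rightarrow$ admissible graph $\Leftrightarrow$ admissible family of multi-fans $\Rightarrow$ manifold) matches the paper's, and you correctly locate the delicate point of the first implication: showing that the coefficient of $w_1$ in $w_3=\alpha w_2+\beta w_1$ is exactly $-1$. But you leave that step as an unexecuted ``orientation argument.'' The paper proves it (Lemma \ref{tl23}) by identifying the equivariant normal bundle of the isotropy sphere $(p_2,p_3)$ with $\mathcal{O}(a)$ carrying the action $g\cdot[z_1,z_2,w]=[z_1,g^{u_1}z_2,g^{u_2}w]$ and reading off the weights $\{u_1,u_2\}$ and $\{-u_1,-au_1+u_2\}$ at its two fixed points; this gives $w''=-aw+w'$ in one line and simultaneously pins down the $\pm1$ sign. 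You should also say a word about why the resulting object is a \emph{graph} (no self-loops or multiple edges); the paper needs Proposition \ref{tp22} and a short argument via Lemma \ref{tl23} for this.

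The more serious problem is the realization direction. Citing Theorem \ref{tt114a} is circular in the paper's logical order: \ref{tt114a} is proved by observing that $\Delta$ is admissible and then invoking Theorem \ref{tt33}, which \emph{is} the realization statement you are trying to prove. Your independent sketch --- reduce to a minimal model and realize it as $\mathbb{CP}^2$ or $\mathbb{CP}^1\times\mathbb{CP}^1$ with a chain of equivariant blow-ups --- fails in general: a family with several multi-fans, or a single multi-fan with winding number $T(V)\geq 2$, describes a manifold of Todd genus $\geq 2$ (Proposition \ref{tp63}), and blow-up/blow-down preserves the Todd genus, so such data can never be reached from those complex surfaces. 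Moreover, geometric blow-up and blow-down of a merely almost complex manifold require local integrability of $J$ (Lemmas \ref{tl41} and \ref{tl42}), which you cannot assume. The paper instead realizes an arbitrary admissible multi-fan directly by equivariant plumbing of the disk bundles $D_{a_i}(v_i,-v_{i-1})$ of $\mathcal{O}(a_i)$ and capping off with $D^2\times T^2$ (Theorem \ref{tt31}), and handles families by equivariant connected sum along free orbits (Lemma \ref{tl32}). Without this construction, or a substitute for it, your proof of the existence half is incomplete.
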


In this paper, we do not give an equivariant classification of 4-dimensional almost complex torus manifolds, while we give one for complex manifolds.
Given a 4-dimensional almost complex torus manifold, one may partially recover the equivariant diffemorphism type of the manifold from a family of multi-fans and/or a graph describing it. For this, one may consider the classification of $T^2$-actions on oriented 4-manifolds \cite{OR1, OR2} and see which of them admit an almost complex structure \cite{Ku}.

We inform that for an almost complex torus manifold, Masuda associated a multi-fan in terms of characteristic submanifolds, which are certain codimension 2 submanifolds fixed by an action of some subcircle of the torus \cite{M}; also see \cite{HM}. In contrast, we associate a family of multi-fans, which provides a clearer representation of the geometry of the given manifold.

\subsection{Minimal model and operations}\label{s1.3}

In this subsection, we present a minimal model and operations for each of a family of multi-fans and a graph associated to a 4-dimensional almost complex torus manifold.

\begin{Definition} \label{td110}
We say that a multi-fan $V=\{v_1,v_2,\cdots,v_k\}$ is \textbf{minimal} if each $v_i$ is a unit vector in $\mathbb{Z}^2$.
\end{Definition}

In other words, a multi-fan $V=\{v_1,v_2,\cdots,v_k\}$ is minimal if $v_{4j+1}=(1,0)$, $v_{4j+2}=(0,a)$, $v_{4j+3}=(-1,0)$, $v_{4j+4}=(0,-a)$ for some $a \in \{-1,1\}$, for all $0 \leq j \leq s-1$ where $k=4s$, up to shifting of indices. By definition, a minimal multi-fan has $4s$ vectors for some $s \in \mathbb{N}$. We introduce a corresponding notion for a graph.

\begin{Definition} \label{td111}
We say that a 2-regular connected labeled directed 2-graph is \textbf{minimal} if we can label successive vertices by $p_1$, $p_2$, $\cdots$, $p_{4k}$ so that for $0 \leq j \leq k-1$, $(p_{4j+1},p_{4j+2})$ is $(1,0)$-edge, $(p_{4j+2},p_{4j+3})$ is $(0,a)$-edge, $(p_{4j+3},p_{4j+4})$ is $(-1,0)$-edge, and $(p_{4j+4},p_{4j+5})$ is $(0,-a)$-edge for some $a \in \{-1,1\}$ (with $p_{4k+1}:=p_1$). We say that a 2-regular labeled directed 2-graph $\Gamma$ is \textbf{minimal} if each component of $\Gamma$ is minimal.
\end{Definition}

Next, we define blow up and blow down of a multi-fan and a graph; these correspond to equivariant blow up and blow down of a 4-dimensional almost complex torus manifold.

\begin{Definition}[\textbf{Blow up and blow down of a multi-fan}] \label{td112}
Let $V=\{v_1,v_2,\cdots,v_k\}$ be a multi-fan. Denote $v_{k+1}:=v_1$ and $v_0:=v_k$.
\begin{enumerate}[(1)]
\item A \textbf{blow up} of $(v_i,v_{i+1})$ is an operation that inserts their sum $v_i+v_{i+1}$ between $v_i$ and $v_{i+1}$. That is, a blow up of $(v_i,v_{i+1})$ in $V$ is a multi-fan $V'=\{v_1,\cdots,v_i,v_i+v_{i+1},v_{i+1},\cdots,v_k\}$ (Figure \ref{fig1-2}). 
\item Suppose that three successive vectors $v_i$, $v_{i+1}$, and $v_{i+2}$ satisfy $v_{i+1}=v_i+v_{i+2}$ for some $i$ (Figure \ref{fig2-1}). A \textbf{blow down} of $v_{i+1}$ is an operation that deletes $v_{i+1}$ (Figure \ref{fig2-2}). That is, a blow down of $v_{i+1}$ in $V$ is a multi-fan $V'=\{v_1,\cdots,v_i,v_{i+2},\cdots,v_k\}$.
\end{enumerate}
\end{Definition}

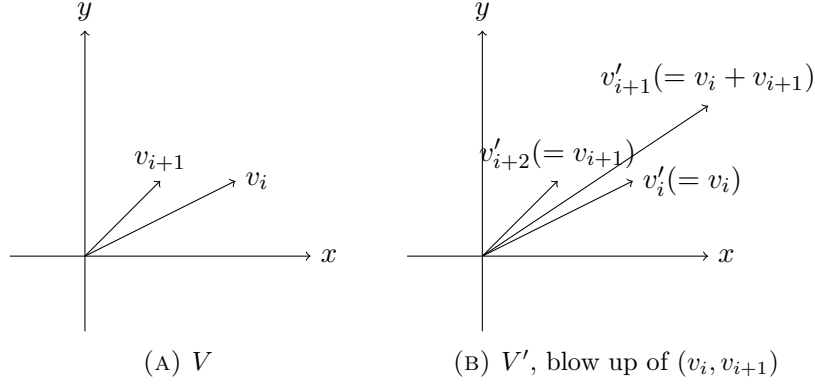
\begin{figure}
\centering
\begin{subfigure}[b][4.8cm][s]{.45\textwidth}
\centering
\begin{tikzpicture}
\draw[->] (-1, 0) -- (3, 0) node[right] {$x$};
\draw[->] (0, -1) -- (0, 3) node[above] {$y$};
\draw[->] (0, 0) -- (2, 1) node[right] {$v_{i}$};
\draw[->] (0, 0) -- (1, 1) node[above] {$v_{i+1}$};
\end{tikzpicture}
\caption{$V$}\label{fig1-1}
\end{subfigure}
\begin{subfigure}[b][4.8cm][s]{.45\textwidth}
\centering
\begin{tikzpicture}
\draw[->] (-1, 0) -- (3, 0) node[right] {$x$};
\draw[->] (0, -1) -- (0, 3) node[above] {$y$};
\draw[->] (0, 0) -- (2, 1) node[right] {$v_i'(=v_{i})$};
\draw[->] (0, 0) -- (1, 1) node[above] {$v_{i+2}'(=v_{i+1})$};
\draw[->] (0, 0) -- (3, 2) node[above] {$v_{i+1}'(=v_{i}+v_{i+1})$};
\end{tikzpicture}
\caption{$V'$, blow up of $(v_{i},v_{i+1})$}\label{fig1-2}
\end{subfigure}\qquad
\caption{Blow up of a multi-fan}\label{fig1}
\end{figure}

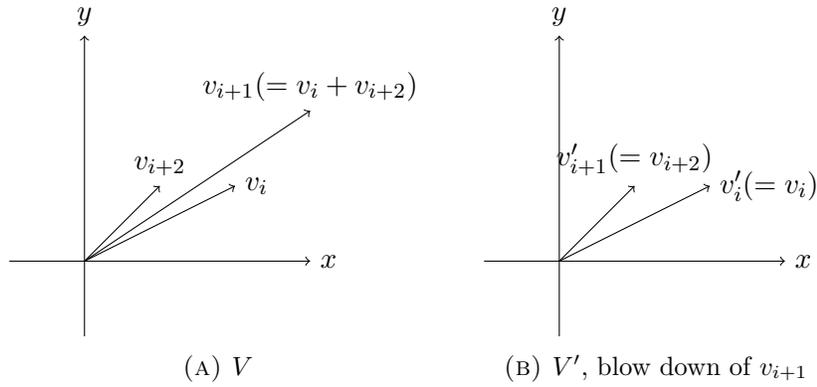
\begin{figure}
\centering
\begin{subfigure}[b][5.4cm][s]{.45\textwidth}
\centering
\begin{tikzpicture}
\draw[->] (-1, 0) -- (3, 0) node[right] {$x$};
\draw[->] (0, -1) -- (0, 3) node[above] {$y$};
\draw[->] (0, 0) -- (2, 1) node[right] {$v_{i}$};
\draw[->] (0, 0) -- (1, 1) node[above] {$v_{i+2}$};
\draw[->] (0, 0) -- (3, 2) node[above] {$v_{i+1}(=v_i+v_{i+2})$};
\end{tikzpicture}
\caption{$V$}\label{fig2-1}
\end{subfigure}
\begin{subfigure}[b][5.4cm][s]{.45\textwidth}
\centering
\begin{tikzpicture}
\draw[->] (-1, 0) -- (3, 0) node[right] {$x$};
\draw[->] (0, -1) -- (0, 3) node[above] {$y$};
\draw[->] (0, 0) -- (2, 1) node[right] {$v_{i}'(=v_i)$};
\draw[->] (0, 0) -- (1, 1) node[above] {$v_{i+1}'(=v_{i+2})$};
\end{tikzpicture}
\caption{$V'$, blow down of $v_{i+1}$}\label{fig2-2}
\end{subfigure}\qquad
\caption{Blow down of a multi-fan}\label{fig2}
\end{figure}

If $v_i$ and $v_{i+1}$ are in (counter)clockwise order, then $v_i$, $v_i+v_{i+1}$, $v_{i+1}$ are also in (counter)clockwise order. Therefore, blow up of a multi-fan is also a multi-fan. Similarly, blow down of a multi-fan is also a multi-fan.

\begin{Definition}[\textbf{Blow up and down of a graph}] \label{td113}
Let $\Gamma$ be a 2-regular labeled directed graph.
\begin{enumerate}[(1)]
\item Suppose that $(p',p)$ is $w_1$-edge and $(p,p'')$ is $w_2$-edge (Figure \ref{tfig1-1}). A \textbf{blow up} of $p$ is an operation that replaces $p$ with $(w_1+w_2)$-edge $(p_1,p_2)$ so that $(p',p_1)$ is $w_1$-edge and $(p_2,p'')$ is $w_2$-edge (Figure \ref{tfig1-2}).
\item Suppose that $(p',p_1)$ is $w_1$-edge, $(p_1,p_2)$ is $(w_1+w_2)$-edge, and $(p_2,p'')$ is $w_2$-edge (Figure \ref{tfig1-2}). A \textbf{blow down} of $(p_1,p_2)$ is an operation that shrinks the edge $(p_1,p_2)$ to a vertex $p$ (Figure \ref{tfig1-1}).
\end{enumerate}
\end{Definition}

\begin{figure}
\centering
\begin{subfigure}[b][6.2cm][s]{.4\textwidth}
\centering
\vfill
\begin{tikzpicture}[state/.style ={circle, draw}]
\node[state] (a) {$p'$};
\node[state] (b) [above right=of a] {$p$};
\node[state] (c) [above left=of b] {$p''$};
\path (a) [->] edge node[right] {$w_1$} (b);
\path (b) [->] edge node [right] {$w_2$} (c);
\end{tikzpicture}
\vfill
\caption{Blow down of $(p_1, p_2)$}\label{tfig1-1}
\end{subfigure}
\begin{subfigure}[b][6.2cm][s]{.4\textwidth}
\centering
\begin{tikzpicture}[state/.style ={circle, draw}]
\node[state] (a) {$p'$};
\node[state] (b) [above right=of a] {$p_1$};
\node[state] (c) [above=of b] {$p_2$};
\node[state] (d) [above left=of c] {$p''$};
\path (a) [->] edge node[right] {$w_1$} (b);
\path (b) [->] edge node [left] {$w_1+w_2$} (c);
\path (c) [->] edge node [left] {$w_2$} (d);
\end{tikzpicture}
\caption{Blow up of $p$}\label{tfig1-2}
\end{subfigure}\qquad
\caption{Blow up and blow down of a graph}\label{tfig1}
\end{figure}
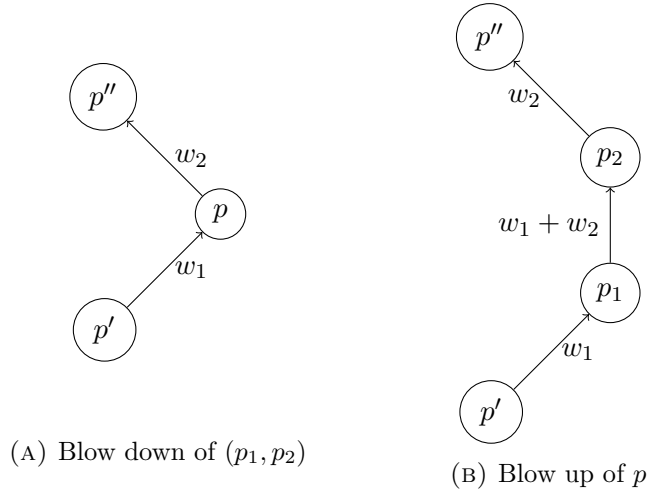

In the case that the almost complex structure is locally integrable, blow up and down of a 4-dimensional almost complex torus manifold, a family of multi-fans describing it, and a graph describing it all correspond to each other (Proposition \ref{tp43}). With these notions, we state the main result of this subsection.

\begin{theorem} \label{tt114}
Let $M$ be a 4-dimensional almost complex torus manifold. Let $\Delta$ and $\Gamma$ be a family of multi-fans and a graph describing $M$. Then the following hold.
\begin{enumerate}[(1)]
\item We can blow up and down $\Delta$ to a family $\Delta'$ of minimal multi-fans,
\item We can blow up and down $\Gamma$ to a minimal graph $\Gamma'$.
\end{enumerate}
Assume furthermore that the almost complex structure is integrable near each $w$-sphere. Then we can equivariantly blow up and down $M$ to another 4-dimensional almost complex torus manifold $M'$ whose weights at the fixed points are all unit vectors in $\mathbb{Z}^2$; moreover, we can take blow up and down of a manifold, a family of multi-fans, and a graph to correspond to each other, so that the family $\Delta'$ of minimal multi-fans and the minimal graph $\Gamma'$ each describe $M'$. 
\end{theorem}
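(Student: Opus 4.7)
My plan is to treat the combinatorial statements ((1) and (2)) and the geometric statement separately, since the integrability hypothesis is only needed for the last.

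I would first record the effect of the two operations on the admissibility coefficients $a_i$ defined by $v_{i+1}=-a_iv_i-v_{i-1}$. A direct computation shows that blow-up of $(v_i,v_{i+1})$ replaces $(a_i,a_{i+1})$ by $(a_i-1,-1,a_{i+1}-1)$, and the inverse blow-down of $v_j$ (valid exactly when $a_j=-1$) replaces $(a_{j-1},-1,a_{j+1})$ by $(a_{j-1}+1,a_{j+1}+1)$. Both operations preserve admissibility and the winding number, and the integer $\sum_i a_i+3|V|$ is an invariant equal to $12s$, where $s$ is the winding number; this matches the $4s$ zero-coefficients of a minimal multi-fan.

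The reduction itself I would prove by induction, in the spirit of the classical argument for smooth complete toric surfaces. For any non-minimal $V$, perform the following reduction step: if some $a_j=-1$, blow down $v_j$; otherwise, make a preparatory blow-up that creates such a coefficient, chosen so that the subsequent blow-down strictly decreases a suitable well-founded complexity (for instance a lexicographic pair involving $|V|$, $\sum_i \|v_i\|$, and $\sum_i|a_i|$). The case analysis is carried out after an $SL(2,\mathbb{Z})$ change of basis placing $\{v_{j-1},v_j\}$ in standard form for a vector $v_j$ of maximal norm, where the small range of possibilities for $a_{j-1},a_j,a_{j+1}$ forced by admissibility and norm-maximality can be enumerated directly. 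Iteration terminates at $V'$ with all $\|v_i\|=1$; since consecutive unit vectors in $\mathbb{Z}^2$ must be orthogonal in order to form a basis, the cyclic ordering forces $V'$ to be minimal. The graph statement then follows from the step-by-step bijective correspondence of Lemma \ref{tl21}.

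For the geometric statement, the integrability hypothesis ensures that an equivariant neighborhood of any fixed point $p$ with weights $\{w_1,w_2\}$ is biholomorphic to a neighborhood of the origin in $\mathbb{C}^2$ with the standard weighted torus action, so the usual equivariant holomorphic blow-up at $p$ is available and produces an almost complex torus manifold whose fixed-point data is modified exactly by the combinatorial blow-up of Definition \ref{td112}. The equivariant blow-down of a $w$-sphere with matching local model is its inverse, and integrability near each $w$-sphere is preserved by both operations. Carrying out the combinatorial sequence geometrically produces $M'$ described by $\Delta'$ and $\Gamma'$.

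The main obstacle is the combinatorial inductive step in the multi-fan setting with winding number $s\ge 2$: the classical reduction for smooth complete fans exploits the convex angular arrangement of vectors making a single turn around the origin, which fails when $s\ge 2$. One must instead work with the local $SL(2,\mathbb{Z})$ analysis near an extremal vector and verify, through the global invariant $\sum a_i+3|V|=12s$, that no configuration with all $a_i=0$ but non-unit vectors remains stuck above minimality—a case analogous to the elementary transformation between Hirzebruch surfaces, which requires a specific blow-up followed by a blow-down to bypass.
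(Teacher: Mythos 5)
Your high-level strategy (analyze the configuration near a vector of maximal norm and reduce by local blow-up/blow-down moves) is the same as the paper's, and the geometric half of your argument is fine, but the combinatorial core is not actually carried out, and the specific induction you propose breaks on the very first example. The paper's decreasing quantity is $\max_i|v_i|$, with ties broken by the number of vectors attaining the maximum; it is \emph{not} any lexicographic combination beginning with $|V|$ or $\sum_i|v_i|$. For the fan $\{(1,0),(0,1),(-1,-1)\}$ of $\mathbb{CP}^2$ (all coefficients $a_i=+1$), reaching the minimal fan $\{(1,0),(0,1),(-1,0),(0,-1)\}$ requires \emph{two} blow-ups followed by one blow-down, and this increases both $|V|$ (from $3$ to $4$) and $\sum_i|v_i|$ (from $2+\sqrt2$ to $4$). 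Your scheme of ``one preparatory blow-up, then blow down a newly created $-1$-coefficient'' fails here outright: since no $a_i$ equals $0$, the only $-1$ produced by a single blow-up sits at the inserted vector, and blowing it down merely undoes the blow-up, so the procedure cycles.

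The two facts you would need, and do not supply, are exactly the content of the paper's proof. First, if $(p_1,p_2)$ is a $w$-sphere with $|w|$ maximal and the adjacent labels are $w_1,w_2$, then $|w_1|,|w_2|<|w|$, and Lemma \ref{tl23} (which gives $-w_1=w_2+aw$) forces the self-intersection number $a$ into $\{-1,0,1\}$; each value gets its own explicit move (direct blow-down for $a=-1$; one blow-up then one blow-down for $a=0$; two blow-ups then one blow-down for $a=+1$), and in every case the maximal norm, or the number of edges attaining it, strictly drops. Second, in the case $a=0$ one has $w_2=-w_1$ and must know that at least one of $|w-w_1|$, $|w+w_1|$ is strictly less than $|w|$; this is Lemma \ref{tl51}, proved by an angle/parallelogram argument, and it is precisely what rules out the ``stuck'' Hirzebruch-type configuration you flag as an obstacle. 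Until these two facts are proved and the termination measure is corrected, the induction does not close; they are the substance of the theorem, not routine details.
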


Since blow up and blow down are reverse operations of each other, Theorem \ref{tt114} implies that if $\Delta$ ($\Gamma$) is a family of multi-fans (a graph) describing a 4-dimensional almost complex torus manifold, then $\Delta$ ($\Gamma$) is obtained from some family of minimal multi-fans (some minimal graph) by blow up and down.

By Theorem \ref{tt114}, for an equivariant classification of 4-dimensional almost complex torus manifolds, what remains is to classify minimal manifolds, those that are described by families of minimal multi-fans, alternatively, by minimal graphs. 

Moreover, for any family of multi-fans (any graph) obtained from a family of minimal multi-fans (a minimal graph), there exists a manifold described by each of them.

\begin{theorem} \label{tt114a}
Let $\Delta$ ($\Gamma$) be a family of multi-fans (a graph) obtained from a finite family of minimal multi-fans (a minimal 2-graph) by blow up and blow down. Then there exists a 4-dimensional almost complex torus manifold described by $\Delta$ ($\Gamma$).
\end{theorem}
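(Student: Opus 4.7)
The plan is to proceed by induction on the length of the sequence of combinatorial blow ups and blow downs that produces $\Delta$ (equivalently $\Gamma$) from the minimal family $\Delta_0$ (equivalently the minimal graph $\Gamma_0$). The base of the induction is to exhibit a 4-dimensional almost complex torus manifold $M_0$ described by $\Delta_0$. The inductive step is to translate each combinatorial blow up or blow down into the corresponding equivariant geometric operation on an almost complex torus manifold, invoking the correspondence recorded in Proposition~\ref{tp43}.

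For the base case I would realize $\Delta_0$ out of copies of $\mathbb{CP}^1\times\mathbb{CP}^1$. A single minimal multi-fan with $4$ vectors is described by $\mathbb{CP}^1\times\mathbb{CP}^1$ with the standard diagonal $T^2$-action, where the sign choice $a=\pm1$ is achieved by reversing the complex structure on one factor. For a single minimal multi-fan with $4s$ vectors and $s\geq 2$, I would take $s$ copies of $\mathbb{CP}^1\times\mathbb{CP}^1$ and form their cyclic equivariant fiber sum along isotropy 2-spheres of matching weight: one excises an equivariant tubular neighborhood of such a sphere from each copy and glues the boundaries via an orientation-reversing equivariant diffeomorphism, producing a connected 4-manifold carrying a $T^2$-action whose fixed set is a single cycle of $4s$ points, and whose associated multi-fan is exactly the prescribed one. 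The standard $T^2$-invariant almost complex structures on the summands can be interpolated across the gluing collars, since each gluing takes place along a $T^2$-invariant $S^1$-bundle over a $T^2$-invariant $\mathbb{CP}^1$. When the family contains several minimal multi-fans I would first build each summand separately and then join them into a single connected manifold by equivariant connect sum at an auxiliary pair of fixed points; the auxiliary fixed points thus introduced can be absorbed at the combinatorial level by prepending to the given sequence a pair of canceling blow up/blow down moves, which does not change $\Delta$.

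With $M_0$ in hand, the inductive step uses Proposition~\ref{tp43}: given an almost complex torus manifold $M_i$ described by $\Delta_i$ with the almost complex structure locally integrable near the relevant fixed point or $w$-sphere, an equivariant blow up (respectively blow down) yields an almost complex torus manifold $M_{i+1}$ whose describing family is the combinatorial blow up (respectively blow down) of $\Delta_i$. The base model $M_0$ is built from genuine complex pieces glued along $T^2$-invariant complex submanifolds, so its almost complex structure is locally integrable everywhere; this property is preserved by equivariant blow up and blow down, and so the induction propagates. After finitely many steps one obtains a 4-dimensional almost complex torus manifold described by $\Delta$, and since $\Delta$ and $\Gamma$ encode the same fixed-point data (via Lemma~\ref{tl21} and the discussion surrounding Definition~\ref{td13}), the same manifold is described by $\Gamma$.

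The main obstacle is the base case when a component of the minimal family has winding number $s\geq 2$: no smooth toric variety realizes such a multi-fan, so one must produce a non-toric model. The iterated equivariant fiber sum above is the natural candidate, but one must verify carefully that (i) the gluing can be performed so that the fixed-point set consists of exactly the prescribed $4s$ isolated points arranged in one cycle of isotropy spheres, (ii) the $T^2$-invariant almost complex structures on the summands extend over the gluing region to a globally defined, $T^2$-invariant almost complex structure, and (iii) the weights at each resulting fixed point coincide with those prescribed by the minimal multi-fan. Handling these three checks, together with the connectedness issue for families containing several minimal multi-fans, is where the technical content of the proof concentrates.
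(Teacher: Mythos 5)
Your overall strategy (induct on the sequence of combinatorial moves and realize each one geometrically) is much heavier than what is needed, and its base case contains a genuine error. The paper's proof is essentially two lines: a minimal multi-fan is admissible, the combinatorial blow up and blow down operations preserve admissibility, and Theorem \ref{tt33} (which rests on the equivariant plumbing construction of Theorem \ref{tt31} together with the connected sum along free orbits of Lemma \ref{tl32}) already produces a 4-dimensional almost complex torus manifold described by \emph{any} finite family of admissible multi-fans. So one applies Theorem \ref{tt33} directly to the final family $\Delta$; no induction, no geometric blow ups or blow downs, and no integrability hypotheses are required. The graph statement then follows from Lemma \ref{tl21}.

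The concrete gap in your argument is the base case for a minimal multi-fan with $4s$ vectors, $s\geq 2$. An equivariant fiber sum along isotropy $2$-spheres excises a tubular neighborhood of each chosen sphere, and with it the two fixed points lying on that sphere; starting from $s$ copies of $\mathbb{CP}^1\times\mathbb{CP}^1$ ($4s$ fixed points in total) and performing $s$ cyclic gluings removes all $4s$ of them, so the result cannot have fixed point set ``a single cycle of $4s$ points'' as you claim. (For instance, the fiber sum of two copies of $\mathbb{CP}^1\times\mathbb{CP}^1$ along horizontal spheres of self-intersection $0$ is again $\mathbb{CP}^1\times\mathbb{CP}^1$, with $4$ fixed points rather than $8$.) The correct local-to-global construction is equivariant \emph{plumbing}: one takes the disk bundles $D_{a_i}(v_i,-v_{i-1})$ and identifies neighborhoods of \emph{fixed points}, interchanging base and fiber directions, then caps the resulting boundary with $D^2\times T^2$; this is Theorem \ref{tt31}, and it retains all $4s$ fixed points with the prescribed weights. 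Separately, your inductive step silently assumes that the almost complex structure on each intermediate manifold is integrable near the fixed point or sphere being modified, which Proposition \ref{tp43} requires; interpolating invariant almost complex structures across gluing collars does not in general preserve integrability, so this would need an argument --- and it is exactly the hypothesis the paper's proof avoids by never performing a geometric blow up or blow down at all.
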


We discuss in the next subsection that if $M$ is complex, then a family of multi-fans describing $M$ is a fan, and both the fan and a graph describing $M$ each determine the manifold.

\subsection{Applications to complex manifolds} \label{s1.4}

In this subsection, we discuss applications of our results to complex manifolds. In algebraic geometry, a minimal surface is one that cannot be blown down further. When a given 4-dimensional almost complex torus manifold is a complex manifold, we can blow up and down it to a minimal manifold in our sense, which is $\mathbb{CP}^1 \times \mathbb{CP}^1$.

\begin{theo} \label{tt115}
Let $M$ be a 4-dimensional complex torus manifold. Then we can equivariantly blow up and down $M$ to $\mathbb{CP}^1 \times \mathbb{CP}^1$ with a $T^2$-action
\begin{center}
$(t_1,t_2) \cdot ([z_0:z_1],[y_0:y_1])=([z_0: t_1 z_1],[y_0:t_2 y_1])$ 
\end{center}
for all $(t_1,t_2) \in T^2 \subset \mathbb{C}^2$ and $([z_0:z_1].[y_0:y_1]) \in \mathbb{CP}^1 \times \mathbb{CP}^1$. \end{theo}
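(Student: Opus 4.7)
The plan is to apply Theorem \ref{tt114} to reduce to a minimal complex manifold $M'$, and then to observe that the combinatorial constraints of \emph{simultaneously} being minimal and having winding number one force the fan of $M'$ to be the standard fan of $\mathbb{CP}^1 \times \mathbb{CP}^1$; Corollary \ref{tc117} then identifies $M'$.

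Since $M$ is complex, its almost complex structure is integrable everywhere, so the second hypothesis of Theorem \ref{tt114} is automatic. Applying that theorem yields an equivariant sequence of blow ups and blow downs connecting $M$ to a 4-dimensional almost complex torus manifold $M'$ all of whose weights at fixed points are unit vectors in $\mathbb{Z}^2$. The standard fact that equivariant blow up at a torus fixed point of a complex torus manifold is again a complex torus manifold (it replaces the fixed point by an equivariant $\mathbb{CP}^1$), together with the fact that blow down is its inverse, ensures $M'$ is genuinely complex.

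Let $\Delta'$ be a family of multi-fans describing $M'$. Since $M'$ is complex, the remark at the end of Subsection \ref{s1.3} tells us that $\Delta'$ reduces to a single fan $V'$ with $T(V')=1$. Minimality (from all weights being units) together with Definition \ref{td110} forces $V' = \{v_1,\ldots,v_{4s}\}$ with successive blocks
\begin{equation*}
v_{4j+1}=(1,0),\quad v_{4j+2}=(0,a),\quad v_{4j+3}=(-1,0),\quad v_{4j+4}=(0,-a),
\end{equation*}
for a fixed $a \in \{-1,1\}$ and $0 \le j \le s-1$. Each such block makes one full revolution around the origin, so $T(V') = s$; combined with $T(V')=1$ this yields $s=1$ and $V'=\{(1,0),(0,a),(-1,0),(0,-a)\}$. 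A direct check (cf.\ Section \ref{s2}) identifies this as the fan of $\mathbb{CP}^1 \times \mathbb{CP}^1$ with the $T^2$-action given in the theorem statement. Corollary \ref{tc117} now yields an equivariant biholomorphism $M' \cong \mathbb{CP}^1 \times \mathbb{CP}^1$, completing the proof.

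The main obstacle I anticipate is not the combinatorial reduction, which is largely book-keeping, but rigorously justifying that the entire chain of blow ups and blow downs produced by Theorem \ref{tt114} stays inside the complex category and that the fan description (in particular, winding number one) is preserved at every step. Once these two facts are in hand, the conclusion is immediate from combining Theorem \ref{tt114}, Corollary \ref{tc117}, and the explicit structure of minimal fans.
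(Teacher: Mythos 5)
Your reduction to a single minimal fan is exactly the paper's route: Theorem \ref{tt114} produces a minimal $M'$ described by a family $\Delta'$ of minimal multi-fans, the Todd genus of a complex torus manifold is $1$ (Lemma \ref{todd}), and Proposition \ref{tp63} (Todd genus equals $\sum_j T(V_j)$) then forces $\Delta'$ to be a single minimal fan, necessarily $\{(1,0),(0,a),(-1,0),(0,-a)\}$ up to cyclic shift. Up to that point your argument is sound.

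The gap is in the final identification step. Knowing that the fan of $M'$ is the standard fan of $\mathbb{CP}^1\times\mathbb{CP}^1$ does not by itself tell you that $M'$ \emph{is} $\mathbb{CP}^1\times\mathbb{CP}^1$; you need a rigidity statement saying that an admissible fan determines the complex torus manifold. You invoke Corollary \ref{tc117} for this, but in the paper that corollary is \emph{deduced from} Theorem \ref{tt115}: its proof blows both manifolds down to $\mathbb{CP}^1\times\mathbb{CP}^1$ using Theorem \ref{tt115}. So your argument is circular at this point, and the "direct check (cf.\ Section \ref{s2})" only identifies the combinatorial fan, not the manifold. The paper closes this gap with Proposition \ref{tp61} (and the unnumbered proposition following it): it takes a generic subcircle, applies the Carrell--Howard--Kosniowski classification of complex surfaces with a holomorphic circle action with discrete fixed points to conclude that $M'$ is a Hirzebruch surface $\Sigma_n$, and then matches the weight data at the four fixed points against the list in \cite{CHK} to force $n=0$, i.e.\ $M'\cong\Sigma_0=\mathbb{CP}^1\times\mathbb{CP}^1$; a local patching argument then upgrades this to an equivariant biholomorphism with the stated action. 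Some input of this external, non-combinatorial kind is genuinely needed to finish the proof, and your proposal does not supply it.
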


Theorem \ref{tt115} implies the following corollary.

\begin{cor} \label{tc116}
Any two 4-dimensional complex torus manifolds can be obtained from each other by equivariant blow up and blow down. 
\end{cor}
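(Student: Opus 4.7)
The plan is to derive this as an almost immediate consequence of Theorem \ref{tt115}, using the fact that equivariant blow up and equivariant blow down are reverse operations of each other. The key observation is that Theorem \ref{tt115} sends every 4-dimensional complex torus manifold to the \emph{same} target, namely $\mathbb{CP}^1 \times \mathbb{CP}^1$ equipped with the specific standard $T^2$-action displayed in the statement, which acts as a universal ``hub'' through which any two such manifolds can be routed.

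In detail, let $M_1$ and $M_2$ be two 4-dimensional complex torus manifolds. First I would apply Theorem \ref{tt115} to $M_1$ to obtain a finite sequence of equivariant blow ups and blow downs
\[
M_1 = N_0 \rightsquigarrow N_1 \rightsquigarrow \cdots \rightsquigarrow N_r = \mathbb{CP}^1 \times \mathbb{CP}^1
\]
(with the stated standard action on the target). Second, I would apply Theorem \ref{tt115} to $M_2$ to obtain another such sequence
\[
M_2 = N_0' \rightsquigarrow N_1' \rightsquigarrow \cdots \rightsquigarrow N_s' = \mathbb{CP}^1 \times \mathbb{CP}^1,
\]
landing at the same target with the same $T^2$-action. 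Because equivariant blow up and equivariant blow down are inverse operations of each other, reversing the second sequence produces a valid sequence of blow ups and downs
\[
\mathbb{CP}^1 \times \mathbb{CP}^1 = N_s' \rightsquigarrow N_{s-1}' \rightsquigarrow \cdots \rightsquigarrow N_0' = M_2.
\]
Concatenating the first sequence with the reversed second sequence then gives a single finite sequence of equivariant blow ups and blow downs from $M_1$ to $M_2$, which is the assertion.

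There is essentially no obstacle beyond invoking Theorem \ref{tt115}; the only point that requires mild care is to verify that the $T^2$-action on the intermediate $\mathbb{CP}^1 \times \mathbb{CP}^1$ coming from $M_1$ matches the one coming from $M_2$, so that the two routes can actually be glued. This is automatic because Theorem \ref{tt115} specifies the target action uniquely as the standard one. Thus the corollary reduces to a short bookkeeping argument concatenating two sequences provided by Theorem \ref{tt115}.
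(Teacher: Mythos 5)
Your proposal is correct and is exactly the argument the paper intends: the corollary is stated as an immediate consequence of Theorem \ref{tt115}, obtained by routing both manifolds through the common target $\mathbb{CP}^1 \times \mathbb{CP}^1$ with the standard action and reversing one of the two sequences. The paper gives no further proof, so your bookkeeping argument matches it.
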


Analogous to a correspondence between toric varieties and fans, we establish one between 4-dimensional complex torus manifolds and admissible fans.

\begin{cor} \label{tc117}
An admissible fan determines a 4-dimensional complex torus manifold up to equivariant biholomorphism. In other words, two 4-dimensional complex torus manifolds are equivariantly biholomorphic if and only if their fans agree.
\end{cor}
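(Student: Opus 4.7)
The forward direction is straightforward: any equivariant biholomorphism $\phi\colon M_1 \to M_2$ carries fixed points to fixed points preserving their weights, and sends each $w$-sphere of $M_1$ to a $w$-sphere of $M_2$, so the ordered list of vectors defining the fan is preserved.

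For the converse, write $V$ for the common fan and set $V_0 = \{(1,0),(0,1),(-1,0),(0,-1)\}$, the standard fan of $\mathbb{CP}^1 \times \mathbb{CP}^1$. My strategy is to combine Theorem \ref{tt115} with the canonicity of equivariant blow up and blow down for complex manifolds. Applying Theorem \ref{tt115} to $M_1$ yields a sequence $\sigma$ of equivariant blow ups and blow downs from $M_1$ to $\mathbb{CP}^1 \times \mathbb{CP}^1$; by Proposition \ref{tp43}, this induces a matching sequence of operations on fans (in the sense of Definition \ref{td112}) taking $V$ to $V_0$. I would then apply this same sequence of fan operations starting from $V$ viewed as the fan of $M_2$. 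At each step the fan operation dictates exactly which fixed point to equivariantly blow up or which isotropy $(-1)$-sphere to equivariantly blow down on the current manifold; in the holomorphic category both operations are canonical (projectivization of the tangent space at a fixed point, respectively Castelnuovo contraction of a $(-1)$-curve), so the resulting manifold $\widetilde{M}_2$ with fan $V_0$ is well-defined up to equivariant biholomorphism.

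The heart of the matter is then the base case: every $4$-dimensional complex torus manifold $M$ with fan $V_0$ is equivariantly biholomorphic to $\mathbb{CP}^1 \times \mathbb{CP}^1$ with the standard action. Granting this, $\widetilde{M}_2 \cong \mathbb{CP}^1 \times \mathbb{CP}^1$, and reversing $\sigma$ on both copies recovers $M_1$ and $M_2$ by the same canonical sequence, so $M_1 \cong M_2$. To prove the base case, I would note that the admissibility relation $v_{i-1} + v_{i+1} = a_i v_i$ yields $a_i = 0$ for every $i$, whence each isotropy $\mathbb{CP}^1$ in $M$ has trivial normal bundle. Equivariant linearization at a fixed point identifies local charts with the standard $T^2$-model on $\mathbb{C}^2$; a $T^2$-equivariant tubular neighborhood theorem around the $(1,0)$-sphere extends this to a global holomorphic $\mathbb{CP}^1$-bundle structure on $M$ over $\mathbb{CP}^1$, and the $(0,1)$-sphere furnishes a $T^2$-equivariant holomorphic section of self-intersection zero, forcing the bundle to be equivariantly trivial, i.e., $M \cong \mathbb{CP}^1 \times \mathbb{CP}^1$ equivariantly.

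The main obstacle I anticipate is the base case: establishing the global equivariant triviality of the $\mathbb{CP}^1$-bundle requires tracking the prescribed weights at all four fixed points to ensure that the resulting biholomorphism actually intertwines the two $T^2$-actions, not merely up to an automorphism of $\mathbb{CP}^1 \times \mathbb{CP}^1$. A secondary technical point is confirming that the fan-to-manifold dictionary of Proposition \ref{tp43} is compatible with reversing sequences of blow ups and blow downs; this follows because each blow up (resp.\ blow down) is uniquely determined by the fixed point (resp.\ $(-1)$-sphere) it involves, both of which are encoded in the fan.
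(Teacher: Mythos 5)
Your argument is correct and follows essentially the same route as the paper's proof of Corollary \ref{tc117}: the forward direction is identical, and for the converse the paper likewise runs the blow-up/blow-down sequence of Theorem \ref{tt115}, dictated by the common fan and made canonical by the locality of holomorphic blow up and blow down (Proposition \ref{tp43}), in parallel on $M_1$ and $M_2$, reducing both to $\mathbb{CP}^1\times\mathbb{CP}^1$ and transporting the identification back through the sequence. The only point of divergence is your treatment of the base case: the paper already proves that a complex torus manifold described by the minimal fan is \emph{equivariantly} biholomorphic to $\mathbb{CP}^1\times\mathbb{CP}^1$ (Proposition \ref{tp61}, via the Carrell--Howard--Kosniowski classification of complex surfaces with holomorphic circle actions, followed by the covering-by-linearized-charts argument in the subsequent proposition), so you should cite that rather than rely on your sketched claim that an equivariant tubular neighborhood of the $(1,0)$-sphere extends to a global holomorphic $\mathbb{CP}^1$-bundle structure on all of $M$ --- that extension step is the one assertion in your write-up that is not justified as written.
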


Since any admissible fan can be obtained from a minimal fan by blow up and blow down, we can rephrase Corollary \ref{tc117} as follows.

\begin{cor} \label{tc118}
Four dimensional complex torus manifolds are classified by blow up and blow down of the minimal fan $\{(1,0), (0,1), (-1,0), (0,-1)\}$.
\end{cor}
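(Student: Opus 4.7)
The plan is to derive the corollary by combining the two preceding results in this subsection. The statement asserts two things packaged into the word ``classified'': (i) every 4-dimensional complex torus manifold corresponds to a fan obtainable from the minimal fan $\{(1,0),(0,1),(-1,0),(0,-1)\}$ by a sequence of blow ups and blow downs, and (ii) two such manifolds are equivariantly biholomorphic if and only if their fans agree. Part (ii) is exactly Corollary \ref{tc117}, which I would cite directly. So the real content is (i), namely identifying the minimal fan for a fan (rather than a general multi-fan) and pushing the manifold-level reduction of Theorem \ref{tt115} down to the fan level via Corollary \ref{tc117}.

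First I would verify that the only minimal admissible fan is $\{(1,0),(0,1),(-1,0),(0,-1)\}$. By Definition \ref{td110} and the description that follows it, a minimal multi-fan consists of cyclically repeated blocks $(1,0),(0,a),(-1,0),(0,-a)$ with $a \in \{-1,1\}$. For this multi-fan to be a fan, Definition \ref{td11} requires winding number $T(V)=1$, which forces exactly one such block and the counterclockwise choice $a=1$. This is precisely the fan of $\mathbb{CP}^1\times\mathbb{CP}^1$ under the standard $T^2$-action given in Theorem \ref{tt115}; an admissible fan describes it in the sense of Definition \ref{td13} by direct inspection of the four fixed points and the four coordinate spheres.

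Next, given any 4-dimensional complex torus manifold $M$ with associated admissible fan $\Delta_M$, I would invoke Theorem \ref{tt115} to equivariantly blow up and down $M$ to $\mathbb{CP}^1\times\mathbb{CP}^1$. Since $M$ is complex, the almost complex structure is integrable near every $w$-sphere, so the correspondence between manifold-level blow up/down and fan-level blow up/down (Theorem \ref{tt114} together with Proposition \ref{tp43}) applies. This translates the manifold-level sequence of Theorem \ref{tt115} into a sequence of fan blow ups and blow downs carrying $\Delta_M$ to $\{(1,0),(0,1),(-1,0),(0,-1)\}$. Because blow up and blow down are mutually inverse, reversing this sequence exhibits $\Delta_M$ as obtained from the minimal fan by successive blow ups and downs, proving (i).

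Finally I would combine this with Corollary \ref{tc117}: since $\Delta_M$ determines $M$ up to equivariant biholomorphism and conversely, the procedure of starting from $\{(1,0),(0,1),(-1,0),(0,-1)\}$ and performing all possible sequences of admissible blow ups and blow downs enumerates exactly the equivariant biholomorphism classes of 4-dimensional complex torus manifolds. The only potential obstacle I anticipate is the bookkeeping that each intermediate fan produced from a fan by an elementary blow up or blow down remains a fan (equivalently, has winding number $1$) and remains admissible, but this is immediate from Definition \ref{td112} together with the remark after it, so no serious difficulty arises.
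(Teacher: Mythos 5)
Your proposal is correct and follows essentially the same route as the paper, which presents Corollary \ref{tc118} as a rephrasing of Corollary \ref{tc117} using the fact that any admissible fan is obtained from the minimal fan by blow up and blow down (via Theorem \ref{tt114}, Theorem \ref{tt115}, and Lemma \ref{connected}). Your extra care in checking that the unique minimal \emph{fan} (winding number one) is $\{(1,0),(0,1),(-1,0),(0,-1)\}$ is a reasonable elaboration of what the paper leaves implicit, not a different argument.
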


Note that while we obtained these results as applications of the results for almost complex torus manifolds to complex torus manifolds, one may directly prove by combining the work of \cite{IK} and \cite{T}.

A (non-singular) rational surface is a complex surface that is obtained from a minimal rational surface by blow ups. The minimal rational surfaces are the complex projective space $\mathbb{CP}^2$ and the Hirzebruch surfaces. For a complex manifold, our blow up and blow down for an almost complex manifold coincide with the blow up and blow down in complex geometry. Thus, Corollary \ref{tc116} also implies the following well-known fact.

\begin{cor} \label{tc119}
Any two rational surfaces can be obtained from each other by blow up and blow down.
\end{cor}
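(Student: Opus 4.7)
The plan is to deduce this from Corollary \ref{tc116} together with the definition of a rational surface and the observation (noted in the remark preceding the corollary) that for a complex manifold our notion of blow up and blow down coincides with the classical one in complex geometry.

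Let $X$ and $Y$ be two rational surfaces. By definition, there exist minimal rational surfaces $M$ and $M'$---each either $\mathbb{CP}^2$ or a Hirzebruch surface $F_n$---such that $X$ is obtained from $M$ by a finite sequence of classical blow ups and $Y$ is obtained from $M'$ by a finite sequence of classical blow ups. Reversing the first sequence exhibits $M$ as obtained from $X$ by a finite sequence of blow downs, while the second sequence exhibits $Y$ as obtained from $M'$ by a finite sequence of blow ups.

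The key point is that both $\mathbb{CP}^2$ and every Hirzebruch surface $F_n$ carry natural effective $T^2$-actions with fixed points, making them 4-dimensional complex torus manifolds in the sense of this paper. Hence $M$ and $M'$ both fall within the hypothesis of Corollary \ref{tc116}, which provides a finite sequence of equivariant blow ups and blow downs connecting $M$ to $M'$. By the remark immediately preceding the statement we are proving, each such equivariant operation is a classical complex blow up or blow down (performed at a fixed point of the torus action). Concatenating the three segments---blow downs from $X$ to $M$, the sequence from $M$ to $M'$ provided by Corollary \ref{tc116}, and blow ups from $M'$ to $Y$---produces a chain of classical blow ups and blow downs connecting $X$ and $Y$.

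There is no genuine obstacle here once Corollary \ref{tc116} is in hand; the argument is essentially bookkeeping. The only delicate point is the matching of equivariant with classical blow ups and blow downs, which is settled by the cited remark: in the equivariant sequence between $M$ and $M'$, every operation takes place at a torus-fixed point of a complex manifold, and under this identification it is literally a classical blow up or blow down in the sense of complex geometry. Consequently the overall composite is a legitimate sequence of classical blow ups and blow downs, proving the corollary.
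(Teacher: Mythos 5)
Your proposal is correct and follows essentially the same route as the paper: identify the minimal rational surfaces $\mathbb{CP}^2$ and the Hirzebruch surfaces as 4-dimensional complex torus manifolds, apply Corollary \ref{tc116} to connect them, and use the remark that equivariant blow up and blow down of a complex manifold coincide with the classical operations to concatenate with the blow-up sequences defining $X$ and $Y$. The paper leaves this as an immediate consequence of the preceding remark, and your write-up just makes the bookkeeping explicit.
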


Let $M$ be a 4-dimensional complex torus manifold. According to Theorem \ref{tt19}, there exists an admissible graph $\Gamma=(V,E)$ describing $M$. By Lemma \ref{connected}, $\Gamma$ is connected. Theorem \ref{tt115} implies that the odd degree cohomology groups of $M$ vanish, and thus the $T^2$-action on $M$ is equivariantly formal. Since the weights at each fixed point are linearly independent (these form a basis of $\mathbb{Z}^2$), the graph $\Gamma$ is a GKM graph if we forget the direction of each edge \cite[Corollary 2.4]{J6}. Then, by the GKM theory \cite{GKM}, the graph $\Gamma$ encodes the equivariant cohomology $H_{T^2}^*(M;\mathbb{Q})$ of $M$ as follows.
\begin{center}
$\displaystyle H_{T^2}^*(M;\mathbb{Q}) \simeq \{(f_{i(e)}) \in \bigoplus_{i(e) \in M^{T^2}} H^*(BT^2;\mathbb{Q}) : f_{i(e)}-f_{t(e)} \in (w(e)), \forall e \in E\}$.
\end{center}
Here, for each edge $e \in E$, $(w(e))$ is the ideal generated by the weight $w(e) \in H^2(BT^2;\mathbb{Q})$ and $BT^2$ is the classifying space of $T^2$.

\begin{cor} \label{c120}
Let $M$ be a 4-dimensional complex torus manifold. Then there exists a connected admissible graph describing $M$, which encodes the equivariant cohomology of $M$.
\end{cor}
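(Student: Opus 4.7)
The plan is to assemble the corollary from four ingredients already established in the paper: the existence of a describing graph, its connectedness, equivariant formality of the action, and the GKM hypothesis; then invoke the Goresky--Kottwitz--MacPherson theorem to package the result.

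First, I would invoke Theorem \ref{tt19} to produce an admissible labeled directed 2-graph $\Gamma = (V,E)$ that describes $M$, and apply Lemma \ref{connected} to conclude that $\Gamma$ is connected. This disposes of the first clause of the corollary.

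Next, I would establish that the $T^2$-action on $M$ is equivariantly formal. By Theorem \ref{tt115}, $M$ is connected to $\mathbb{CP}^1 \times \mathbb{CP}^1$ by a finite chain of equivariant blow ups and blow downs. Since $\mathbb{CP}^1 \times \mathbb{CP}^1$ has cohomology concentrated in even degrees, and a complex blow up at a point alters rational cohomology only by adding an even-degree generator (with blow down performing the inverse operation), induction on the length of this chain gives $H^{\mathrm{odd}}(M;\mathbb{Q}) = 0$. Together with the existence of fixed points, this yields equivariant formality of the $T^2$-action.

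Finally, the admissibility of $\Gamma$ (Definition \ref{td17}(1)) forces the two weights at each fixed point to form a $\mathbb{Z}$-basis of $\mathbb{Z}^2$, hence in particular to be pairwise linearly independent over $\mathbb{Q}$. The hypotheses of \cite[Corollary 2.4]{J6} are therefore satisfied, so the undirected graph underlying $\Gamma$ is a GKM graph. Applying the main theorem of \cite{GKM} to this equivariantly formal GKM action then produces the displayed isomorphism for $H^*_{T^2}(M;\mathbb{Q})$ in terms of edge-compatibility conditions. I do not anticipate a genuine obstacle here; the only step that merits an explicit line or two of argument is tracking that neither complex blow up nor blow down introduces odd-degree cohomology, which is a standard computation on the Mayer--Vietoris level.
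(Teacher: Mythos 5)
Your proposal is correct and follows essentially the same route as the paper: Theorem \ref{tt19} plus Lemma \ref{connected} for the connected admissible graph, Theorem \ref{tt115} to deduce vanishing odd cohomology and hence equivariant formality, linear independence of the weights together with \cite[Corollary 2.4]{J6} to get the GKM property, and then the GKM theorem of \cite{GKM}. The only difference is that you spell out why the blow-up/blow-down chain kills odd cohomology, a step the paper simply asserts.
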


\subsection{Organization} \label{s1.5} We will now outline the structure of this paper.
\begin{enumerate}[(1)]
\item In section \ref{s2}, we present basic examples of 4-dimensional (almost) complex torus manifolds, along with their corresponding (families of multi-)fans and graphs. Specifically, we discuss the complex projective space $\mathbb{CP}^2$ and the Hirzebruch surfaces. 
\item Section \ref{s3} introduces the construction of a 4-dimensional almost complex torus manifold using equivariant plumbing, starting from a family of multi-fans.
\item In Section \ref{s5.1}, we establish a correspondence between families of admissible multi-fans and admissible graphs (Lemma \ref{tl21}). We also prove the existence of a family of multi-fans and a graph describing any 4-dimensional almost complex torus manifold (Proposition \ref{tp24}). Finally, we provide a proof of Theorem \ref{tt19}.
\item Section \ref{s5.2} explores the topic of equivariant blow up and blow down of a 4-dimensional almost complex torus manifold. It demonstrates that blow up and blow down operations for the manifold, its corresponding family of multi-fans, and its graph are all interrelated (Proposition \ref{tp43}).
\item In Section \ref{s5.3}, we prove Theorem \ref{tt114} and Theorem \ref{tt114a}.
\item Section \ref{s5.4} delves into the study of 4-dimensional complex torus manifolds. We present the proof of Theorem \ref{tt115} and Corollary \ref{tc117}.
\item In Section \ref{s5.5}, we investigate various properties of 4-dimensional almost complex torus manifolds. This includes establishing a lower bound on the number of fixed points for such manifolds (Proposition \ref{tp75}). Additionally, we explore the relationship between the Hirzebruch $\chi_y$-genus and blow up and down operations (Lemma \ref{tl77}). Finally, we provide the necessary and sufficient conditions for both the Hirzebruch $\chi_y$-genus (Theorem \ref{tt78}) and the Chern numbers (Theorem \ref{tt79}).
\item In Section \ref{s5.6}, we classify 4-dimensional almost complex torus manifolds with few fixed points.
\end{enumerate}

\section{Examples} \label{s2}

In this section, we present basic examples of 4-dimensional (almost) complex torus manifolds, the complex projective space $\mathbb{CP}^2$ and the Hirzebruch surfaces, along with their fans and graphs.

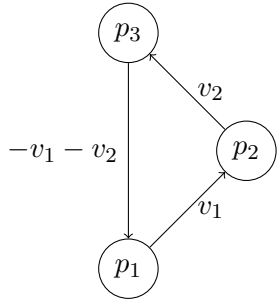
\begin{figure}
\begin{tikzpicture}[state/.style ={circle, draw}]
\node[state] (a) {$p_1$};
\node[state] (b) [above right=of a] {$p_2$};
\node[state] (c) [above left=of b] {$p_3$};
\path (a) [->] edge node [right] {$v_1$} (b);
\path (b) [->] edge node [right] {$v_2$} (c);
\path (c) [->] edge node [left] {$-v_1-v_2$} (a);
\end{tikzpicture}
\caption{Graph describing $\mathbb{CP}^2$}\label{tfig14}
\end{figure}

\begin{exa} \label{te3}
Let $v_1$ and $v_2$ be a basis of $\mathbb{Z}^2$. Let a torus $T^2$ act on $\mathbb{CP}^2$ by
\begin{center}
$g \cdot [z_0:z_1:z_2]=[z_0:g^{v_1}z_1:g^{v_1+v_2}z_2]$
\end{center}
for all $g \in T^2 \subset \mathbb{C}^2$ and for all $[z_0:z_1:z_2] \in \mathbb{CP}^2$. There are 3 fixed points $p_1=[1:0:0]$, $p_2=[0:1:0]$, and $p_3=[0:0:1]$, and the weights at these fixed points are $\{v_1,v_1+v_2\}$, $\{-v_1,v_2\}$, and $\{-v_2,-v_1-v_2\}$, respectively. 

The fixed points $p_1$ and $p_2$ lie in the 2-sphere $\{[z_0:z_1:0]\}$ upon which $T^2$ acts by $g \cdot [z_0:z_1:0]=[z_0:g^{v_1}:0]$ giving $p_1$ weight $v_1$ and $p_2$ weight $-v_1$; thus $(p_1,p_2)$ is $v_1$-sphere. Similarly, $p_1$ and $p_3$ are in the 2-sphere $\{[z_0:0:z_2]\}$ upon which $T^2$ acts by $g \cdot [z_0:0:z_2]=[z_0:0:g^{v_1+v_2}]$ and hence $(p_1,p_3)$ is $(v_1+v_2)$-sphere; in other words, $(p_3,p_1)$ is $-(v_1+v_2)$-sphere. Also, $p_2$ and $p_3$ are in the 2-sphere $\{[0:z_1:z_2]\}$ upon which $T^2$ acts by $g \cdot [0:z_1:z_2]=[0:g^{v_1}z_1:g^{v_1+v_2}z_2]$; thus $(p_2,p_3)$ is $v_2$-sphere.

Therefore, the fan $V=\{v_1,v_2,-v_1-v_2\}$ and the graph Figure \ref{tfig14} each describe this action on $\mathbb{CP}^2$.
\end{exa}

\begin{figure}
\begin{tikzpicture}[state/.style ={circle, draw}]
\node[state] (A) {$p_1$};
\node[state] (B) [above left=of A] {$p_4$};
\node[state] (C) [above right=of A] {$p_2$};
\node[state] (D) [above right=of B] {$p_3$};
\path (B) [->] edge node[left] {$-v_2$} (A);
\path (A) [->] edge node [right] {$v_1$} (C);
\path (D) [->] edge node [left] {$-v_1+nv_2$} (B);
\path (C) [->] edge node [right] {$v_2$} (D);
\end{tikzpicture}
\caption{Graph describing Hirzebruch surface $\Sigma_n$}\label{tfig15}
\end{figure}
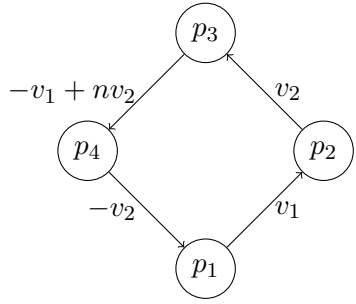

\begin{exa} \label{te4}
Let $n$ be an integer. The Hirzebruch surface $\Sigma_n$ is a complex surface defined by
\begin{center}
$\Sigma_n=\{([z_0:z_1:z_2],[y_1:y_2])\in\mathbb{CP}^2\times\mathbb{CP}^1:z_1 y_2^n=z_2 y_1^n\}$.
\end{center}
Let $v_1$ and $v_2$ form a basis of $\mathbb{Z}^2$. Let a torus $T^2$ act on the Hirzebruch surface  by
\begin{center}
$g \cdot ([z_0:z_1:z_2],[y_1:y_2]) = ([g^{v_1} z_0:z_1:g^{n v_2} z_2],[y_1:g^{v_2} y_2])$
\end{center}
for all $g \in T^2 \subset \mathbb{C}^2$ and for all $([z_0:z_1:z_2],[y_1:y_2]) \in \Sigma_n$. The action has 4 fixed points, $p_1=([0:1:0],[1:0])$, $p_2=([1:0:0],[1:0])$, $p_3=([1:0:0],[0:1])$, $p_4=([0:0:1],[0:1])$ that have weights $\{v_1,v_2\}$, $\{-v_1,v_2\}$, $\{-v_1+nv_2,-v_2\}$, $\{v_1-nv_2,-v_2\}$, respectively. 

The fixed points $p_1$ and $p_2$ lie in the 2-sphere $\{([z_0:z_1:0],[1:0])\}$ upon which $T^2$ acts by $g \cdot ([z_0:z_1:0],[1:0])=([g^{v_1} z_0:z_1:0],[1:0])$; hence $(p_1,p_2)$ is $v_1$-sphere. Similarly, $p_2$ and $p_3$ lie in the 2-sphere $\{([1:0:0],[y_1:y_2])\}$, upon which $T^2$ acts by $g \cdot ([1:0:0],[y_1:y_2])=([1:0:0],[w_1:g^{v_2} w_2])$ and hence $(p_2,p_3)$ is $v_2$-sphere. Also, $p_3$ and $p_4$ lie in the 2-sphere $\{([z_0:0:z_2],[0:1])\}$ upon which $T^2$ acts by $g \cdot ([z_0:0:z_2],[0:1])=([g^{v_1} z_0:0:g^{nv_2} z_2],[0:1])$ and hence $(p_3,p_4)$ is $(-v_1+nv_2)$-sphere. Finally, $p_1$ and $p_4$ lie in the 2-sphere $\{([0:z_1:z_2],[y_1:y_2])\}$ upon which $T^2$ acts by $g \cdot ([0:z_1:z_2],[y_1:y_2])=([0:z_1:g^{nv_2}z_2],[y_1:g^{v_2} y_2])$ and hence $(p_1,p_4)$ is $v_2$-sphere; in other words, $(p_4,p_1)$ is $(-v_2)$-sphere.

Therefore, the fan $\{v_1,v_2,-v_1+nv_2,-v_2\}$ and the graph Figure \ref{tfig15} each describe this action on $\Sigma_n$.
\end{exa}

\section{Constructing 4-dimensional almost complex torus manifolds} \label{s3}

In this section, we use the equivariant plumbing to construct a 4-dimensional almost complex torus manifold from a given multi-fan.

Let $a$ be an integer. Let $\mathcal{O}(a)$ denote the holomorphic line bundle over $\mathbb{CP}^1$ whose self-intersection number of the zero section is $a$. That is, the quotient of $(\mathbb{C}^2-\{0\}) \times \mathbb{C}$ by a $C^*$-action given by 
\begin{center}
$z \cdot (z_1,z_2,w)=(zz_1,zz_2,z^aw)$,
\end{center}
for all $z \in C^*$ and $(z_1,z_2,w) \in (\mathbb{C}^2-\{0\}) \times \mathbb{C}$. For $(z_1,z_2,w) \in (\mathbb{C}^2-\{0\}) \times \mathbb{C}$, we denote by $[z_1,z_2,w]$ its equivalence class.

Let $T^2$ act on $\mathcal{O}(a)$ by 
\begin{center}
$g \cdot [z_1,z_2,w]=[z_1,g^{u_1}z_2,g^{u_2}w]$
\end{center}
for all $g \in T^2$, for some non-zero $u_1,u_2 \in \mathbb{Z}^2$ such that $au_1 \neq u_2$. The action has two fixed points $q_1=[1,0,0]$ and $q_2=[0,1,0]$, and the weights at these fixed points $q_1$ and $q_2$ are $\{u_1,u_2\}$ and $\{-u_1,-au_1+u_2\}$, respectively. 

In the below theorem, given an admissible multi-fan, we use the equivariant plumbing to construct a 4-dimensional almost complex torus manifold described by the multi-fan. This construction is a slight modification of the construction in \cite{J5}, which closely follows the idea of a construction of a 4-dimensional almost complex torus manifold in Theorem 5.1 of \cite{M} but in a different way; the difference is explained in \cite{J5}. We restate and modify the statement and proof to suit our purposes.

\begin{theo} \label{tt31} Let $V=\{v_1,v_2,\cdots,v_k\}$ be an admissible multi-fan. Then there exists a 4-dimensional almost complex torus manifold described by $V$.
\end{theo}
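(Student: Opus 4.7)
The plan is to construct $M$ by equivariant plumbing of line-bundle neighborhoods, one for each vector $v_i$ of the multi-fan. For each $i \in \{1, \dots, k\}$, condition (2) of admissibility gives an integer $a_i$ with $v_{i+1} = -a_i v_i - v_{i-1}$, and this is precisely the relation appearing in the weight formula for $\mathcal{O}(a_i)$ recalled just before the theorem. Accordingly I would equip $\mathcal{O}(a_i)$ with the $T^2$-action obtained by setting $u_1 = v_i$ and $u_2 = -v_{i-1}$, so that its two fixed points on the zero section $\mathbb{CP}^1$ are $q_{i,1}$ with weights $\{v_i,-v_{i-1}\}$ and $q_{i,2}$ with weights $\{-v_i,\,v_{i+1}\}$, and the zero section is itself a $v_i$-sphere joining them.

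Next I would replace $\mathcal{O}(a_i)$ with a small $T^2$-invariant disk-bundle neighborhood $N_i$ of its zero section. Near $q_{i,2}$, $N_i$ is equivariantly modeled on $(\mathbb{C}^2, T^2)$ with weights $(-v_i, v_{i+1})$, and near $q_{i+1,1}$, $N_{i+1}$ is modeled on $(\mathbb{C}^2, T^2)$ with weights $(v_{i+1}, -v_i)$. Admissibility condition (1) applied to the pair $v_i, v_{i+1}$ ensures $\{v_i, v_{i+1}\}$ is a basis of $\mathbb{Z}^2$, so the two local models differ only by a swap of factors, and an equivariant plumbing gluing the neighborhoods at $q_{i,2}$ and $q_{i+1,1}$ by exchanging base and fiber coordinates is well-defined. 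Performing this plumbing cyclically for all $i \pmod{k}$ produces a closed, oriented, smooth 4-manifold $M$ with a $T^2$-action whose fixed points are $p_1, \dots, p_k$ (with $p_i$ corresponding to the identification of $q_{i-1,2}$ with $q_{i,1}$), joined cyclically by $v_i$-spheres whose weights at $p_i$ are $\{v_i, -v_{i-1}\}$. Compactness is automatic since each $N_i$ is a disk bundle over a compact base and only finitely many are plumbed; cyclic closure is automatic because each plumbing step is locally determined by two consecutive vectors of $V$.

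Finally, I would endow $M$ with a $T^2$-invariant almost complex structure. Each $N_i$ carries the complex structure inherited from $\mathcal{O}(a_i)$, and on the plumbing overlap near $p_{i+1}$ the two structures are both compatible with the same weight decomposition of the tangent space under $T^2$. The main obstacle is patching these local complex structures across the plumbing regions: the coordinate exchange used in the plumbing need not be biholomorphic, so the two integrable structures may disagree on the overlap. However, the space of $T^2$-invariant linear complex structures on a $\mathbb{C}^2$ with a prescribed splitting into weight-$\alpha$ and weight-$\beta$ subspaces (with $\alpha, \beta$ a basis) is contractible, so on the overlap the two structures can be equivariantly interpolated using a $T^2$-invariant bump function that equals $1$ near $q_{i,2}$ in $N_i$ and $0$ near $q_{i+1,1}$ in $N_{i+1}$. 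The resulting $T^2$-invariant almost complex structure $J$ on $M$ is well-defined and smooth, and by construction the singleton family $\{V\}$ describes $(M,J)$ in the sense of Definition \ref{td13}.
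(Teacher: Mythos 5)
Your construction follows the same route as the paper (equivariant plumbing of the disk bundles of $\mathcal{O}(a_i)$ with the $T^2$-action determined by $u_1=v_i$, $u_2=-v_{i-1}$, using admissibility to match weights at the plumbing points), but there is a genuine gap at the step where you assert that ``cyclic closure is automatic'' and that the plumbed manifold is already closed. It is not. Each $N_i$ is a compact $4$-manifold with boundary, and plumbing two of them identifies a bidisk $D^2\times D^2$ in one with a bidisk in the other by swapping base and fiber coordinates; this removes pieces of the boundaries and reglues them, but the result still has non-empty boundary. Even for a cyclic plumbing graph the outcome $N$ is a compact $4$-manifold whose boundary is a $3$-manifold (here $\partial N\simeq S^1\times T^2$), exactly as for the familiar linear or $E_8$ plumbings. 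The paper's proof therefore has an additional, essential step that your argument omits: one must cap off $\partial N$ by gluing in $D^2\times T^2$ along $S^1\times T^2$ (following Masuda's Theorem 5.1) to obtain a closed manifold, and one must check that the $T^2$-action and the invariant almost complex structure extend over this cap. This capping is where compactness without boundary actually comes from, and it also introduces no new fixed points, which is needed for the final claim that $M^{T^2}=\{p_1,\dots,p_k\}$.

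Your treatment of the almost complex structure (interpolating the two integrable structures on the plumbing overlaps through the contractible space of invariant complex structures compatible with the weight splitting) is a reasonable way to handle the plumbed piece $N$, and is in the spirit of what the cited construction does; but as stated it only produces an invariant almost complex structure on $N$, not on the closed manifold, so it too must be supplemented by the extension over $D^2\times T^2$. Once the capping step is added and the structure is extended across it, the rest of your argument (identification of the fixed points $p_i=q_{i,1}$, the $v_i$-spheres joining $p_i$ to $p_{i+1}$, and the conclusion that $V$ describes $M$) goes through as in the paper.
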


\begin{proof}
Denote $v_0:=v_k$ and $v_{k+1}:=v_1$. Since $V$ is admissible, for each $1 \leq i \leq k$,
\begin{enumerate}
\item $v_{i-1}$ and $v_i$ form a basis of $\mathbb{Z}^2$, and
\item $v_{i+1}=-a_i v_i-v_{i-1}$ for some integer $a_i$.
\end{enumerate}
For each $1 \leq i \leq k$, let $D_{a_i}(v_i,-v_{i-1})$ be the disk bundle of the holomorphic line bundle $\mathcal{O}(a_i)$ over $\mathbb{CP}^1$ whose self-intersection number of the zero section is $a_i$, where $T^2$ acts on $\mathcal{O}(a_i)$ by
\begin{center}
$g \cdot [z_1,z_2,w]=[z_1, g^{v_i}z_2, g^{-v_{i-1}}w]$
\end{center}
for all $g \in T^2$. This action has two fixed points $q_{i,1}=[1,0,0]$ and $q_{i,2}=[0,1,0]$ that have weights $\{v_{i},-v_{i-1}\}$ and $\{-a_i v_i-v_{i-1},-v_i\}$, respectively. Let $q_{k+1,j}:=q_{1,j}$ for $j \in \{1,2\}$. Since $v_{i+1}=-a_i v_i-v_{i-1}$, the weights $\{-a_i v_i-v_{i-1},-v_i\}$ at $q_{i,2}$ and the weights $\{v_{i+1},-v_i\}$ at $q_{i+1,1}$ agree for each $i \in \{1,\cdots,k\}$. Hence, for each $1 \leq i \leq k$, we can equivariantly plumb two manifolds $D_{a_i}(v_i,-v_{i-1})$ and $D_{a_{i+1}}(v_{i+1},-v_i)$ at $q_{i,2}$ and $q_{i+1,1}$ (Here, $a_0:=a_k$ and $a_{k+1}:=a_1$). As a result, we get a 4-dimensional compact connected manifold $N$ with a $T^2$-action. As in the proof of Theorem 5.1 of \cite{M}, by pasting $N$ and $D^2 \times T^2$ along the boundary $\partial N \simeq S^1 \times T^2$ of $N$, we get a 4-dimensional almost complex torus manifold $M$; it admits a $T^2$-invariant almost complex structure.

By the construction, the $T^2$-action on $M$ has $k$ fixed points $p_i:=q_{i,1}$, $1 \leq i \leq k$, and $(p_i,p_{i+1})$ is $v_i$-sphere for each $1 \leq i \leq k$ ($p_{k+1}:=p_1$). In particular, the weights at $p_i$ are $\{v_i,-v_{i-1}\}$ for $1 \leq i \leq k$. Thus, $V$ describes $M$. \end{proof}

\section{Correspondence between graphs and families of multi-fans} \label{s5.1}

In this section, we establish a one-to-one correspondence between families of admissible multi-fans and admissible 2-graphs (Lemma \ref{tl21}). We also demonstrate that for a 4-dimensional almost complex torus manifold, there exist a family of multi-fans and a graph describing it (Proposition \ref{tp24}). Moreover, we show that a family of admissible multi-fans accurately describes some 4-dimensional almost complex torus manifold (Theorem \ref{tt33}). These results lead to the proof of Theorem \ref{tt19}, which is provided at the end of this section.

For an action of a torus $T^k$ on a compact almost complex manifold $M$ with isolated fixed points, there exists a multigraph describing $M$ that has no self-loops \cite{J6}. If $M$ is an almost complex torus manifold, that is, $k=\frac{1}{2} \dim M$, such a multigraph is a graph.

\begin{pro} \label{tp22} \cite{J6}
Let $M$ be an almost complex torus manifold. Then there exists a graph describing $M$ that has no self-loops.
\end{pro}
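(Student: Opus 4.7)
The plan is to construct $\Gamma$ directly from the isotropy submanifolds through each fixed point, and then to verify that it describes $M$, has no self-loops, and has no multiple edges between vertices.

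I would begin by analyzing isotropy components. Let $p \in M^{T^n}$ have weights $w_{p,1},\ldots,w_{p,n}$. Since $p$ is an isolated fixed point of an effective $T^n$-action on the $2n$-manifold $M$, the weights at $p$ are pairwise $\mathbb{Q}$-linearly independent. For each $i$, let $N_{p,i}$ denote the connected component of $M^{\ker w_{p,i}}$ containing $p$. Because $\ker w_{p,i}$ preserves $J$, the submanifold $N_{p,i}$ is $J$-invariant; by pairwise linear independence, $T_pN_{p,i}$ is exactly the weight space $L_{p,i}$ and has complex dimension one. Hence $N_{p,i}$ is a closed Riemann surface with an induced effective action of the quotient circle $T^n/\ker w_{p,i}$ for which $p$ is an isolated fixed point with nonzero tangential weight. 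This forces $N_{p,i}\cong\mathbb{CP}^1$ with exactly two fixed points $p$ and $q=q(p,i)$, and the opposite-weights property for $S^1$-actions on $\mathbb{CP}^1$ yields that $-w_{p,i}$ is a weight at $q$.

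Next I would define $\Gamma$ via an involution on pairs. The map $(p,i)\mapsto(q(p,i),j)$, where $j$ is the unique index with $w_{q(p,i),j}=-w_{p,i}$ and $N_{q(p,i),j}=N_{p,i}$, is an involution on the set of (fixed point, weight) pairs. Let $V=M^{T^n}$ and introduce one directed edge per orbit of this involution, choosing an orientation and labeling by the corresponding weight. Conditions (i)--(iii) of Definition \ref{d12} then hold by construction, in particular (iii) since the two endpoints of every edge lie on a common isotropy $\mathbb{CP}^1$.

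No self-loops arise, since a self-loop would require $q(p,i)=p$, which is impossible as a $\mathbb{CP}^1$ with an effective circle action has two distinct fixed points. The main obstacle is the remaining step of verifying that $\Gamma$ has no multiple edges, that is, excluding the configuration in which two distinct weights $w_1 \ne w_2$ at some fixed point $p$ produce isotropy spheres $N_{p,1}, N_{p,2}$ terminating at the same $q$. Local tangent-space information does not forbid this, so a global argument is necessary: one plausible route is to compare the equivariant normal bundles of the two spheres with the rigidity of the equivariant almost complex structure on a neighborhood of $N_{p,1}\cup N_{p,2}$, and combine this with global fixed-point bounds on almost complex torus manifolds to derive a contradiction. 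Executing this step is the technical core of the argument.
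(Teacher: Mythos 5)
Your construction of the multigraph is the same as the paper's: for each weight $w$ at a fixed point $p$ you take the component of $M^{\ker w}$ through $p$, observe it is a $2$-sphere with exactly two fixed points because the weights at $p$ form a basis of $\mathbb{Z}^n$ (so no two are linearly dependent), and pair up $(p,w)$ with $(q,-w)$; the no-self-loop observation is also correct. The genuine gap is exactly where you flag it: you do not actually rule out multiple edges, and since ``graph'' in this paper means a multigraph without multiple edges, that verification is part of the claim, not an optional refinement. Moreover, the route you sketch (rigidity of the almost complex structure on a neighborhood of $N_{p,1}\cup N_{p,2}$ combined with global fixed-point bounds) is not the mechanism that works; no global input is needed.

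The missing step is a short local computation with the equivariant normal bundle of \emph{one} of the two putative spheres, which in dimension $4$ the paper carries out in the paragraph following Proposition \ref{tp23} using Lemma \ref{tl23}. If $e_1,e_2$ were two edges joining $p$ and $q$, say both oriented from $p$ to $q$, then $p$ has weights $\{w(e_1),w(e_2)\}$, $q$ has weights $\{-w(e_1),-w(e_2)\}$, and $(p,q)$ is a $w(e_1)$-sphere. Its normal bundle is an equivariant line bundle $\mathcal{O}(a)$ over $\mathbb{CP}^1$, and Lemma \ref{tl23} forces the remaining weights at the two poles to satisfy $w(e_2)=-w(e_2)+a\,w(e_1)$, i.e.\ $2w(e_2)=a\,w(e_1)$ for an integer $a$; this contradicts the fact that $w(e_1)$ and $w(e_2)$ form a basis of $\mathbb{Z}^2$ (the mixed-orientation case gives $-2w(e_2)=a\,w(e_1)$ and fails the same way). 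In higher dimensions the analogous statement is \cite[Proposition 2.8]{J6}, which is what the paper cites for the proposition as stated; your proposal would need either that reference or the corresponding normal-bundle argument to be complete.
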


In a graph describing an almost complex torus manifold, each edge corresponds to an isotropy sphere (see Definition~\ref{tsphere}.)

\begin{pro} \label{tp23}
Let $M$ be an almost complex torus manifold. Let $\Gamma$ be a graph describing $M$. If $(p,q)$ is $w$-edge in $\Gamma$, then $(p,q)$ is $w$-sphere.
\end{pro}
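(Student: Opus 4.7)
My plan is to identify the component $Z\subseteq M^{\ker w}$ containing $p$ and $q$ with a $T^n$-invariant $\mathbb{CP}^1$ on which the torus acts in the standard way.

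First, I would establish that for an almost complex torus manifold $M$ of real dimension $2n$ (so $T^n$ acts effectively), the weights at any fixed point form a $\mathbb{Z}$-basis of $\mathbb{Z}^n$. Indeed, the common kernel $K\subseteq T^n$ of the weights at $p$ acts trivially on $T_pM$, hence on a full neighborhood of $p$; since $M^K$ is a closed submanifold containing an open subset of the connected manifold $M$, we get $M^K=M$, so effectiveness gives $K=\{1\}$, and $n$ generators of $\mathbb{Z}^n$ are automatically a basis.

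Let $e$ be the edge with $i(e)=p$, $t(e)=q$, $w(e)=w$. By condition (ii) of Definition \ref{d12}, $w$ is one of the weights at $p$; by the previous step it is primitive, and no other weight at $p$ lies in $\mathbb{Z} w$. So $\ker w\subseteq T^n$ is a codimension-one subtorus, and the $\ker w$-fixed subspace of $T_pM$ is exactly the single weight space $L_{p,i}$ with $w_{p,i}=w$, which is complex one-dimensional. Hence the component $Z$ of $M^{\ker w}$ through $p$ (which contains $q$ by condition (iii)) is a closed connected almost complex manifold of real dimension $2$, i.e., a closed Riemann surface. The quotient $S^1:=T^n/\ker w$ acts on $Z$, and primitivity of $w$ makes this action effective, with induced weight $\pm 1$ at $p$.

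To conclude, I would invoke the identity $\chi(Z)=|Z^{S^1}|$, valid for any effective $S^1$-action on a closed surface with isolated fixed set (which holds here since $Z^{S^1}=Z\cap M^{T^n}$ sits inside the isolated set $M^{T^n}$). Since $p\in Z^{S^1}$ this gives $\chi(Z)\geq 1$, forcing $Z\cong\mathbb{CP}^1$ and exactly two $S^1$-fixed points on $Z$. Linearizing the rotation in suitable coordinates, the $T^n$-action on $Z$ takes the form
\[
g\cdot[z_0:z_1]=[z_0:g^w z_1],
\]
with fixed points $[1:0]$ of weight $w$ and $[0:1]$ of weight $-w$. The basis property forbids both $w$ and $-w$ from being weights at the same fixed point, so $p\neq q$; matching weights then yields $p=[1:0]$ and $q=[0:1]$, proving $(p,q)$ is a $w$-sphere in the sense of Definition \ref{tsphere}. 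The main point requiring care is the identification of $Z$ with the standard $\mathbb{CP}^1$ equipped with the specified $T^n$-action: the topological half (genus zero) is an immediate Euler-characteristic argument, but matching the weight at $p$ with the correct coordinate chart, so that the sign in Definition \ref{tsphere} comes out consistently, is the subtle part.
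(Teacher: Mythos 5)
Your proof is correct and follows essentially the same route as the paper: pass to the component $F$ of $M^{\ker w}$ through $p$ and $q$, use the fact that the weights at a fixed point form a basis of $\mathbb{Z}^n$ to see that $F$ is real $2$-dimensional, and identify it with the standard $\mathbb{CP}^1$ with the induced action. You simply fill in details the paper leaves implicit (the effectiveness argument for the basis property, the Euler-characteristic argument forcing $F\cong S^2$ with exactly two fixed points, and the explicit linearization), all of which are carried out correctly.
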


\begin{proof}
Suppose $(p,q)$ is $w$-edge. By Definition \ref{d12}, two fixed points $p$ and $q$ are in the same component $F$ of $M^{\ker w}$, which is a compact almost complex submanifold. Let $\dim M=2n$. Because the weights at $p$ form a basis of $\mathbb{Z}^n$, no other weights at $p$ are multiples of $w$. This implies that $F$ is 2-dimensional. The $T^n$-action on $M$ restricts to act on $F$, having $p$ and $q$ as fixed points with weights $w$ and $-w$ for this action, respectively. That is, $(p,q)$ is $w$-sphere.
\end{proof}

The proof of Proposition \ref{tp23} explains how to associate a graph to an almost complex torus manifold. When a fixed point $p$ has weight $w$, we consider a component $F$ of $M^{\ker w}$ that contains $p$; then $F$ is 2-dimensional, and thus is $\mathbb{CP}^1$, and so has another fixed point $q$ with weight $-w$. Then we draw an edge from $p$ to $q$ with a label $w$. That a resulting multigraph $\Gamma$ is in fact a graph requires an additional argument \cite[Proposition 2.8]{J6}, but if $\dim M=4$ this can be seen as follows. Suppose that $\dim M=4$ and there are two edges $e_1$ and $e_2$ between $p$ and $q$. Assume each $e_i$ has initial vertex $p$ and terminal vertex $q$; the other cases will be similar. Then $p$ has weights $\{w(e_1),w(e_2)\}$, $q$ has weights $\{-w(e_1),-w(e_2)\}$, and by Proposition \ref{tp23}, $(p,q)$ is $w(e_1)$-sphere. By Lemma \ref{tl23} below, we must have $w(e_2)=-w(e_2)+aw(e_1)$ for some integer $a$, that is, $2w(e_2)=aw(e_1)$. However, $w(e_1)$ and $w(e_2)$ are the weights at $p$, and so they form a basis of $\mathbb{Z}^2$, which leads to a contradiction.

For a 4-dimensional almost complex torus manifold, there exist an admissible graph and a family of admissible multi-fans describing it. First, we establish a one-to-one correspondence between admissible 2-graphs and families of admissible multi-fans.

\begin{lem} \label{tl21}
There is a one-to-one correspondence between admissible 2-graphs and families of admissible multi-fans.
\end{lem}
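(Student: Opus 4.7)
The plan is to construct explicit maps in both directions and verify that they are mutually inverse.

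From an admissible 2-graph $\Gamma$ to a family of admissible multi-fans: since $\Gamma$ is 2-regular and finite, each connected component of $\Gamma$, regarded as an undirected multigraph, is a cycle. For each component I would choose a starting vertex $p_1$ and a direction of traversal $p_1, p_2, \ldots, p_k, p_{k+1} := p_1$. By the convention introduced immediately after Definition~\ref{td14}, each consecutive pair $(p_i, p_{i+1})$ is a $v_i$-edge for a uniquely determined $v_i \in \mathbb{Z}^2$ (namely $w(e)$ if the underlying edge $e$ is oriented from $p_i$ to $p_{i+1}$, and $-w(e)$ otherwise). I would then assemble $V := (v_1, v_2, \ldots, v_k)$ and verify the conditions of Definition~\ref{td12}: condition (1) is exactly condition (1) of Definition~\ref{td17} at the vertex $p_i$ (since $v_{i-1}, v_i$ form a basis iff the weights $-v_{i-1}, v_i$ at $p_i$ do), and condition (2) is the restatement of Definition~\ref{td17}(2) applied to the three successive edges meeting at $p_i$ and $p_{i+1}$.

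Conversely, from a family $\{V_1, \ldots, V_m\}$ of admissible multi-fans I would construct a graph as the disjoint union of cycles: for each $V_j = \{v_{j,1}, \ldots, v_{j,k_j}\}$, introduce vertices $p_{j,1}, \ldots, p_{j,k_j}$ and oriented edges $(p_{j,i}, p_{j,i+1})$ with label $v_{j,i}$ (indices modulo $k_j$). Conditions (1) and (2) of Definition~\ref{td17} transfer directly from the admissibility of each $V_j$. The two constructions are mutually inverse once one identifies a multi-fan sequence with its orbit under cyclic shift and reversal of indices, which corresponds precisely to the (non-canonical) choice of starting vertex and traversal direction on each underlying cycle; modulo this natural equivalence one obtains the claimed one-to-one correspondence.

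The one non-formal step is verifying that a sequence produced from the graph actually satisfies the rotational monotonicity clause in Definition~\ref{td11}, which is not an explicit hypothesis in Definition~\ref{td17}. The key identity is that the recursion $v_{i+1} = -a_i v_i - v_{i-1}$ gives
\[\det(v_i, v_{i+1}) = \det(v_i, -a_i v_i - v_{i-1}) = -\det(v_i, v_{i-1}) = \det(v_{i-1}, v_i),\]
so the sign of $\det(v_i, v_{i+1})$ is constant along the sequence; since condition (1) forces this determinant to be $\pm 1$ and hence nonzero, each consecutive turn is in the same rotational direction, and therefore the full cycle $v_1, \ldots, v_k, v_1$ is monotone counterclockwise (or monotone clockwise) as required. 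I expect this determinant computation to be the main content of the proof; the remainder is bookkeeping matching local vertex data to local vector triples.
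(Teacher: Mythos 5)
Your proof is correct and follows essentially the same route as the paper's: traverse each connected component of the 2-regular graph (a cycle) to read off a multi-fan from the successive edge labels, and reverse the construction for the converse, matching Definition~\ref{td17} against Definition~\ref{td12} vertex by vertex. The only difference is that you make explicit, via the identity $\det(v_i,v_{i+1})=\det(v_{i-1},v_i)$ forced by the recursion $v_{i+1}=-a_i v_i - v_{i-1}$, the rotational-monotonicity claim that the paper merely asserts when it says the $w_{j,i}$'s are ``in (counter)clockwise order''; this fills in a detail the paper leaves implicit rather than taking a different approach.
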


\begin{proof}
Let $\Gamma$ be an admissible 2-graph. Let $\Gamma_1$, $\cdots$, $\Gamma_m$ denote the connected components of $\Gamma$. Let $p_{j,1}$, $\cdots$, $p_{j,k_j}$ be successive vertices of $\Gamma_j$ for $1 \leq j \leq m$. Let $(p_{j,i},p_{j,i+1})$ be $w_{j,i}$-edge for $1 \leq i \leq k_j$; here, $p_{j,k_j+1}:=p_{j,1}$. Since $\Gamma$ is admissible, $w_{j,i+1}=-a_{j,i} w_{j,i}-w_{j,i-1}$ for some integer $a_{j,i}$, for all $1 \leq i \leq k_j$; here, $w_{j,0}:=w_{j,k_j}$ and $w_{j,k_j+1}:=w_{j,1}$. For $1 \leq j \leq m$, define a multi-fan $V_j$ by $V_j=\{w_{j,1},\cdots, w_{j,k_j}\}$ and let $\Delta$ be the family of $V_j$'s. Since $\Gamma$ is admissible, the elements $\{w_{j,i},-w_{j,i-1}\}$ at $p_{j,i}$ form a basis of $\mathbb{Z}^2$ for all $1 \leq i \leq k_j$ and for all $1 \leq j \leq m$. Moreover, for each $j$, since $w_{j,i+1}=-a_{j,i} w_{j,i}-w_{j,i-1}$ for all $1 \leq i \leq k_j$, $w_{j,i}$'s are in (counter)clockwise order, and hence $V_j$ is admissible. Thus, $\Delta$ is a family of admissible multi-fans.

Conversely, suppose that we are given a family $\Delta$ of admissible multi-fans $V_1$, $\cdots$, $V_m$. For each $1 \leq j \leq m$, let $V_j=\{v_{j,1},\cdots,v_{j,k_j}\}$. Then for each $j$, construct a graph $\Gamma_j$ so that $(p_{j,i},p_{j,i+1})$ is $v_{j,i}$-edge for $1 \leq i \leq k_j$, with the setting $p_{j,k_j+1}:=p_{j,1}$. Since $V_j$ is admissible, $v_{j,i}$ and $v_{j,i+1}$ form a basis of $\mathbb{Z}^2$ and $v_{j,i+1}=-a_{j,i} v_{j,i}-v_{j,i-1}$ for some integer $a_{j,i}$, for each $i$ (here $v_{j,k_j+1}:=v_{j,1}$). Therefore, $\Gamma_j$ is admissible, and hence so is $\Gamma$.
\end{proof}

For a 4-dimensional almost complex $T^k$-manifold, when $(p,q)$ is $w$-sphere, the next lemma shows a relation between the self-intersection number of the sphere and the weights at $p$ and $q$.

\begin{lem} \label{tl23} Let a $k$-dimensional torus $T^k$ act on a 4-dimensional almost complex manifold $M$. Let $(p,q)$ be $w$-sphere, for some $p,q \in M^{T^k}$. Let $p$ and $q$ have weights $\{w,w'\}$ and $\{-w,w''\}$, respectively. Then $w'=w''+aw$, where $a$ is the self-intersection number of the zero section of the normal bundle of the $w$-sphere $(p,q)$. 
\end{lem}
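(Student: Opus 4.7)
The plan is to identify the normal bundle of the $w$-sphere equivariantly with one of the standard bundles $\mathcal{O}(a)$ that was already described in Section \ref{s3}, and then simply read off the weights at the two fixed points.

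First, let $S \cong \mathbb{CP}^1$ denote the $w$-sphere containing $p$ and $q$. Because $\dim_{\mathbb{R}} M = 4$ and $\dim_{\mathbb{R}} S = 2$, the normal bundle $N \to S$ is a complex line bundle carrying a $T^k$-action compatible with the $T^k$-action on $S$. As a complex line bundle over $\mathbb{CP}^1$, its isomorphism type is determined by the degree of its first Chern class, which equals the self-intersection number of the zero section; hence $N \cong \mathcal{O}(a)$ as a complex line bundle, where $a$ is the self-intersection number in the statement.

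Next, I would promote this to an equivariant isomorphism. At $p$, the tangent space to $S$ carries weight $w$ (by definition of a $w$-sphere) and the fiber $N_p$ carries weight $w'$ (the remaining weight at $p$). The $T^k$-action on $N$ extends the action on $S$ and acts linearly on each fiber, so after a choice of trivialization one may arrange the action to take the form
\[
g \cdot [z_1,z_2,\xi] = [z_1,\, g^{w}\,z_2,\, g^{w'}\,\xi]
\]
used in Section \ref{s3} (with $u_1 = w$ and $u_2 = w'$). This is the main technical point: any $T^k$-equivariant complex line bundle of degree $a$ over the $T^k$-space $S$ is equivariantly isomorphic to this model, since two such bundles differ by the degree-zero equivariant line bundles, and the weights at the fixed points determine the equivariant structure uniquely.

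Finally, I would apply the weight computation already carried out at the beginning of Section \ref{s3}: for the bundle $\mathcal{O}(a)$ with that $T^k$-action, the fixed points $q_1 = [1,0,0]$ and $q_2 = [0,1,0]$ have weights $\{w, w'\}$ and $\{-w,\, w' - a w\}$ respectively. The first matches the weights at $p$, while the second must match the weights $\{-w, w''\}$ at $q$. Comparing the normal components gives $w'' = w' - a w$, equivalently $w' = w'' + a w$, as claimed. The only subtlety to guard against is the orientation convention in the definition of the self-intersection number; once this is fixed to agree with the convention of Section \ref{s3}, no further calculation is required.
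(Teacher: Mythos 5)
Your proposal is correct and follows essentially the same route as the paper: identify the normal bundle of the $w$-sphere equivariantly with the model $\mathcal{O}(a)$ carrying the action $g\cdot[z_1,z_2,\xi]=[z_1,g^{w}z_2,g^{w'}\xi]$ from Section \ref{s3}, read off the weights $\{w,w'\}$ and $\{-w,w'-aw\}$ at the two fixed points, and compare with $\{-w,w''\}$ to get $w'=w''+aw$. The paper states the equivariant identification more tersely (after noting that the complex structure on $S^2$ is unique up to biholomorphism), whereas you supply a brief justification for why the equivariant structure is determined; either way the argument is the same.
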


\begin{proof}
Let $F$ denote the $w$-sphere $(p,q)$. By definition, $F$ is a compact almost complex submanifold of $M$. Any almost complex structure on $S^2$ is integrable, and a complex structure on $S^2$ is unique, up to biholomorphism. The normal bundle $NF$ of $F$ is a $T^k$-invariant complex line bundle over $F$, which is isomorphic to some $\mathcal{O}(b)$ with a $T^k$-action
\begin{center}
$g \cdot [z_1,z_2,w]=[z_1,g^{u_1}z_2,g^{u_2}w]$
\end{center}
that has 2 fixed points $[1,0,0]$ and $[0,1,0]$ having weights $\{u_1,u_2\}$ and $\{-u_1,-bu_1+u_2\}$, respectively. Since $b=a$ and $u_1=w$, it follows that $u_2=w'$ and $w''=-bu_1+u_2=-aw+w'$. \end{proof}

For a 4-dimensional almost complex torus manifold, a graph describing it is admissible; thus a family of multi-fans describing it is also admissible.

\begin{pro} \label{tp24}
Let $M$ be a 4-dimensional almost complex torus manifold. Then there exist an admissible 2-graph and a family of admissible multi-fans describing $M$.
\end{pro}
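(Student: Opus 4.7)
My plan is to start from a graph $\Gamma$ describing $M$, verify the two conditions of Definition \ref{td17} directly, and then invoke Lemma \ref{tl21} to obtain the family of admissible multi-fans for free.

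By Proposition \ref{tp22} there is a graph $\Gamma$ describing $M$ with no self-loops. Since $\dim M = 4$ and a $2$-torus acts, each fixed point $p \in M^{T^2}$ has exactly two weights, and Definition \ref{d12}(ii), together with the absence of self-loops, forces $\Gamma$ to be $2$-regular. To see that the two labels incident to each vertex form a basis of $\mathbb{Z}^2$, I would use effectiveness: if the two weights $w_1, w_2$ at $p$ only spanned a proper sublattice of $\mathbb{Z}^2$, then a non-trivial subgroup of $T^2$ would act trivially on $T_pM$, and hence (by equivariant linearization near a fixed point) act trivially on an open neighborhood of $p$, contradicting effectiveness. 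This verifies admissibility condition (1).

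For admissibility condition (2), let $(p_1,p_2)$, $(p_2,p_3)$, $(p_3,p_4)$ be three successive edges with labels $w_1$, $w_2$, $w_3$. By Definition \ref{d12}(ii), the weights at $p_2$ are $\{w_2, -w_1\}$ and those at $p_3$ are $\{-w_2, w_3\}$. Proposition \ref{tp23} identifies $(p_2,p_3)$ as a $w_2$-sphere, and Lemma \ref{tl23} applied to this sphere (taking $w = w_2$, $w' = -w_1$, $w'' = w_3$) yields $-w_1 = w_3 + a w_2$ for some integer $a$, equivalently $w_3 = -a w_2 - w_1$. Thus $\Gamma$ is admissible, and by Lemma \ref{tl21} the corresponding family $\Delta$ of multi-fans is admissible as well. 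The family $\Delta$ describes $M$ in the sense of Definition \ref{td13}: each edge of $\Gamma$ is, by Proposition \ref{tp23}, an isotropy sphere with the correct label, and the vertices of each connected component of the $2$-regular graph $\Gamma$ arrange into a cyclic partition of $M^{T^2}$ of the required form.

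The main obstacle is admissibility condition (2), which is the only step that needs genuine geometric input beyond effectiveness and the defining properties of $\Gamma$. It is precisely Lemma \ref{tl23} that converts the topological datum ``the self-intersection number of the isotropy sphere is an integer'' into the combinatorial recurrence defining admissibility; the integer $a$ is recovered as this self-intersection number. Everything else reduces to bookkeeping about $\Gamma$ and to the formal correspondence of Lemma \ref{tl21}.
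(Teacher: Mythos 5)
Your proposal is correct and follows essentially the same route as the paper: use Proposition \ref{tp22} to get a loop-free graph describing $M$, use Proposition \ref{tp23} together with Lemma \ref{tl23} (with exactly the same substitution $w=w_2$, $w'=-w_1$, $w''=w_3$) to obtain the recurrence $w_3=-aw_2-w_1$, and then invoke Lemma \ref{tl21} and Proposition \ref{tp23} again to produce the family of admissible multi-fans describing $M$. Your explicit verification of admissibility condition (1) via effectiveness and equivariant linearization is a detail the paper leaves implicit (it is the standard fact that the weights at a fixed point of a torus manifold form a basis of the weight lattice, already used in the proof of Proposition \ref{tp23}), but it is correct and does no harm.
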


\begin{proof}
By Proposition \ref{tp22}, there is a graph $\Gamma$ describing $M$ that has no self-loops. Suppose that $(p_1,p_2)$ is $w_1$-edge, $(p_2,p_3)$ is $w_2$-edge, and $(p_3,p_4)$ is $w_3$-edge; because $\Gamma$ has no multiple edges, $p_1$ and $p_3$ are necessarily distinct (but it may happen that $p_1=p_4$). By Proposition \ref{tp23}, $(p_2,p_3)$ is $w_2$-sphere. Since $(p_2,p_3)$ is $w_2$-sphere, $p_2$ has weights $\{-w_1,w_2\}$, and $p_3$ has weights $\{-w_2,w_3\}$, by applying Lemma \ref{tl23} to this $w_2$-sphere $(p_2,p_3)$, we conclude that $w_3=-aw_2-w_1$ for some integer $a$. Repeating this argument for any three successive edges, it follows that $\Gamma$ is admissible.

By Lemma \ref{tl21}, $\Gamma$ naturally gives rise to a family $\Delta$ of admissible multi-fans. By Proposition \ref{tp23}, if $(p,q)$ is $w$-edge in $\Gamma$, then $(p,q)$ is $w$-sphere. This implies that $\Delta$ describes $M$. \end{proof}

We have established the following relationships.
\begin{center}
$\{\textrm{4-dimensional almost complex torus manifolds}\} \to \{\textrm{admissible graphs}\} \iff \{\textrm{families of admissible multi-fans}\}$
\end{center}

Given two 4-dimensional almost complex torus manifolds, we can take an equivariant connected sum along free orbits of them to construct another almost complex torus manifold.

\begin{lem} \cite{J5} \label{tl32} For $i=1,2$, let $M_i$ be a 4-dimensional almost complex torus manifold. Then we can perform an equivariant connected sum along free orbits of $M_1$ and $M_2$ to construct a 4-dimensional almost complex torus manifold. \end{lem}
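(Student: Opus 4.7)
The plan is to carry out the standard equivariant connected sum along free $T^2$-orbits, following the strategy in \cite{J5}. The argument proceeds in three stages: locate a free orbit with an invariant tubular neighborhood in each $M_i$, glue the complements equivariantly, and finally extend the almost complex structure across the gluing region.

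For the first stage, since $T^2$ acts effectively on the connected 4-manifold $M_i$ and the torus is abelian, the principal isotropy subgroup, being contained in every stabilizer, equals the intersection of all stabilizers and is therefore trivial. Hence free orbits form an open dense subset of $M_i$. Choose a free orbit $O_i \subset M_i$. By the equivariant tubular neighborhood theorem, $O_i$ admits a $T^2$-invariant open neighborhood $U_i$ equivariantly diffeomorphic to $T^2 \times D^2$, where $T^2$ acts by left translation on the first factor and trivially on the second.

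For the second stage, fix small $\epsilon > 0$, set $V_i := T^2 \times \mathrm{int}(D^2_{\epsilon}) \subset U_i$ and $W_i := M_i \setminus V_i$. Each $W_i$ has a new boundary component equivariantly diffeomorphic to $T^2 \times S^1_{\epsilon}$. Glue $W_1$ to $W_2$ along these boundary tori by a $T^2$-equivariant, orientation-reversing diffeomorphism $\phi : T^2 \times S^1 \to T^2 \times S^1$ to form a closed 4-manifold $M := W_1 \cup_\phi W_2$ carrying an effective $T^2$-action. Since only free orbits were removed, the fixed set $M^{T^2} = M_1^{T^2} \sqcup M_2^{T^2}$ is nonempty, and $M$ is connected (even compact) by construction.

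For the third and most delicate stage, I would equip $M$ with a $T^2$-invariant almost complex structure. The restrictions $J_i|_{W_i}$ are $T^2$-invariant, and the task is to make them match across the gluing tori. To do so, deform each $J_i$ on the collar $T^2 \times (\epsilon, 2\epsilon) \subset U_i$ so that near $\partial W_i$ it coincides with a chosen fixed $T^2$-invariant standard almost complex structure $J_{\mathrm{std}}$; this deformation is possible because, pointwise, the space of orientation-compatible complex structures on $\mathbb{R}^4$ is the path-connected sphere $SO(4)/U(2) \cong S^2$, and $T^2$-equivariance is preserved throughout by averaging over the free $T^2$-action on the collar. Choosing $\phi$ to intertwine $J_{\mathrm{std}}$ on the two sides then ensures the two invariant almost complex structures glue consistently on $M$. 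The main obstacle is precisely this final patching: convex combinations of almost complex structures are not themselves almost complex structures, so one cannot simply interpolate linearly but must deform within the bundle of orientation-compatible $J$'s using an equivariant cutoff function and a pointwise chosen path in $S^2$. Once this is done, $M$ is a 4-dimensional compact connected almost complex manifold with an effective $T^2$-action having fixed points, which is the desired 4-dimensional almost complex torus manifold.
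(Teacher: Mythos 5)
The paper does not prove Lemma \ref{tl32}; it is quoted from \cite{J5}, and your outline is the standard construction that the cited source carries out: free principal orbits exist for an effective torus action on a connected manifold, the slice theorem gives an invariant tube $T^2\times D^2$ with translation action on the first factor, and one glues the complements along $T^2\times S^1$ by an equivariant orientation-reversing diffeomorphism. Your stages (1) and (2) are correct, including the observations that effectiveness, compactness, connectedness, and the (nonempty) fixed point set survive the surgery.

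The only place that needs tightening is the justification in stage (3). Path-connectedness of $SO(4)/U(2)\cong S^2$ together with ``a pointwise chosen path'' is not by itself enough: choosing a path independently at each point does not produce a continuous deformation. What you actually need is a homotopy, rel the outer boundary, of the classifying map of $J_i$ on the collar into the space of compatible complex structures, ending at a constant map near $\partial W_i$; the obstruction is the homotopy class of this map restricted to the boundary. The correct mechanism is the one you gesture at with ``averaging over the free $T^2$-action'': since $T(T^2\times D^2)$ is equivariantly trivial and the action on the tube is free with trivial slice representation, a $T^2$-invariant $J$ on the tube is exactly a map from the quotient disk $D^2$ to $S^2$, so its restriction to the boundary circle of the quotient annulus is null-homotopic (it extends over the disk; alternatively $\pi_1(S^2)=0$), and the desired rel-boundary homotopy on the annulus exists. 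Without this reduction to the two-dimensional quotient one would be deforming a map from the three-torus $T^2\times S^1$ into $S^2$, where higher homotopy groups could a priori obstruct. With that point made precise, and with the routine check that the orientation-reversing gluing map can be chosen to intertwine the two standardized structures on the neck, your argument is complete and agrees with the proof in \cite{J5}.
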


We show that any admissible graph, and hence any family of admissible multi-fans, describes some 4-dimensional almost complex torus manifold.
Combining Theorem \ref{tt31} and Lemma \ref{tl32}, any finite family of admissible multi-fans describes some 4-dimensional almost complex torus manifold.

\begin{theo} \label{tt33}
\begin{enumerate} 
\item Let $\Delta$ be a family of admissible multi-fans $V_1$, $\cdots$, $V_m$. Then there exists a 4-dimensional almost complex torus manifold described by $\Delta$.
\item Let $\Gamma$ be an admissible graph. Then there exists a 4-dimensional almost complex torus manifold described by $\Gamma$.
\end{enumerate}
\end{theo}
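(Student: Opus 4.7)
The plan is to reduce both parts to the single-fan existence result (Theorem \ref{tt31}) together with the equivariant connected sum construction (Lemma \ref{tl32}), and then use the graph-to-fan-family correspondence (Lemma \ref{tl21}) for part (2).

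For part (1), I would proceed by induction on $m$, the number of multi-fans in $\Delta$. The base case $m=1$ is precisely Theorem \ref{tt31}: since each $V_j$ is admissible, it yields a $4$-dimensional almost complex torus manifold $M_j$ described by $V_j$. For the inductive step, assume we already have a $4$-dimensional almost complex torus manifold $N$ described by the family $\{V_1,\ldots,V_{m-1}\}$, and let $M_m$ be the manifold furnished by Theorem \ref{tt31} for $V_m$. Apply Lemma \ref{tl32} to form an equivariant connected sum $M=N \,\sharp\, M_m$ along free orbits of the $T^2$-action. The crucial observation is that since the connected sum is performed along \emph{free} orbits, no fixed points are created or destroyed and no isotropy sphere is touched; consequently the fixed point set partitions as $M^{T^2}=N^{T^2} \sqcup M_m^{T^2}$, and each $w$-sphere in $N$ or $M_m$ survives unchanged in $M$. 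Therefore the family $\{V_1,\ldots,V_m\}$ describes $M$ in the sense of Definition \ref{td13}, with the partition $A_1,\ldots,A_m$ being the original partitions inherited from $N$ and $M_m$.

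For part (2), given an admissible 2-graph $\Gamma$, apply Lemma \ref{tl21} to obtain the corresponding family $\Delta=\{V_1,\ldots,V_m\}$ of admissible multi-fans, whose components correspond to the connected components $\Gamma_1,\ldots,\Gamma_m$ of $\Gamma$. By part (1) there is a $4$-dimensional almost complex torus manifold $M$ described by $\Delta$, with a partition $M^{T^2}=A_1 \sqcup \cdots \sqcup A_m$ such that, writing $A_j=\{p_{j,1},\ldots,p_{j,k_j}\}$ and $V_j=\{v_{j,1},\ldots,v_{j,k_j}\}$, each $(p_{j,i},p_{j,i+1})$ is a $v_{j,i}$-sphere. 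Under the correspondence of Lemma \ref{tl21}, $(p_{j,i},p_{j,i+1})$ is precisely the $v_{j,i}$-edge of $\Gamma_j$, so identifying vertices of $\Gamma$ with fixed points of $M$ realises Definition \ref{d12}: the multiset of weights at each $p_{j,i}$ is $\{v_{j,i},-v_{j,i-1}\}$ as required, and each edge $e$ of $\Gamma$ corresponds to a $w(e)$-sphere whose endpoints lie in the same component of $M^{\ker w(e)}$. Hence $\Gamma$ describes $M$.

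The only non-routine point is verifying that equivariant connected sum along free orbits preserves the describing family, which is what makes the inductive step go through; I expect this to be essentially immediate from the construction in Lemma \ref{tl32}, since free orbits are disjoint from fixed points and from all isotropy spheres (isotropy spheres contain fixed points). Beyond that, both statements are direct assemblies of Theorem \ref{tt31}, Lemma \ref{tl32}, and Lemma \ref{tl21}.
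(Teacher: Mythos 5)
Your proposal is correct and follows essentially the same route as the paper: Theorem \ref{tt31} for each admissible multi-fan, Lemma \ref{tl32} to assemble them by equivariant connected sum along free orbits (the paper does all $m$ at once rather than inductively, but this is cosmetic), and Lemma \ref{tl21} to deduce the graph statement from the multi-fan statement. Your added remark that free orbits are disjoint from fixed points and isotropy spheres, so the describing data is preserved, is exactly the point the paper leaves implicit.
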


\begin{proof}
Since $V_j$ is admissible, by Theorem \ref{tt31}, there exists a 4-dimensional almost complex torus manifold $M_j$ described by $V_j$, $1 \leq j \leq m$. By Lemma \ref{tl32}, we can take an equivariant connected sum along free orbits of $M_j$ to construct a 4-dimensional almost complex torus manifold $M$; then $\Delta$ describes $M$.

The second claim follows from the first claim and Lemma~\ref{tl21}.
\end{proof}

In particular, a family of minimal multi-fans describes some almost complex torus manifold.

\begin{pro} \label{tp34}
\begin{enumerate}
\item Let $\Delta$ be a family of minimal multi-fans $V_1$, $\cdots$, $V_m$. Then there exists a 4-dimensional almost complex torus manifold described by $\Delta$.
\item Let $\Gamma$ be a minimal graph. Then there exists a 4-dimensional almost complex torus manifold described by $\Gamma$.
\end{enumerate}
\end{pro}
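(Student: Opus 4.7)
The proposition is a direct corollary of Theorem \ref{tt33} once we verify that the minimality condition implies admissibility, so the plan is entirely definition-chasing. I would organize it as follows.

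First, I would prove the auxiliary claim that every minimal multi-fan is admissible. Let $V=\{v_1,\ldots,v_k\}$ be minimal, so $k=4s$ and up to cyclic reindexing the vectors cycle through $(1,0)$, $(0,a)$, $(-1,0)$, $(0,-a)$ on each block of four, for some $a\in\{-1,1\}$ (which may depend on the block). For condition (1) of Definition \ref{td12}, I need to check that any two consecutive vectors form a $\mathbb{Z}^2$-basis; this is immediate because every consecutive pair has the form $\{(\pm 1,0),(0,\pm 1)\}$ in some order, and each such pair has determinant $\pm 1$. For condition (2), I would show that one can take $a_i=0$ for all $i$: in each block the definition gives $v_{4j+3}=-v_{4j+1}$ and $v_{4j+4}=-v_{4j+2}$, and at the boundary between consecutive blocks one checks $v_{4j+5}=(1,0)=-v_{4j+3}$ and $v_{4j+6}=(0,a')=-v_{4j+4}$ when $a'=-a$ (the generic case), while if the ``sign'' $a$ happens to agree across a block-boundary we get the same relation by inspection. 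In every case $v_{i+1}=-v_{i-1}$, so condition (2) holds with $a_i=0$.

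Given this, part (1) is immediate: if $\Delta=\{V_1,\ldots,V_m\}$ is a family of minimal multi-fans, each $V_j$ is admissible by the claim above, so $\Delta$ is a family of admissible multi-fans and Theorem \ref{tt33}(1) produces a 4-dimensional almost complex torus manifold described by $\Delta$.

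For part (2), let $\Gamma$ be a minimal graph. By Definition \ref{td111}, each component of $\Gamma$ admits a labelling of successive vertices whose edge-labels cycle through $(1,0), (0,a), (-1,0), (0,-a)$; under the correspondence of Lemma \ref{tl21}, this component corresponds exactly to a minimal multi-fan in the sense of Definition \ref{td110}. Hence $\Gamma$ corresponds under Lemma \ref{tl21} to a family $\Delta$ of minimal multi-fans, and by the argument above $\Gamma$ is admissible. Theorem \ref{tt33}(2) then yields a 4-dimensional almost complex torus manifold described by $\Gamma$.

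There is no real obstacle here; the only thing to be careful with is that the parameter $a\in\{-1,1\}$ in Definitions \ref{td110}--\ref{td111} is allowed to vary from block to block, so when verifying the three-term relation across block boundaries one should check both possible sign choices rather than assuming the sign is globally constant. Once that is handled, the proposition reduces cleanly to Theorem \ref{tt33}.
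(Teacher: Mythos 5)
Your overall strategy is the same as the paper's: show that minimality forces admissibility and then invoke Theorem \ref{tt33} (for part (2), passing through Lemma \ref{tl21}). However, your verification of condition (2) of Definition \ref{td12} at block boundaries contains a sign error that turns into a false claim. With $v_{4j+4}=(0,-a)$ and $v_{4j+6}=(0,a')$, the relation $v_{4j+6}=-v_{4j+4}$ holds precisely when $a'=a$, not when $a'=-a$ as you assert. Worse, in the case $a'=-a$ the three-term relation fails outright: writing $v_{4j+6}=-a_i v_{4j+5}-v_{4j+4}$ forces $a_i=0$ from the first coordinates and then $a'=a$ from the second, so no integer $a_i$ exists when the signs disagree. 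Your closing remark, which insists that one must ``check both possible sign choices'' and that both work, is therefore the one place where the argument is actually wrong rather than merely cautious.

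The reason the proposition nevertheless holds is that the case $a'=-a$ cannot occur. Definition \ref{td11} requires the entire sequence $v_1,\ldots,v_k,v_1$ to be in counterclockwise order or in clockwise order, with one fixed orientation for the whole multi-fan; for unit vectors of $\mathbb{Z}^2$ (and with consecutive vectors forming a basis, so never equal or antipodal) this forces each $v_{i+1}$ to be the $90^\circ$ rotation of $v_i$ in that fixed direction, hence $v_{i+1}=-v_{i-1}$ for every $i$ and a single $a$ for all blocks --- which is exactly how the paper's (one-line) argument runs, and is also what the paper's reformulation of Definition \ref{td110} asserts (``for some $a$, for all $j$''). So the fix is not to check both sign choices but to observe that the multi-fan condition rules one of them out. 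With that correction the rest of your reduction to Theorem \ref{tt33} and Lemma \ref{tl21} is fine.
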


\begin{proof}
Let $V_j=\{v_{j,1},\cdots,v_{j,k_j}\}$, $1 \leq j \leq m$. For each $j$, since $V_j$ is a multi-fan, $v_{j,1}$, $\cdots$, $v_{j,k_j}$, $v_{j,1}$ are in counterclockwise order or in clockwise order; moreover, since $V_j$ is minimal, each $v_{j,i}$ is a unit vector in $\mathbb{Z}^2$ (Definition \ref{td110}). These imply that $v_{j,i+1}=-v_{j,i-1}$ for $1 \leq i \leq k_j$ (Here, $v_{j,0}:=v_{j,k}$ and $v_{j,k_j+1}:=v_{j,1}$). Hence each $V_j$ is admissible. Therefore, (1) follows from Theorem \ref{tt33}. 

We prove (2). Let $\Gamma$ be a minimal graph. Then $\Gamma$ is admissible. By Lemma \ref{tl21}, $\Gamma$ gives rise to a family $\Delta$ of admissible multi-fans, which are minimal. By (1), $\Delta$ describes some 4-dimensional almost complex torus manifold $M$; thus, $\Gamma$ also describes $M$. \end{proof}

We have found a necessary and sufficient condition for a family of multi-fans and a graph describing a 4-dimensional almost complex torus manifold, which proves Theorem \ref{tt19}.

\begin{proof}[\textbf{Proof of Theorem \ref{tt19}}]
By Proposition \ref{tp24}, there exist an admissible graph and a family of admissible multi-fans describing $M$. The rest of the theorem follows from Theorem~\ref{tt33}.
\end{proof}

We have further established the following relationships.
\begin{center}
$\{\textrm{4-dimensional almost complex torus manifolds}\} \to \{\textrm{admissible graphs}\} \iff \{\textrm{families of admissible multi-fans}\} \to \{\textrm{4-dimensional almost complex torus manifolds}\}$
\end{center}

\section{Blow up and down of manifold, graph, and multi-fan} \label{s5.2}

In this section, we describe equivariant blow up and blow down of a 4-dimensional almost complex torus manifold and prove Proposition \ref{tp43}, which demonstrates that blow up and blow down of the manifold, its family of multi-fans, and its graph all correspond to each other.

Let $k \in \{1,2\}$. Let a torus $T^k$ act on $\mathbb{C}^2$ by
\begin{center}
$g \cdot (z_1,z_2)=(g^{w_1} z_1,g^{w_2} z_2)$
\end{center}
for all $g \in T^k \subset \mathbb{C}^k$, for some non-zero $w_1$ and $w_2$ in $\mathbb{Z}^k$ such that $w_1 \neq w_2$. The origin $(0,0)$ is a fixed point of the action. The \textbf{blow up} of the origin is an operation that replaces the origin in $\mathbb{C}^2$ by the set of complex straight lines passing through it. We can describe the blown up space $\widetilde{\mathbb{C}^2}$ as
\begin{center}
$\widetilde{\mathbb{C}^2}=\{(z,l) \, | \, z \in l\} \subset \mathbb{C}^2 \times \mathbb{CP}^1$.
\end{center}
Alternatively, we can describe $\widetilde{\mathbb{C}^2}$ using the equation
\begin{center}
$\widetilde{\mathbb{C}^2}=\{((z_1,z_2),[y_1:y_2]) \, | \, y_1 z_2=y_2 z_1\}$.
\end{center}
The $T^k$-action on $\mathbb{C}^2$ naturally extends to a $T^k$-action on $\widetilde{\mathbb{C}^2}$ by
\begin{center}
$g \cdot ((z_1,z_2),[y_1:y_2])=((g^{w_1}z_1,g^{w_2} z_2), [g^{w_1}y_1:g^{w_2} y_2])$
\end{center}
for all $g \in T^k$. This action of $T^k$ on $\widetilde{\mathbb{C}^2}$ has two fixed points $q'=((0,0),[1:0])$ and $q''=((0,0),[0:1])$. Using local coordinates $(z_1,\frac{y_2}{y_1})$ near $q'$, the weights at $q'$ are $\{w_1,w_2-w_1\}$. Similarly, using local coordinates $(z_2,\frac{y_1}{y_2})$, the weights at $q''$ are $\{w_2,w_1-w_2\}$. The blown up space $\widetilde{\mathbb{C}^2}$ is the holomorphic line bundle $\mathcal{O}(-1)$ over $\mathbb{CP}^1$, whose self intersection number of the zero section is $-1$.

The \textbf{blow down} is the reverse operation $\pi:\widetilde{\mathbb{C}^2} \to \mathbb{C}^2$ of the blow up of the origin in $\mathbb{C}^2$. We shall show that for an action of a torus $T^k$ on a 4-dimensional almost complex manifold $M$, we can equivariantly blow down a 2-sphere $\mathbb{CP}^1$ if two fixed points $p'$ and $p''$ have weights $\{w_1,w_2-w_1\}$ and $\{w_2,w_1-w_2\}$, respectively, and the $T^k$-action on $M$ restricts to act on this $\mathbb{CP}^1$ by 
\begin{center}
$g \cdot [y_1:y_2]=[g^{w_1}y_1:g^{w_2}y_2]$, 
\end{center}
fixing $p_1$ and $p_2$ with weights $w_2-w_1$ and $w_1-w_2$; in other words, $(p_1,p_2)$ is $(w_2-w_1)$-sphere. We can blow down this $2$-sphere, since it has self-intersection number $-1$.

Let a torus $T^k$ act on a 4-dimensional almost complex manifold $M$. Suppose that an isolated fixed point $p$ has weights $\{w_1,w_2\}$ for some non-zero $w_1,w_2 \in \mathbb{Z}^k$ such that $w_1 \neq w_2$. 
The tangent space $T_pM$ of $M$ at $p$ is identified with $\mathbb{C}^2$ on which the $T^k$ acts by $g \cdot (z_1,z_2)=(g^{w_1}z_1,g^{w_2}z_2)$ for all $g \in T^k$ and $(z_1,z_2) \in \mathbb{C}^2$. 
Suppose that the almost complex structure is integrable near $p$.
By the local linearization theorem, a neighborhood of $p$ is $T^k$-equivariantly biholomorphic to a neighborhood of 0 in $T_pM \simeq \mathbb{C}^2$ with this action. 
Therefore, we can equivariantly blow up $p$ by identifying a neighborhood of $p$ with a neighborhood of the origin in $\mathbb{C}^2$, on which $T^k$ acts by $g \cdot (z_1,z_2)=(g^{w_1} z_1,g^{w_2} z_2)$. The blow up replaces the fixed point $p$ with a 2-sphere $\mathbb{CP}^1=\{[y_1:y_2]\}$, and a neighborhood of this 2-sphere in the blown up manifold is equivariantly biholomorphic to $\widetilde{\mathbb{C}^2}$ with an action
\begin{center}
$g \cdot ((z_1,z_2),[y_1:y_2])=((g^{w_1}z_1,g^{w_2} z_2), [g^{w_1}y_1:g^{w_2} y_2])$.
\end{center}
In particular, $T^k$ acts on this 2-sphere $\mathbb{CP}^1=\{((0,0),([y_1:y_2])\}$ by
\begin{center}
$g \cdot ((0,0),[y_1:y_2])=((0,0),[g^{w_1} y_1:g^{w_2}y_2])$ 
\end{center}
with fixed points $p'=((0,0),[1:0])$ and $p''=((0,0),[0:1])$, which have weights $w_2-w_1$ and $w_1-w_2$ for this action, respectively. Let $\widetilde{M}$ denote the blown up manifold. Then $(p',p'')$ is $(w_2-w_1)$-sphere in $\widetilde{M}$ (see Definition \ref{tsphere}). The $T^k$-action on $M$ extends to act on $\widetilde{M}$ so that the weights at $p'$ and $p''$ are $\{w_1,w_2-w_1\}$ and $\{w_2,w_1-w_2\}$, respectively. 

Suppose further that $M$ is compact and the fixed point set of $M$ is discrete. If a graph $\Gamma$ describes $M$, then $\Gamma'$ describes the blown up manifold $M'$, where $\Gamma'$ is obtained from $\Gamma$ by blowing up $p$, which replaces the vertex $p$ with $(p',p'')$-edge with label $(w_2-w_1)$. Therefore, the following lemma holds.

\begin{lem} \emph{\textbf{[Blow up]}} \label{tl41} Let a torus $T^k$ act effectively on a 4-dimensional almost complex manifold $M$. Suppose that a fixed point $p$ has weights $\{w_1,w_2\}$ for some non-zero $w_1$ and $w_2$ in $\mathbb{Z}^k$ such that $w_1 \neq w_2$.
Suppose that the almost complex structure is integrable near $p$.
Then we can equivariantly blow up $p$ to construct another 4-dimensional almost complex manifold $M'$ equipped with a $T^k$ action in which $p$ is replaced with $(w_2-w_1)$-sphere $(p',p'')$. The weights at $p'$ and $p''$ are $\{w_1,w_2-w_1\}$ and $\{w_2,w_1-w_2\}$, respectively. 

Suppose that $M$ is compact and has a discrete fixed point set. Let $\Gamma$ be a multigraph describing $M$, and let $\Gamma'$ be a multigraph obtained from $\Gamma$ by blowing up $p$. Then $\Gamma'$ describes $M'$.
\end{lem}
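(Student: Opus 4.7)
The plan is to reduce to the local model and then invoke the standard blow up of $\mathbb{C}^2$ at the origin. Since the almost complex structure is integrable near $p$ and the torus preserves it, I would invoke the equivariant local linearization theorem to obtain a $T^k$-equivariant biholomorphism between a neighborhood $U$ of $p$ in $M$ and a neighborhood $V$ of the origin in $T_pM \simeq \mathbb{C}^2$, on which $T^k$ acts linearly by $g \cdot (z_1,z_2)=(g^{w_1}z_1,g^{w_2}z_2)$. Because $w_1, w_2$ are non-zero and distinct, the origin is an isolated fixed point of this linear action, and the equivariant blow up $\widetilde{V} \subset V \times \mathbb{CP}^1$ is well defined with the extended $T^k$-action already spelled out in the paragraphs preceding the lemma. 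To construct $M'$, I would excise $p$ from $M$ and glue $\widetilde{V}$ onto $M \setminus \{p\}$ along $V \setminus \{0\}$ via this equivariant biholomorphism. The complex structure on $\widetilde{V}$ inherited from $\mathbb{C}^2 \times \mathbb{CP}^1$ is $T^k$-invariant and agrees with the almost complex structure of $M$ on the overlap $V \setminus \{0\}$, and so the two patch together to a $T^k$-invariant almost complex structure on $M'$.

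Next I would read off the weights and verify the sphere condition from explicit coordinate charts on $\widetilde{V}$. In the affine chart $(z_1, u := y_2/y_1)$ around $p' = ((0,0),[1:0])$ the action becomes $g \cdot (z_1,u) = (g^{w_1}z_1, g^{w_2-w_1}u)$, giving weights $\{w_1, w_2-w_1\}$ at $p'$; the symmetric chart $(z_2, y_1/y_2)$ around $p'' = ((0,0),[0:1])$ yields weights $\{w_2, w_1-w_2\}$ at $p''$. Restricting the action to the exceptional fibre $\{((0,0),[y_1:y_2])\}$ and matching homogeneous coordinates with Definition \ref{tsphere}, the pair $(p', p'')$ is seen to be $(w_2 - w_1)$-sphere.

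For the multigraph claim, I assume $M$ is compact with isolated fixed points and that $\Gamma$ describes $M$ in the sense of Definition \ref{d12}. The surgery is supported in $U$, so every vertex of $\Gamma$ other than $p$ and every edge not incident to $p$ persists unchanged in $\Gamma'$. By Proposition \ref{tp23} each edge $e$ incident to $p$ corresponds to an isotropy $2$-sphere through $p$; in the linearising chart this sphere is either the $z_1$-axis (when the label of $e$ is $\pm w_1$) or the $z_2$-axis (when the label is $\pm w_2$), and its proper transform in $\widetilde{V}$ meets the exceptional divisor at $p'$ or at $p''$ respectively. Redirecting each such edge to the corresponding new vertex and appending the edge $(p',p'')$ with label $w_2 - w_1$ produced in the previous step yields exactly the combinatorial blow up $\Gamma'$ of $p$. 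The three conditions of Definition \ref{d12} are then immediate: the vertex set equals $(M')^{T^k}$, the weight multiset at every vertex matches the computations above, and each edge lies in the corresponding isotropy $\mathbb{CP}^1$.

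The main delicate point is the gluing step: one must confirm that the equivariant local linearization can genuinely be carried out as a $T^k$-equivariant biholomorphism on a neighborhood of $p$ (this is where integrability near $p$ is essential), and that the resulting blown up complex structure on $\widetilde{V}$ extends $T^k$-equivariantly across the seam to the ambient almost complex structure on $M \setminus \{p\}$. Once this is in hand, the weight computations and the transformation of the graph are direct chart-by-chart verifications.
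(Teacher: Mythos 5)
Your proposal is correct and follows essentially the same route as the paper: equivariant local linearization near $p$ (using integrability), identification with the linear model on $\mathbb{C}^2$, the standard equivariant blow up of the origin glued back along the punctured neighborhood, and chart-by-chart weight computations at $p'$ and $p''$, with the multigraph statement following from the locality of the surgery. Your added detail on how edges incident to $p$ redirect via proper transforms of the coordinate axes is a correct elaboration of what the paper leaves implicit.
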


The next lemma explains when we can blow down a 4-dimensional almost complex $T^k$-manifold.

\begin{lem} \emph{\textbf{[Blow down]}} \label{tl42}
Let a torus $T^k$ act on a 4-dimensional compact almost complex manifold $M$. Suppose that two fixed points $p'$ and $p''$ have weights $\{w_1,w_2-w_1\}$ and $\{w_1-w_2,w_2\}$, respectively, for some non-zero $w_1$ and $w_2$ in $\mathbb{Z}^k$ with $w_1 \neq w_2$, and that $(p',p'')$ is $(w_2-w_1)$-sphere. Assume that the almost complex structure is integrable near the $(w_2-w_1)$-sphere $(p',p'')$. Then we can equivariantly blow down the 2-sphere $(p',p'')$ to construct another 4-dimensional compact almost complex manifold $M'$ equipped with a $T^k$-action, in which the $(w_2-w_1)$-sphere $(p',p'')$ is shrunken to a fixed point $p$ that has weights $\{w_1,w_2\}$. \end{lem}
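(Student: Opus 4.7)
The plan is to identify a $T^k$-invariant open neighborhood of the $(w_2-w_1)$-sphere $F := (p',p'')$ in $M$ with a $T^k$-invariant neighborhood of the zero section in $\widetilde{\mathbb{C}^2}$, and then collapse $F$ using the standard blow-down map $\pi : \widetilde{\mathbb{C}^2} \to \mathbb{C}^2$. The first step is to compute the self-intersection number of $F$: applying Lemma \ref{tl23} to $(p',p'')$ viewed as a $(w_2-w_1)$-sphere with remaining weight $w_1$ at $p'$ and remaining weight $w_2$ at $p''$, the relation $w_1 = w_2 + a(w_2-w_1)$ forces $a = -1$. Hence the normal bundle of $F$ has degree $-1$, and the weights of the $T^k$-action at the two torus-fixed points on $F$ pin it down, up to $T^k$-equivariant holomorphic isomorphism, as the line bundle $\mathcal{O}(-1) = \widetilde{\mathbb{C}^2}$ carrying the standard $T^k$-action of weights $w_1, w_2$ described just before the statement of the lemma.

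Next, using the integrability of the almost complex structure $J$ near $F$, I would invoke an equivariant holomorphic tubular neighborhood theorem to produce a $T^k$-equivariant biholomorphism from a $T^k$-invariant open neighborhood $U$ of $F$ in $M$ onto a $T^k$-invariant open neighborhood $\widetilde{U}$ of the zero section in $\widetilde{\mathbb{C}^2}$. Concretely, the local linearization theorem (already invoked in Lemma \ref{tl41}) provides $T^k$-equivariant biholomorphic charts around $p'$ and $p''$ matching the prescribed weights; these can then be patched along $F$ using the classification of $T^k$-equivariant holomorphic line bundles over $\mathbb{CP}^1$ by degree together with the torus weights at the two fixed points of the base.

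Finally, I would construct $M'$ by removing $U$ from $M$ and gluing in the open set $\pi(\widetilde{U}) \subset \mathbb{C}^2$ along the biholomorphism $\pi : \widetilde{\mathbb{C}^2} \setminus (\text{zero section}) \to \mathbb{C}^2 \setminus \{0\}$, which is $T^k$-equivariant with respect to the action $g \cdot (z_1, z_2) = (g^{w_1} z_1, g^{w_2} z_2)$ on $\mathbb{C}^2$. The integrable complex structure on the glued-in $\mathbb{C}^2$-chart and the original almost complex structure on $M \setminus F$ agree on the overlap via $\pi$, so they patch to an almost complex structure on $M'$ preserved by $T^k$. Compactness of $M'$ follows from that of $M$ (we replace a relatively compact open set by a smaller one), and the new fixed point $p := 0 \in \mathbb{C}^2 \subset M'$ has weights $\{w_1, w_2\}$, as required.

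The main technical obstacle is the equivariant holomorphic tubular neighborhood step: one must promote the smooth equivariant identification $U \cong \widetilde{U}$ to an equivariant biholomorphism between the integrable complex structures. The key point is that the two local equivariant linearizations at $p'$ and $p''$ are forced to glue, along their overlap over the sphere $F$, into a global holomorphic identification because any $T^k$-equivariant holomorphic line bundle over $\mathbb{CP}^1$ is determined by its degree and its weights at the two base fixed points; once this global identification of the normal bundle is in hand, standard jet-extension arguments (or the usual Grauert-type linearization of a negative normal bundle neighborhood) promote it to a biholomorphism of an entire neighborhood of $F$.
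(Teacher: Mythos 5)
Your proposal is correct and follows essentially the same route as the paper: identify a $T^k$-invariant neighborhood of the $(w_2-w_1)$-sphere with a neighborhood of the zero section in $\widetilde{\mathbb{C}^2}\cong\mathcal{O}(-1)$ carrying the standard weight-$(w_1,w_2)$ action, then collapse via the blow-down map $\pi$. You merely make explicit two points the paper leaves implicit, namely the computation $a=-1$ via Lemma \ref{tl23} and the Grauert-type argument needed to upgrade the normal-bundle identification to a biholomorphism of a full neighborhood of the $(-1)$-curve.
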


\begin{proof}
Let $F$ denote the $(w_2-w_1)$-sphere $(p',p'')$. By assumption, the almost complex structure is integrable near $F$. A $T^k$-invariant neighborhood of $F$ in $M$ is equivariantly biholomorphic to the normal bundle $NF$ of $F$, and the normal bundle $NF$ is equivariantly biholomorphic to a $T^k$-invariant neighborhood of $\mathbb{CP}^1$ in $\widetilde{\mathbb{C}^2}$, where on $\widetilde{\mathbb{C}^2}$ the torus $T^k$ acts by
\begin{center}
$g \cdot ((z_1,z_2),[y_1:y_2])=((g^{w_1}z_1,g^{w_2} z_2), [g^{w_1}y_1:g^{w_2} y_2])$.
\end{center}
as described in the beginning of this section. Thus, we can equivariantly blow down the 2-sphere $F$ in $M$ containing $p'$ and $p''$ to a fixed point $p$ that has weights $\{w_1,w_2\}$. \end{proof}

We show that blow up and blow down of a 4-dimensional almost complex torus manifold $M$, those of a family of multi-fans describing $M$, and those of an admissible graph describing $M$ all correspond to each other.

\begin{pro} \label{tp43}
Let $M$ be a 4-dimensional almost complex torus manifold, let $\Delta$ be a family of multi-fans describing $M$, and let $\Gamma$ be a graph describing $M$. 
\begin{enumerate}
\item Suppose that the almost complex structure is integrable near a fixed point $p$. Blow up of $p$ in $M$ in the sense of Lemma \ref{tl41}, blow up of $\Delta$, and blow up of $\Gamma$ all correspond to each other.
\item Suppose that the almost complex structure is integrable near $w$-sphere. Blow down of $w$-sphere of $M$ in the sense of Lemma \ref{tl42}, blow down of $\Delta$, and blow down of $\Gamma$ all correspond to each other.
\end{enumerate}
\end{pro}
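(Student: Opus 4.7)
The plan is to reduce each part to Lemma \ref{tl41} (for blow up) and Lemma \ref{tl42} (for blow down), which settle the manifold side, and then verify that the prescribed operations on $\Gamma$ (Definition \ref{td113}) and on $\Delta$ (Definition \ref{td112}) match via the correspondence of Lemma \ref{tl21}. Since $\Gamma$ describes $M$ and the correspondence of Lemma \ref{tl21} is an equivalence of combinatorial data, it suffices in each case to work with $\Gamma$ and to read off the multi-fan consequence.

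For part (1), let $p \in M^{T^2}$ have weights $\{w_1, w_2\}$, with the almost complex structure integrable near $p$. In $\Gamma$, the vertex $p$ has exactly two incident edges; after reversing orientations if necessary, I may assume there is an edge $(p', p)$ with label $a_1$ and an edge $(p, p'')$ with label $a_2$, where $\{-a_1, a_2\} = \{w_1, w_2\}$. Lemma \ref{tl41} then produces the equivariant blow up $M'$ in which $p$ is replaced by an $(a_1 + a_2)$-sphere $(p_1, p_2)$ with weights $\{-a_1, a_1 + a_2\}$ and $\{-(a_1 + a_2), a_2\}$; it further states that the graph $\Gamma'$ obtained from $\Gamma$ by the operation of Definition \ref{td113}(1) describes $M'$. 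Under Lemma \ref{tl21}, the vertex $p$ is some $p_{j,i}$ in a cycle of a multi-fan $V_j$ with $a_1 = v_{j,i-1}$ and $a_2 = v_{j,i}$; the blow up of $\Gamma$ at $p$ precisely corresponds to inserting $v_{j,i-1} + v_{j,i}$ between these two vectors in $V_j$, which is exactly Definition \ref{td112}(1). Hence $\Delta'$ describes $M'$.

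For part (2), let $(p', p'')$ be the $w$-sphere with $p'$ of weights $\{w_1, w\}$ and $p''$ of weights $\{-w, w_2\}$ where $w = w_2 - w_1$ by Lemma \ref{tl23}. In $\Gamma$ the edge $(p', p'')$ carries label $w_2 - w_1$; the remaining edge at $p'$ has, up to reorientation, label $-w_1$ ending at $p'$, and the remaining edge at $p''$ has label $w_2$ starting at $p''$. This realizes the configuration of Definition \ref{td113}(2), and Lemma \ref{tl42} produces the blown-down manifold $M'$ in which $(p', p'')$ is shrunk to a fixed point $p$ of weights $\{w_1, w_2\}$. Because everything outside a $T^2$-invariant neighborhood of the blown-down sphere is preserved, the edges and isotropy spheres of $\Gamma$ not incident to $(p', p'')$ carry over unchanged, and the graph $\Gamma'$ of Definition \ref{td113}(2) describes $M'$. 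Under Lemma \ref{tl21}, this graph operation deletes the middle vector $v_{j,i+1} = v_{j,i} + v_{j,i+2}$ from the relevant multi-fan in $\Delta$, which matches Definition \ref{td112}(2), so $\Delta'$ describes $M'$.

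The main obstacle is bookkeeping on edge orientations: Definition \ref{td113} fixes specific directions on the edges participating in the operation, while Lemmas \ref{tl41} and \ref{tl42} are stated for unordered multisets of weights, and reversing an edge of $\Gamma$ corresponds to reversing the cyclic order of a multi-fan component in $\Delta$. Once one checks, using admissibility, that these reorientations are always harmless and that the labels match as above, the full correspondence reduces to direct translation via Lemma \ref{tl21}.
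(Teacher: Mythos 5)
Your proposal is correct and follows essentially the same route as the paper: set up the local weight configuration at the fixed point (resp.\ along the $w$-sphere), invoke Lemma \ref{tl41} (resp.\ Lemma \ref{tl42}) for the manifold-level operation, check that the resulting weights and isotropy spheres are exactly those recorded by the graph operation of Definition \ref{td113}, and transport to the multi-fan via Lemma \ref{tl21}. The only quibble is that your appeal to Lemma \ref{tl23} in part (2) to get $w=w_2-w_1$ is really just restating the hypothesis of Lemma \ref{tl42} (it holds because the self-intersection number is $-1$), but this does not affect the argument.
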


\begin{proof}

\begin{figure}
\centering
\begin{subfigure}[b][4.8cm][s]{.5\textwidth}
\centering
\begin{tikzpicture}
\draw[->] (-1, 0) -- (3, 0) node[right] {$x$};
\draw[->] (0, -1) -- (0, 3) node[above] {$y$};
\draw[->] (0, 0) -- (2, 1) node[right] {$w_{i-1}$};
\draw[->] (0, 0) -- (1, 1) node[above] {$w_i$};
\end{tikzpicture}
\caption{Before: $\Delta$}\label{tfig2-1}
\end{subfigure}
\begin{subfigure}[b][4.8cm][s]{.4\textwidth}
\centering
\begin{tikzpicture}
\draw[->] (-1, 0) -- (3, 0) node[right] {$x$};
\draw[->] (0, -1) -- (0, 3) node[above] {$y$};
\draw[->] (0, 0) -- (2, 1) node[right] {$w_{i-1}$};
\draw[->] (0, 0) -- (1, 1) node[above] {$w_i$};
\draw[->] (0, 0) -- (3, 2) node[above] {$w_{i-1}+w_{i}$};
\end{tikzpicture}
\caption{Blow up $(w_{i-1},w_i)$: $\Delta'$}\label{tfig2-2}
\end{subfigure}\qquad
\caption{Blow up of a multi-fan}\label{tfig2}
\end{figure}

\begin{figure}
\centering
\begin{subfigure}[b][6.8cm][s]{.4\textwidth}
\centering
\vfill
\begin{tikzpicture}[state/.style ={circle, draw}]
\node[state] (a) {$p_{i-1}$};
\node[state] (b) [above right=of a] {$p_i$};
\node[state] (c) [above left=of b] {$p_{i+1}$};
\path (a) [->] edge node[right] {$w_{i-1}$} (b);
\path (b) [->] edge node [right] {$w_i$} (c);
\end{tikzpicture}
\vfill
\caption{Before: $\Gamma$}\label{tfig3-1}
\end{subfigure}
\begin{subfigure}[b][6.8cm][s]{.4\textwidth}
\centering
\begin{tikzpicture}[state/.style ={circle, draw}]
\node[state] (a) {$p_{i-1}$};
\node[state] (b) [above right=of a] {$p_i'$};
\node[state] (c) [above=of b] {$p_i''$};
\node[state] (d) [above left=of c] {$p_{i+1}$};
\path (a) [->] edge node[right] {$w_{i-1}$} (b);
\path (b) [->] edge node [left] {$w_i+w_{i-1}$} (c);
\path (c) [->] edge node [left] {$w_i$} (d);
\end{tikzpicture}
\caption{Blow up $p_i$; $\Gamma'$}\label{tfig3-2}
\end{subfigure}\qquad
\caption{Blow up of a graph}\label{tfig3}
\end{figure}
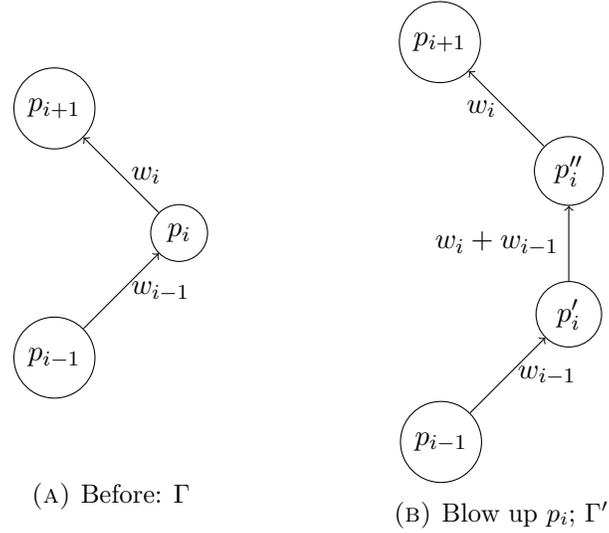

First, we prove (1). Suppose that a fixed point $p_i \in M^{T^2}$ has weights $\{-w_{i-1},w_i\}$. Since $\Gamma$ describes $M$, this means that $(p_{i-1},p_i)$ is $w_{i-1}$-edge and $(p_i,p_{i+1})$ is $w_i$-edge for some vertices (fixed points) $p_{i-1}$ and $p_{i+1}$ of some component $\Gamma_j$ of $\Gamma$. Let $V_j$ be a multi-fan corresponding to $\Gamma_j$ in the sense of Lemma \ref{tl21}.

We blow up $p_i$ in the sense of Lemma \ref{tl41}. Blowing up $p_i$ in $M$ in the sense of Lemma \ref{tl41} replaces $p_i$ with $(w_{i-1}+w_{i})$-sphere $(p_i',p_i'')$. Let $M'$ be the blown up manifold, let $\Delta'$ (Figure \ref{tfig2-2}) be a family of multi-fans obtained from $\Delta$ (Figure \ref{tfig2-1}) by blowing up $(w_{i-1},w_i)$ in the sense of Definition \ref{td112}, and let $\Gamma'$ (Figure \ref{tfig3-2}) be a graph obtained from $\Gamma$ (Figure \ref{tfig3-1}) by blowing up of $p_i$ in the sense of Definition \ref{td113}, respectively. Then $\Delta'$ and $\Gamma'$ each describe $M'$. This proves (1).

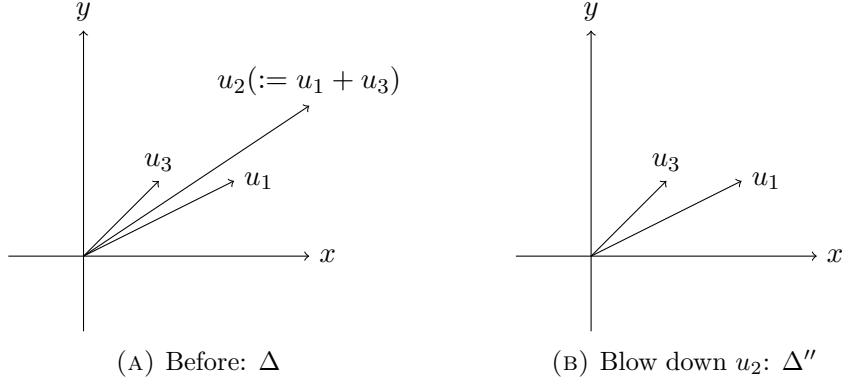
\begin{figure}
\centering
\begin{subfigure}[b][5cm][s]{.4\textwidth}
\centering
\begin{tikzpicture}
\draw[->] (-1, 0) -- (3, 0) node[right] {$x$};
\draw[->] (0, -1) -- (0, 3) node[above] {$y$};
\draw[->] (0, 0) -- (2, 1) node[right] {$u_1$};
\draw[->] (0, 0) -- (1, 1) node[above] {$u_3$};
\draw[->] (0, 0) -- (3, 2) node[above] {$u_2(:=u_1+u_3)$};
\end{tikzpicture}
\caption{Before: $\Delta$}\label{tfig4-1}
\end{subfigure}\qquad
\begin{subfigure}[b][5cm][s]{.5\textwidth}
\centering
\begin{tikzpicture}
\draw[->] (-1, 0) -- (3, 0) node[right] {$x$};
\draw[->] (0, -1) -- (0, 3) node[above] {$y$};
\draw[->] (0, 0) -- (2, 1) node[right] {$u_1$};
\draw[->] (0, 0) -- (1, 1) node[above] {$u_3$};
\end{tikzpicture}
\caption{Blow down $u_2$: $\Delta''$}\label{tfig4-2}
\end{subfigure}
\caption{Blow down of a multi-fan}\label{tfig4}
\end{figure}

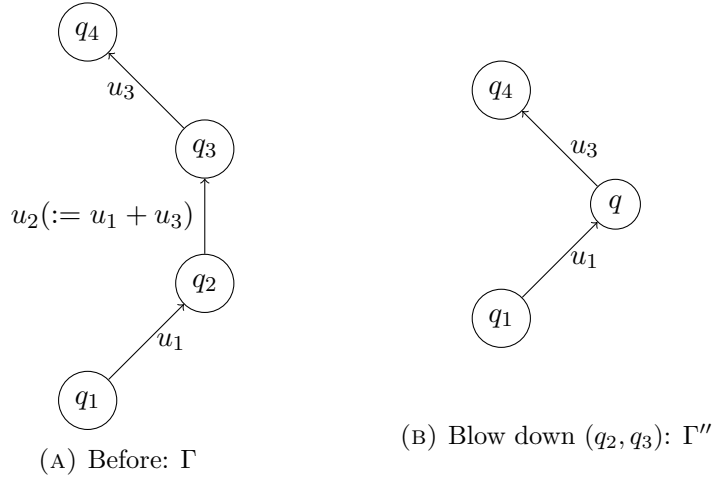
\begin{figure}
\centering
\begin{subfigure}[b][6cm][s]{.4\textwidth}
\centering
\begin{tikzpicture}[state/.style ={circle, draw}]
\node[state] (a) {$q_1$};
\node[state] (b) [above right=of a] {$q_2$};
\node[state] (c) [above=of b] {$q_3$};
\node[state] (d) [above left=of c] {$q_4$};
\path (a) [->] edge node[right] {$u_1$} (b);
\path (b) [->] edge node [left] {$u_2(:=u_1+u_3)$} (c);
\path (c) [->] edge node [left] {$u_3$} (d);
\end{tikzpicture}
\caption{Before: $\Gamma$}\label{tfig5-1}
\end{subfigure}\qquad
\begin{subfigure}[b][6cm][s]{.4\textwidth}
\centering
\vfill
\begin{tikzpicture}[state/.style ={circle, draw}]
\node[state] (a) {$q_1$};
\node[state] (b) [above right=of a] {$q$};
\node[state] (c) [above left=of b] {$q_4$};
\path (a) [->] edge node[right] {$u_1$} (b);
\path (b) [->] edge node [right] {$u_3$} (c);
\end{tikzpicture}
\vfill
\caption{Blow down $(q_2, q_3)$: $\Gamma''$}\label{tfig5-2}
\end{subfigure}
\caption{Blow down of a graph}\label{tfig5}
\end{figure}

Next, we prove (2). To do so, suppose that a fixed point $q_2$ has weights $\{-u_1,u_2\}$, a fixed point $q_3$ has weights $\{-u_2,u_3\}$, and $(q_2,q_3)$ is $u_2$-sphere, where $u_2=u_1+u_3$. By Lemma \ref{tl42}, we can blow down this $u_2$-sphere $(q_2,q_3)$ to a fixed point $q$ that has weights $\{-u_1,u_3\}$. Let $M''$ be the blown down manifold, let $\Delta''$ (Figure \ref{tfig4-2}) be a family of multi-fans obtained from $\Delta$ (Figure \ref{tfig4-1}) by blowing down of $u_2$ in the sense of Definition \ref{td112}, and let $\Gamma''$ (Figure \ref{tfig5-2}) be a graph obtained from $\Gamma$ (Figure \ref{tfig5-1}) by blowing down of the edge $(q_2,q_3)$ in the sense of Definition \ref{td113}, respectively. Then $\Delta''$ and $\Gamma''$ each describe $M''$. This proves (2). \end{proof}

\section{Proof of Theorem \ref{tt114} and Theorem \ref{tt114a}} \label{s5.3}

In this section, we prove Theorem \ref{tt114} and Theorem \ref{tt114a}. Theorem \ref{tt114} provides a minimal model and operations for each of a family of multi-fans and a graph describing a 4-dimensional almost complex torus manifold. We need the following simple fact.

\begin{lemma} \label{tl51}
Let $v_1$ and $v_2$ be a basis of $\mathbb{Z}^2$ such that $|v_1|<|v_2|$. Then either $|v_2-v_1|<|v_2|$ or $|v_2+v_1|<|v_2|$.
\end{lemma}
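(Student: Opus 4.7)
The plan is to prove the contrapositive: assuming $|v_2-v_1|\ge |v_2|$ and $|v_2+v_1|\ge |v_2|$ simultaneously, I will derive $|v_2|\le |v_1|$, contradicting the hypothesis. The first step is simply to expand
\[
|v_2\pm v_1|^2 = |v_2|^2 + |v_1|^2 \pm 2\,v_1\cdot v_2,
\]
so that both inequalities $|v_2\pm v_1|^2\ge|v_2|^2$ combine into the single bound $|v_1\cdot v_2|\le |v_1|^2/2$.

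Next I would decompose $v_2$ along and perpendicular to $v_1$: write $v_2=\alpha v_1+w$ where $\alpha=(v_1\cdot v_2)/|v_1|^2$ and $w\perp v_1$. Then $|\alpha|\le 1/2$, and the Pythagorean identity gives $|v_2|^2=\alpha^2|v_1|^2+|w|^2$. The basis hypothesis enters here: since $v_1,v_2$ is a $\mathbb{Z}$-basis of $\mathbb{Z}^2$, the determinant $|\det(v_1,v_2)|$ equals $1$; but replacing $v_2$ by $v_2-\alpha v_1=w$ leaves the determinant unchanged and, since $w\perp v_1$, we have $|\det(v_1,w)|=|v_1|\,|w|$, whence $|w|=1/|v_1|$.

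Substituting yields
\[
|v_2|^2 \;=\; \alpha^2|v_1|^2+\frac{1}{|v_1|^2}\;\le\; \frac{|v_1|^2}{4}+\frac{1}{|v_1|^2}.
\]
For $|v_1|^2\ge 2$, the right-hand side is strictly less than $|v_1|^2$ (equivalent to $|v_1|^4>4/3$, which is clear for any integer vector of squared length $\ge 2$), giving $|v_2|<|v_1|$, a contradiction.

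The only remaining case is $|v_1|=1$, where the estimate above degenerates. I would handle it by direct inspection: a unit vector in $\mathbb{Z}^2$ is $\pm e_1$ or $\pm e_2$, and $|v_1\cdot v_2|\le 1/2$ forces the corresponding coordinate of $v_2$ to vanish; the basis condition then forces $v_2$ itself to be a unit vector, giving $|v_2|=1=|v_1|$, again contradicting $|v_1|<|v_2|$. This finite case-check is the only mild obstacle; the bulk of the argument is the clean perpendicular decomposition combined with the determinant identity $|v_1|\,|w|=1$.
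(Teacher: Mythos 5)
Your proof is correct, but it takes a genuinely different route from the paper's. The paper argues via angles: it first claims that one of the two angles $\angle(v_1,v_2)$, $\angle(-v_1,v_2)$ is at most $\pi/4$ (justifying this by asserting that otherwise the fundamental parallelogram of the basis would contain an interior lattice point, since $\max\{|v_1|^2,|v_2|^2\}\geq 2$ — in effect an area estimate, as the parallelogram of a $\mathbb{Z}^2$-basis has area exactly $1$), and then applies the cosine rule to conclude $|v_2\mp v_1|<|v_2|$ in the respective case. You instead take the contrapositive, reduce both failed inequalities to the single bound $|v_1\cdot v_2|\leq |v_1|^2/2$, and then exploit the orthogonal decomposition $v_2=\alpha v_1+w$ together with the determinant identity $|v_1|\,|w|=|\det(v_1,v_2)|=1$ to get the explicit estimate $|v_2|^2\leq |v_1|^2/4+1/|v_1|^2$, which forces $|v_2|<|v_1|$ once $|v_1|^2\geq 2$; the case $|v_1|=1$ you settle by direct inspection. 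Your argument is purely algebraic and fully self-contained, replacing the paper's somewhat terse geometric claim about lattice points with the clean identity $\mathrm{area}=|v_1|\,|w|=1$; the price is the extra case split at $|v_1|=1$, which the paper's angle formulation absorbs automatically. Both proofs are valid; yours is arguably the more rigorous write-up, and in fact the condition $|v_1\cdot v_2|\leq|v_1|^2/2$ you isolate is exactly the Gauss--Lagrange reduction condition for rank-two lattices, so your argument is the standard reduction-theory proof specialized to $\mathbb{Z}^2$.
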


\begin{proof}
We claim that either the angle between $v_1$ and $v_2$ is at most $\pi/4$ or the angle between $-v_1$ and $v_2$ is at most $\pi/4$. To see this, suppose on the contrary that both angles are bigger than $\pi/4$. Since $\max\{|v_1|^2,|v_2|^2\} \geq 2$, a parallelogram formed by $v_1$ and $v_2$ at $(0,0)$ contains a lattice point in its interior, which contradicts the assumption that $v_1$ and $v_2$ form a basis of $\mathbb{Z}^2$.

First, suppose that the angle $\theta$ between $v_1$ and $v_2$ is at most $\pi/4$. Then we have $|v_1|^2 < 2|v_1| \, |v_2| \cos \theta$. By the cosine rule $|v_2-v_1|^2=|v_2|^2+|v_1|^2-2|v_1| \, |v_2|\cos \theta$, it follows that $|v_2-v_1| < |v_2|$.

Second, suppose that the angle $\theta$ between $-v_1$ and $v_2$ is at most $\pi/4$. Then $|-v_1|^2 < 2|-v_1| \, |v_2| \cos \theta$. By the cosine rule $|v_2+v_1|^2=|v_2|^2+|-v_1|^2-2|-v_1| \, |v_2|\cos \theta$, it follows that $|v_2+v_1| < |v_2|$. \end{proof}

We are now ready to prove Theorem \ref{tt114}.

\begin{proof}[\textbf{Proof of Theorem \ref{tt114}}]
Let $(M,J)$ be a 4-dimensional almost complex torus manifold.
By Proposition \ref{tp24}, there exist an admissible graph $\Gamma$ and a family $\Delta$ of admissible multi-fans describing $M$.
Because blow up and blow down of $\Gamma$ and $\Delta$ correspond to each other, it suffices to prove this theorem for graphs.
Since $\Gamma$ describes $M$, by Proposition \ref{tp23}, if $(p,q)$ is $w$-edge of $\Gamma$, then $(p,q)$ is $w$-sphere.

Suppose that $\max\{|w_{pi}|:p \in M^{T^2}, 1 \leq i \leq 2\}=1$. Then $\Delta$ is a family of minimal multi-fans and $\Gamma$ is a minimal graph, and thus this theorem follows.
Therefore, from now on, we suppose that $\max\{|w_{pi}|:p \in M^{T^2}, 1 \leq i \leq 2\}>1$.

Suppose that $(p_1,p_2)$ is $w$-edge, where $|w|=\max\{|w_{pi}|:p \in M^{T^2}, 1 \leq i \leq 2\}>1$. Let $(p',p_1)$ be $w_1$-edge and let $(p_2,p'')$ be $w_2$-edge for some fixed points $p'$ and $p''$, and for some $w_1,w_2 \in \mathbb{Z}^2$. Then $p_1$ has weights $\{-w_1,w\}$ and $p_2$ has weights $\{-w,w_2\}$. 

Suppose that $|w_1|=|w|$. Then $|w_1|=|w|>1$ and hence a parallelogram formed by $w_1$ and $w$ at $(0,0)$ contains a lattice point in its interior. This contradicts that $w_1$ and $w$ form a basis of $\mathbb{Z}^2$. Therefore, $|w_1|<|w|$. Similarly, $|w_2|<|w|$.

Since $(p_1,p_2)$ is $w$-sphere, $p_1$ has weights $\{-w_1,w\}$, and $p_2$ has weights $\{-w,w_2\}$, by Lemma \ref{tl23}, we have $-w_1=w_2+aw$, where $a$ is the self-intersection number of the $w$-sphere $(p_1,p_2)$. Since $|w_1|<|w|$ and $|w_2|<|w|$, the equation $-w_1=w_2+aw$ implies that $-1 \leq a \leq 1$.

\begin{figure}
\centering
\begin{subfigure}[b][6cm][s]{.45\textwidth}
\centering
\vfill
\begin{tikzpicture}[state/.style ={circle, draw}]
\node[state] (a) {$p'$};
\node[state] (b) [above right=of a] {$p_1$};
\node[state] (c) [above=of b] {$p_2$};
\node[state] (d) [above left=of c] {$p''$};
\path (a) [->] edge node[left] {$w_1$} (b);
\path (b) [->] edge node [left] {$w_1+w_2$} (c);
\path (c) [->] edge node [left] {$w_2$} (d);
\end{tikzpicture}
\vfill
\caption{Before: $\Gamma$}\label{tfig6-1}
\end{subfigure}
\begin{subfigure}[b][4.5cm][s]{.45\textwidth}
\centering
\vfill
\begin{tikzpicture}[state/.style ={circle, draw}]
\node[state] (a) {$p'$};
\node[state] (b) [above right=of a] {$p$};
\node[state] (c) [above left=of b] {$p''$};
\path (a) [->] edge node[right] {$w_1$} (b);
\path (b) [->] edge node [right] {$w_2$} (c);
\end{tikzpicture}
\vfill
\caption{Blow down $(p_1,p_2)$ in $M$ to $M'$: $\Gamma'$}\label{tfig6-2}
\end{subfigure}\qquad
\caption{Case (1): $a=-1$}\label{tfig6}
\end{figure}

First, suppose that $a=-1$. We have that $p_1$ has weights $\{-w_1,w_1+w_2\}$, $p_2$ has weights $\{-w_1-w_2,w_2\}$, and $(p_1,p_2)$ is $(w_1+w_2)$-sphere; see Figure \ref{tfig6-1}. 
Let $\Gamma'$ be a graph obtained from $\Gamma$ by blow down the $(p_1,p_2)$-edge; see Figure \ref{tfig6-2}.
Since $\Gamma'$ is admissible, by Theorem~\ref{tt33}, there is a 4-dimensional almost complex torus manifold $M'$ described by $\Gamma'$; in the case that $J$ is integrable near the $(w_1+w_2)$-sphere $(p_1,p_2)$, we take $M'$ to be blow down of the $(w_1+w_2)$-sphere $(p_1,p_2)$; by Proposition~\ref{tp43}, $\Gamma'$ describes $M'$.

Second, suppose that $a=0$. In this case, we have that $w_2=-w_1$, $p_1$ has weights $\{-w_1, w\}$, $p_2$ has weights $\{-w, -w_1\}$, and $(p_1,p_2)$ is $w$-sphere. By Lemma \ref{tl51}, either (i) $|w-w_1|<|w|$ or (ii) $|w+w_1|<|w|$.

\begin{figure}
\centering
\begin{subfigure}[b][5.8cm][s]{.4\textwidth}
\centering
\vfill
\begin{tikzpicture}[state/.style ={circle, draw}]
\node[state] (b) {$p_2$};
\node[state] (c) [below right=of b] {$p_1$};
\node[state] (d) [above right=of c] {$p'$};
\node[state] (a) [above right=of b] {$p''$};
\path (c) [->] edge node [right] {$w$} (b);
\path (d) [->] edge node [right] {$w_1$} (c);
\path (b) [->] edge node [right] {$-w_1$} (a);
\end{tikzpicture}
\vfill
\caption{Before: $\Gamma$}\label{tfig7-1}
\end{subfigure}\qquad
\begin{subfigure}[b][5.8cm][s]{.4\textwidth}
\centering
\begin{tikzpicture}[state/.style ={circle, draw}]
\node[state] (b) {$p_2''$};
\node[state] (c) [below=of b] {$p_2'$};
\node[state] (d) [below right=of c] {$p_1$};
\node[state] (e) [above right=of d] {$p'$};
\node[state] (a) [above right=of b] {$p''$};
\path (c) [->] edge node [right] {$w-w_1$} (b);
\path (d) [->] edge node [left] {$w$} (c);
\path (e) [->] edge node [right] {$w_1$} (d);
\path (b) [->] edge node [right] {$-w_1$} (a);
\end{tikzpicture}
\caption{Blow up $p_2$: $\Gamma_1$}\label{tfig7-2}
\end{subfigure}\qquad
\begin{subfigure}[b][4.5cm][s]{.4\textwidth}
\centering
\vfill
\begin{tikzpicture}[state/.style ={circle, draw}]
\node[state] (b) {$p_2''$};
\node[state] (c) [below right=of b] {$p_1'$};
\node[state] (d) [above right=of c] {$p'$};
\node[state] (a) [above right=of b] {$p''$};
\path (c) [->] edge node [left] {$w-w_1$} (b);
\path (d) [->] edge node [right] {$w_1$} (c);
\path (b) [->] edge node [left] {$-w_1$} (a);
\end{tikzpicture}
\caption{Blow down $(p_1,p_2')$: $\Gamma'$}\label{tfig7-3}
\end{subfigure}\qquad
\caption{Case (2-i): $a=0$ and $|w-w_1|<|w|$}\label{tfig7}
\end{figure}

Assume that (i) $|w-w_1|<|w|$. Figure \ref{tfig7-1} illustrates $\Gamma$.
Let $\Gamma_1$ (Figure \ref{tfig7-2}) be a graph obtained from $\Gamma$ by blow up $p_2$ to $(w-w_1)$-edge $(p_2',p_2'')$; then $\Gamma_1$ is admissible, and hence by Theorem~\ref{tt33}, $\Gamma_1$ describes some 4-dimensional almost complex torus manifold $M_1$.
Next, in $M_1$, we have that $p_1$ has weights $\{-w_1,w\}$, $p_2'$ has weights $\{-w,w-w_1\}$, and $(p_1,p_2')$ is $w$-sphere.
Let $\Gamma'$ (Figure \ref{tfig7-3}) be a graph obtained from $\Gamma_1$ by blow down the $w$-edge $(p_1,p_2')$ to a vertex $p_2''$; then $\Gamma'$ describes some almost complex torus manifold $M'$ by Theorem~\ref{tt33}.
These steps replaced $w$-sphere $(p_1,p_2)$ with $(w-w_1)$-sphere $(p_1',p_2'')$, where $|w-w_1|<|w|$. Figure \ref{tfig7} illustrates these steps.
In the case that $J$ is integrable near each $w$-sphere, we take $M_1$ to be blow up of $p_2$ in $M$ and $M'$ to be blow down of $w$-sphere $(p_1,p_2')$ in $M_1$.

\begin{figure}
\centering
\begin{subfigure}[b][5.8cm][s]{.4\textwidth}
\centering
\vfill
\begin{tikzpicture}[state/.style ={circle, draw}]
\node[state] (b) {$p_2$};
\node[state] (c) [below right=of b] {$p_1$};
\node[state] (d) [above right=of c] {$p'$};
\node[state] (a) [above right=of b] {$p''$};
\path (c) [->] edge node [right] {$w$} (b);
\path (d) [->] edge node [right] {$w_1$} (c);
\path (b) [->] edge node [right] {$-w_1$} (a);
\end{tikzpicture}
\vfill
\caption{Before: $\Gamma$}\label{tfig8-1}
\end{subfigure}\qquad
\begin{subfigure}[b][5.8cm][s]{.4\textwidth}
\centering
\begin{tikzpicture}[state/.style ={circle, draw}]
\node[state] (b) {$p_2$};
\node[state] (c) [below=of b] {$p_1''$};
\node[state] (d) [below right=of c] {$p_1'$};
\node[state] (e) [above right=of d] {$p'$};
\node[state] (a) [above right=of b] {$p''$};
\path (c) [->] edge node [right] {$w$} (b);
\path (d) [->] edge node [left] {$w+w_1$} (c);
\path (e) [->] edge node [right] {$w_1$} (d);
\path (b) [->] edge node [right] {$-w_1$} (a);
\end{tikzpicture}
\caption{Blow up $p_1$: $\Gamma_1$}\label{tfig8-2}
\end{subfigure}\qquad
\begin{subfigure}[b][4.5cm][s]{.4\textwidth}
\centering
\begin{tikzpicture}[state/.style ={circle, draw}]
\node[state] (b) {$p_2'$};
\node[state] (c) [below right=of b] {$p_1'$};
\node[state] (d) [above right=of c] {$p'$};
\node[state] (a) [above right=of b] {$p''$};
\path (c) [->] edge node [left] {$w+w_1$} (b);
\path (d) [->] edge node [right] {$w_1$} (c);
\path (b) [->] edge node [left] {$-w_1$} (a);
\end{tikzpicture}
\caption{Blow down $(p_1'',p_2)$: $\Gamma'$}\label{tfig8-3}
\end{subfigure}\qquad
\caption{Case (2-ii): $a=0$ and $|w+w_1|<|w|$}\label{tfig8}
\end{figure}

Assume that (ii) $|w+w_1|<|w|$. Figure \ref{tfig8-1} illustrates $\Gamma$. Let $\Gamma_1$ (Figure \ref{tfig8-2}) be a graph obtained from $\Gamma$ by blow up $p_1$ to $(w+w_1)$-edge $(p_1',p_1'')$; then $\Gamma_1$ is admissible, and hence by Theorem~\ref{tt33}, $\Gamma_1$ describes some 4-dimensional almost complex torus manifold $M_1$. Next, in $M_1$, we have that $p_1''$ has weights $\{-w-w_1,w\}$, $p_2$ has weights $\{-w,-w_1\}$, and $(p_1'',p_2)$ is $w$-sphere. Let $\Gamma'$ (Figure \ref{tfig8-3}) be a graph obtained from $\Gamma_1$ by blow down the $w$-edge $(p_1'',p_2)$ to a vertex $p_2'$; then $\Gamma'$ describes some almost complex torus manifold $M'$ by Theorem~\ref{tt33}. These steps replaced $w$-sphere $(p_1,p_2)$ with $(w+w_1)$-sphere $(p_1',p_2')$, where $|w+w_1|<|w|$. Figure \ref{tfig8} illustrates these steps.
In the case that $J$ is integrable near each $w$-sphere, we take $M_1$ to be blow up of $p_1$ in $M$ and $M'$ to be blow down of $w$-sphere $(p_1'',p_2)$ in $M_1$.

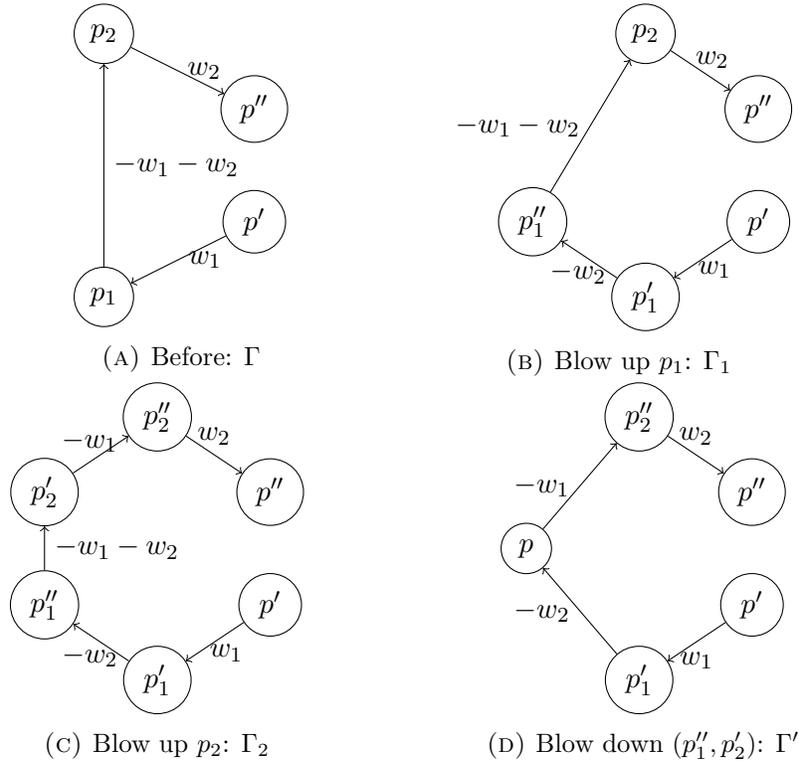
\begin{figure}
\centering
\begin{subfigure}[b][5cm][s]{.45\textwidth}
\centering
\begin{tikzpicture}[state/.style ={circle, draw}]
\node[vertex] (a) at (0,1) {$p_2$};
\node[vertex] (b) at (2,0) {$p''$};
\node[vertex] (c) at (2,-1.5) {$p'$};
\node[vertex] (d) at (0,-2.5) {$p_1$};
\path (a) [->] edge node[right] {$w_2$} (b);
\path (d) [->] edge node [right] {$-w_1-w_2$} (a);
\path (c) [->] edge node [right] {$w_1$} (d);
\end{tikzpicture}
\caption{Before: $\Gamma$}\label{tfig9-1}
\end{subfigure}
\begin{subfigure}[b][5cm][s]{.45\textwidth}
\centering
\begin{tikzpicture}[state/.style ={circle, draw}]
\node[vertex] (a) at (0,1) {$p_2$};
\node[vertex] (b) at (1.5,0) {$p''$};
\node[vertex] (c) at (1.5,-1.5) {$p'$};
\node[vertex] (d) at (0,-2.5) {$p_1'$};
\node[vertex] (e) at (-1.5,-1.5) {$p_1''$};
\path (a) [->] edge node[pos=0.7, above] {$w_2$} (b);
\path (d) [->] edge node [pos=0.7, below] {$-w_2$} (e);
\path (e) [->] edge node [left] {$-w_1-w_2$} (a);
\path (c) [->] edge node [pos=0.3, below] {$w_1$} (d);
\end{tikzpicture}
\caption{Blow up $p_1$: $\Gamma_1$}\label{tfig9-2}
\end{subfigure}\qquad
\begin{subfigure}[b][4.8cm][s]{.45\textwidth}
\centering
\begin{tikzpicture}[state/.style ={circle, draw}]
\node[vertex] (a) at (0,1) {$p_2''$};
\node[vertex] (b) at (1.5,0) {$p''$};
\node[vertex] (c) at (1.5,-1.5) {$p'$};
\node[vertex] (d) at (0,-2.5) {$p_1'$};
\node[vertex] (e) at (-1.5,-1.5) {$p_1''$};
\node[vertex] (f) at (-1.5,0) {$p_2'$};
\path (a) [->] edge node[above] {$w_2$} (b);
\path (d) [->] edge node [pos=0.7, below] {$-w_2$} (e);
\path (e) [->] edge node [right] {$-w_1-w_2$} (f);
\path (c) [->] edge node [pos=0.3, below] {$w_1$} (d);
\path (f) [->] edge node [pos=0.3, above] {$-w_1$} (a);
\end{tikzpicture}
\caption{Blow up $p_2$: $\Gamma_2$}\label{tfig9-3}
\end{subfigure}\qquad
\begin{subfigure}[b][4.8cm][s]{.45\textwidth}
\centering
\begin{tikzpicture}[state/.style ={circle, draw}]
\node[vertex] (a) at (0,1) {$p_2''$};
\node[vertex] (b) at (1.5,0) {$p''$};
\node[vertex] (c) at (1.5,-1.5) {$p'$};
\node[vertex] (d) at (0,-2.5) {$p_1'$};
\node[vertex] (e) at (-1.5,-0.75) {$p$};
\path (a) [->] edge node[above] {$w_2$} (b);
\path (d) [->] edge node [left] {$-w_2$} (e);
\path (c) [->] edge node [below] {$w_1$} (d);
\path (e) [->] edge node [left] {$-w_1$} (a);
\end{tikzpicture}
\caption{Blow down $(p_1'',p_2')$: $\Gamma'$}\label{tfig9-4}
\end{subfigure}\qquad
\caption{Case (3): $a=+1$}\label{tfig9}
\end{figure}

Third, suppose that $a=+1$. In this case, we have that $w=-w_1-w_2$, $p_1$ has weights $\{-w_1, -w_1-w_2\}$, $p_2$ has weights $\{w_1+w_2,w_2\}$, and $(p_1,p_2)$ is $(-w_1-w_2)$-sphere; see Figure \ref{tfig9-1}.

Let $\Gamma_1$ (Figure \ref{tfig9-2}) be a graph obtained from $\Gamma$ by blow up $p_1$ to a $(-w_2)$-sphere $(p_1',p_1'')$; then $\Gamma_1$ is admissible, and hence by Theorem~\ref{tt33}, $\Gamma_1$ describes some 4-dimensional almost complex torus manifold $M_1$.
Next, let $\Gamma_2$ (Figure \ref{tfig9-3}) be a graph obtained from $\Gamma_1$ by blow up $p_2$ to a $w_1$-sphere $(p_2',p_2'')$; since $\Gamma_2$ is admissible, by Theorem~\ref{tt33}, $\Gamma_2$ describes some almost complex torus manifold $M_2$.
In $M_2$ ($\Gamma_2$) we have that $(p_1'',p_2')$ is $(-w_1-w_2)$-sphere, $p_1''$ has weights $\{w_2,-w_1-w_2\}$, and $p_2'$ has weights $\{w_1+w_2, -w_1\}$.
Let $\Gamma'$ (Figure \ref{tfig9-4})  be a graph obtained from $\Gamma_2$ by blow down $(-w_1-w_2)$-edge $(p_1'',p_2')$ to a vertex $p$; then $\Gamma'$ is admissible and describes some almost complex torus manifold $M'$, by Theorem~\ref{tt33}.

In the case that $J$ is integrable near each $w$-sphere, we take $M_1$ to be blow up of $p_1$ in $M$, $M_2$ to be blow up of $p_2$ in $M_1$, and $M'$ to be blow down of $(-w_1-w_2)$-sphere $(p_1'',p_2)$ in $M_2$.

We continue the above arguments. By the steps above, we can reduce the magnitude of every weight to 1. That is, we can blow up and blow down $\Gamma$ to another graph $\Gamma''$ describing almost complex torus manifold $M''$, whose labels are all unit vectors in $\mathbb{Z}^2$. That is, $\Gamma''$ is a minimal graph and all weights at the fixed points of $M''$ are unit vectors.

In the case that $J$ is integrable near each $w$-sphere, blow up (down) of $M$, $\Delta$, and $\Gamma$ all correspond to each other by Proposition~\ref{tp43}, and $M$ and $M'$ in each of the above cases are obtained from each other by blow up and blow down, and hence so are $M$ and $M''$.
\end{proof}

Next, we prove Theorem \ref{tt114a}, which demonstrates the existence of a 4-dimensional almost complex torus described by a family of multi-fans and a graph, each obtained from our minimal model (a family of minimal multi-fans and a minimal graph) and operations (blow up and down).

\begin{proof}[\textbf{Proof of Theorem \ref{tt114a}}]
Let $\Delta$ be a family of multi-fans obtained from a family of minimal multi-fans by blow up and down. Since any minimal multi-fan is admissible and blow up and down preserve admissibility, $\Delta$ is admissible. By Theorem \ref{tt33}, there exists a 4-dimensional almost complex torus manifold $M$ described by $\Delta$.

Let $\Gamma$ be a graph obtained from a minimal graph by blow up and down. Any minimal graph is admissible and blow up and down preserve admissibility. Therefore, $\Gamma$ is admissible. By Lemma \ref{tl21}, $\Gamma$ naturally gives rise to a family $\Delta$ of admissible multi-fans. By Theorem \ref{tt33}, there exists a 4-dimensional almost complex torus manifold $M$ described by $\Delta$, and hence by $\Gamma$.
\end{proof}

\section{Correspondence between complex torus 4-manifolds and admissible fans} \label{s5.4}

In this section, we (1) demonstrate that we can equivariantly blow up and down any 4-dimensional complex torus manifold to $\mathbb{CP}^1 \times \mathbb{CP}^1$ with a particular standard action (Theorem \ref{tt115}), (2) show that our fan determines a 4-dimensional complex torus manifold up to equivariant biholomorphism (Corollary \ref{tc117}), and (3) present explicit blow up and down from $M_1$ to $M_2$, where each $M_i$ is $\mathbb{CP}^2$ or one of the Hirzebruch surfaces with particular $T^2$-actions.

Let $M$ be a 4-dimensional compact connected complex surface. Suppose that the circle group acts on $M$ preserving the complex structure and the fixed point set is non-empty and discrete. In \cite{CHK}, Carrell, Kosniowski, and Howard proved that $M$ is obtained from $\mathbb{CP}^2$ or a Hirzebruch surface by blow up.

\begin{theo} \label{t28} \cite{CHK} Let the circle act on a compact connected complex surface $M$ with a non-empty discrete fixed point set. Then $M$ is obtained from $\mathbb{CP}^2$ or a Hirzebruch surface by blow up of fixed points. \end{theo}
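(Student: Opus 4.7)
The plan is to extend the $S^1$-action to a holomorphic $\mathbb{C}^*$-action, use the Bia{\l}ynicki-Birula decomposition to force $M$ to be rational, and then reduce equivariantly to a minimal model, which must be $\mathbb{CP}^2$ or a Hirzebruch surface.

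First, since $M$ is compact and the $S^1$-action preserves the complex structure, the generating holomorphic vector field exponentiates to a holomorphic $\mathbb{C}^*$-action on $M$ with the same discrete fixed-point set; moreover, a compact complex surface admitting a non-trivial holomorphic $\mathbb{C}^*$-action with fixed points is projective (an application of Moishezon's theorem, or alternatively by exhibiting enough invariant rational curves via the decomposition below). Applying the Bia{\l}ynicki-Birula decomposition then yields a cell decomposition $M = \bigsqcup_i W^+(p_i)$ with $W^+(p_i) \cong \mathbb{C}^{n_i}$, one affine cell per fixed point, where $n_i$ is the number of positive weights at $p_i$. This forces $M$ to be simply connected with cohomology free abelian and concentrated in even degrees, hence $h^{1,0} = h^{2,0} = 0$ and $\kappa(M) = -\infty$. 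By Castelnuovo's rationality criterion, $M$ is a rational surface, and so $M$ is obtained from a minimal rational surface by a finite sequence of ordinary (not yet equivariant) blow-ups at points.

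The main obstacle is upgrading this sequence to an $S^1$-equivariant one. The key observation is that every $(-1)$-curve $E$ on a smooth projective surface is rigid, since its normal bundle is $\mathcal{O}_E(-1)$ with no non-zero global sections; because $\mathbb{C}^*$ is connected, it must preserve every $(-1)$-curve setwise. Any such invariant $(-1)$-curve contains exactly two $\mathbb{C}^*$-fixed points, near which the action linearizes holomorphically, and the weight configuration matches the hypothesis of Lemma \ref{tl42}, so $E$ may be equivariantly contracted to a fixed point. Iterating produces a minimal rational surface $M_0$ still carrying an effective $S^1$-action with non-empty discrete fixed-point set. To conclude, I would identify $M_0$: among all minimal complex surfaces, $\mathbb{CP}^2$ and the Hirzebruch surfaces $\Sigma_n$ ($n \geq 0$, $n \neq 1$) are exactly those admitting such an action, because a minimal ruled surface over a base curve of genus $\geq 1$ has a canonical ruling that must be preserved by $S^1$, and $S^1$ cannot act non-trivially on a positive-genus curve, which forces whole fibers to be fixed and contradicts discreteness of $M^{S^1}$; minimal surfaces of non-negative Kodaira dimension are ruled out at the previous step. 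Reversing the chain of equivariant blow-downs then exhibits $M$ as an iterated blow-up of $\mathbb{CP}^2$ or a Hirzebruch surface at fixed points, as required.
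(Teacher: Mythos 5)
The paper does not prove this statement at all; it is quoted verbatim from Carrell--Howard--Kosniowski [CHK], so there is no in-paper argument to compare yours against. Your strategy --- complexify the $S^1$-action to a holomorphic $\mathbb{C}^*$-action, extract rationality from the Bia{\l}ynicki--Birula decomposition, and then equivariantly contract invariant $(-1)$-curves down to a minimal rational surface --- is the standard modern route and is essentially viable. In particular, the rigidity argument forcing every $(-1)$-curve to be $\mathbb{C}^*$-invariant, the observation that such a curve carries exactly two fixed points whose weights automatically match the hypotheses of Lemma \ref{tl42} (this is Lemma \ref{tl23} with self-intersection $a=-1$), and the termination of the contraction process at a minimal rational surface carrying the residual action are all correct.

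Two steps are not closed as written. First, the Bia{\l}ynicki--Birula decomposition is invoked before projectivity (or at least membership in the K\"ahler/Fujiki class) is established, while projectivity is justified ``via the decomposition below'' --- that is circular. Ruling out the non-K\"ahler surfaces (those with $b_1$ odd) is genuinely where the content of [CHK] lies, and it needs an independent argument, e.g.\ the holomorphic Lefschetz/localization formula at the isolated fixed points, or a case check through the Enriques--Kodaira classes admitting vector fields with isolated zeros. Second, the chain ``cohomology in even degrees $\Rightarrow h^{1,0}=h^{2,0}=0 \Rightarrow \kappa(M)=-\infty \Rightarrow$ Castelnuovo'' does not work: $q=p_g=0$ does not imply $P_2=0$ (simply connected general type surfaces with $p_g=0$ exist, e.g.\ the Barlow surface), so $\kappa(M)=-\infty$ is not a consequence of the cohomological vanishing you derived. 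The repair is easy and bypasses Castelnuovo entirely: since $b_4=1$ there is a unique source, whose plus-cell is a dense open subset isomorphic to $\mathbb{C}^2$, so $M$ is birational to $\mathbb{C}^2$ and hence rational. With those two points filled in, the rest of your argument goes through.
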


For a holomorphic vector field on a complex manifold, Kosniowski proved the following formula, called the Kosniowski formula \cite{Ko}. Hattori and Taniguchi proved the same formula for unitary manifolds \cite{HT}, and Kosniowski and Yahia obtained the same result \cite{KY}. We shall state it for almost complex manifolds.

\begin{theo}[Kosniowski formula] \cite{HT, KY} \label{tt62}
Let the circle act on a compact almost complex manifold $M$. For each connected component $F$ of the fixed point set $M^{S^1}$, let $d(-,F)$ and $d(+,F)$ be the numbers of negative weights and positive weights in the normal bundle $NF$ of $F$, respectively. Then the Hirzebruch $\chi_y$-genus $\chi_y(M)$ of $M$ satisfies
\begin{center}
$\displaystyle \chi_y(M)=\sum_{F \subset M^{S^1}} (-y)^{d(-,F)} \cdot \chi_y(F)=\sum_{F \subset M^{S^1}} (-y)^{d(+,F)} \cdot \chi_y(F)$.
\end{center}
\end{theo}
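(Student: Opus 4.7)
The plan is to derive the formula from the Atiyah--Bott--Segal--Singer equivariant Lefschetz fixed point theorem applied to the Dolbeault-type operators whose indices assemble into $\chi_y(M)$, combined with the automatic rigidity that this approach produces. Recall that for a compact almost complex $2n$-manifold,
$$\chi_y(M) = \sum_{p=0}^{n} y^p \cdot \mathrm{ind}(\bar\partial_p),$$
where $\bar\partial_p$ is the symbol class of the Dolbeault complex twisted by $\Lambda^p T^*M^{1,0}$. Since the $S^1$-action preserves the almost complex structure, each $\bar\partial_p$ lifts $S^1$-equivariantly, producing an equivariant $\chi_y$-genus $\chi_y^{S^1}(M)(t) \in R(S^1)$ whose value at $1 \in S^1$ equals $\chi_y(M)$.

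The first step is to apply equivariant localization, which expresses $\chi_y^{S^1}(M)(t)$ as a sum over connected components $F$ of $M^{S^1}$:
$$\chi_y^{S^1}(M)(t) = \sum_{F} \int_F \prod_a \frac{x_a(1 + y e^{-x_a})}{1 - e^{-x_a}} \cdot \prod_{j=1}^{n - \dim_{\mathbb{C}} F} \frac{1 + y\, t^{-w_{F,j}} e^{-y_j}}{1 - t^{-w_{F,j}} e^{-y_j}},$$
where $x_a$ runs over the Chern roots of $TF$, and $y_j$, $w_{F,j}$ are the equivariant Chern roots and weights of the normal bundle $NF$ of $F$ in $M$.

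The second step is to take the formal limits $t \to 0$ and $t \to \infty$ along a suitably generic circle, so that no weight $w_{F,j}$ vanishes. A direct calculation shows that $\frac{1 + y\, t^{-w} e^{-y_j}}{1 - t^{-w} e^{-y_j}}$ tends to $-y$ when $w > 0$ and to $1$ when $w < 0$ as $t \to 0$ (rewrite the factor as $\frac{t^{w} + y e^{-y_j}}{t^{w} - e^{-y_j}}$ to see this cleanly); the roles of positive and negative weights swap as $t \to \infty$. Hence the normal product collapses to $(-y)^{d(+,F)}$ (resp.\ $(-y)^{d(-,F)}$) and the remaining $F$-integrand is exactly the Hirzebruch integrand computing $\chi_y(F)$. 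Since $\chi_y^{S^1}(M)(t)$ is a character of $S^1$, i.e.\ a Laurent polynomial in $t$ admitting finite limits at both $0$ and $\infty$, it must be constant, equal to $\chi_y(M)$; in particular both limits yield the two equalities claimed in the statement.

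The main obstacle I expect is the technical setup of equivariant index theory for almost complex (not necessarily integrable) manifolds: since the $\bar\partial_p$ are only defined at the symbol level in the purely almost complex case, one must interpret the equivariant index via equivariant $K$-theory or through a stably almost complex/unitary cobordism framework. Hattori--Taniguchi handle this via unitary cobordism invariance, while Kosniowski--Yahia work directly with rigidity on unitary manifolds; either framework legitimizes the formal localization manipulation above.
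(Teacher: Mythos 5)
This theorem is not proved in the paper at all: it is quoted from Hattori--Taniguchi and Kosniowski--Yahia, so there is no internal argument to compare against. Your proposal is the standard rigidity proof of the Kosniowski formula and is essentially correct: localize the equivariant $\chi_y$-genus to the fixed components, observe that each normal factor $\frac{1+y\,t^{-w}e^{-y_j}}{1-t^{-w}e^{-y_j}}$ tends to $-y$ or $1$ according to the sign of $w$ as $t\to 0$ (and with the roles swapped as $t\to\infty$), and conclude from the fact that a Laurent polynomial with finite limits at both $0$ and $\infty$ is constant. This is the Atiyah--Hirzebruch/Lusztig-style argument, whereas the cited sources reach the same statement through unitary cobordism (Hattori--Taniguchi) and the holomorphic Lefschetz formula (Kosniowski); all three routes are legitimate, and yours has the advantage of making the two equalities in the statement appear symmetrically as the two limits of a single rigid character. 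Two small remarks: the normal weights $w_{F,j}$ are automatically nonzero precisely because $F$ is a component of the fixed point set, so no genericity choice is actually needed for a circle action; and you are right that the only technical point requiring care in the non-integrable case is defining the equivariant index of the symbol-level operators $\bar\partial_p$, which is handled by the equivariant index theorem for elliptic symbols (or the spin$^c$ Dirac operator twisted by $\Lambda^p T^*M^{1,0}$), so the argument goes through for almost complex $M$ as claimed.
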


A family of multi-fans describing a 4-dimensional almost complex torus manifold encodes the Todd genus of the manifold.

\begin{pro} \label{tp63}
Let $M$ be a 4-dimensional almost complex torus manifold. Let $\Delta$ be a family of multi-fans $V_1$, $\cdots$, $V_m$ describing $M$. Then the Todd genus of $M$ is
$\mathrm{Todd}(M)=\sum_{j=1}^m T(V_j)$ (see Definition \ref{td11}).
\end{pro}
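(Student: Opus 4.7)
The plan is to apply the Kosniowski formula (Theorem~\ref{tt62}) at $y=0$ to a generic circle subgroup of $T^2$, which converts the Todd genus into a fixed-point count, and then to match this count with $\sum_j T(V_j)$ by interpreting it as crossings of a hyperplane and exploiting admissibility of each $V_j$.

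First I would choose a primitive $\xi\in\mathbb{Z}^2$ satisfying $\langle v_{j,i},\xi\rangle\neq 0$ for every $1\leq j\leq m$ and every $1\leq i\leq k_j$, and let $S^1_\xi\subset T^2$ be the corresponding circle subgroup. Since $\Delta$ describes $M$, every $T^2$-fixed point is isolated and the weights at $p_{j,i}$ are $\{v_{j,i},-v_{j,i-1}\}$; by the choice of $\xi$ no $S^1_\xi$-weight vanishes, so $M^{S^1_\xi}=M^{T^2}$ and the $S^1_\xi$-weights at $p_{j,i}$ are $\langle v_{j,i},\xi\rangle$ and $-\langle v_{j,i-1},\xi\rangle$. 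Using $\mathrm{Todd}(M)=\chi_0(M)$, specializing Theorem~\ref{tt62} at $y=0$ (together with $\chi_0(\mathrm{pt})=1$) gives
$$\mathrm{Todd}(M)=\#\{p\in M^{T^2}:\textrm{every }S^1_\xi\textrm{-weight at }p\textrm{ is positive}\}.$$
At $p_{j,i}$ this positivity reads $\langle v_{j,i-1},\xi\rangle<0<\langle v_{j,i},\xi\rangle$, so the contribution from $A_j$ is the number of cyclic indices $i$ for which the consecutive pair $(v_{j,i-1},v_{j,i})$ traverses the line $\ell=\{\langle\cdot,\xi\rangle=0\}$ from the open half-plane $H_-=\{\langle\cdot,\xi\rangle<0\}$ into $H_+=\{\langle\cdot,\xi\rangle>0\}$.

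The remaining step, which I expect to be the main obstacle, is to identify this cyclic crossing count with the winding number $T(V_j)$ from Definition~\ref{td11}. The key observation is that admissibility of $V_j$ forces each consecutive pair $v_{j,i-1},v_{j,i}$ to be an oriented $\mathbb{Z}^2$-basis, so $\det(v_{j,i-1},v_{j,i})=\pm 1$ and the signed angular step from $v_{j,i-1}$ to $v_{j,i}$ lies strictly in $(0,\pi)$ in the counterclockwise case (and strictly in $(-\pi,0)$ in the clockwise case). Consequently the shortest arc from $v_{j,i-1}$ to $v_{j,i}$ on $S^1$ crosses $\ell$ at most once, and does so precisely when the endpoints lie in opposite half-planes. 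The entire cyclic trajectory makes $T(V_j)$ full revolutions and therefore crosses $\ell$ exactly $2T(V_j)$ times, of which exactly half, namely $T(V_j)$, are $H_-\to H_+$ crossings. Summing the resulting equalities over $j$ yields $\mathrm{Todd}(M)=\sum_{j=1}^m T(V_j)$, as claimed.
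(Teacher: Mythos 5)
Your proposal is correct and follows essentially the same route as the paper: choose a generic circle subgroup $S^1_\xi$ with the same fixed point set, apply the Kosniowski formula at $y=0$ to identify $\mathrm{Todd}(M)$ with the number of fixed points having no negative $S^1_\xi$-weights, and recognize these as the $H_-\to H_+$ crossings of the sequence $v_{j,i}$, which occur exactly $T(V_j)$ times per multi-fan. Your final crossing-count argument is in fact slightly more detailed than the paper's, which asserts the identification with the winding number without spelling out the "each step crosses $\ell$ at most once" justification.
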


\begin{proof}
Let $\xi \in \mathfrak{t} \cap \mathbb{Z}^2$ be an integral element of the Lie algebra $\mathfrak{t}$ of $T^2$ such that the action of the subcircle $S^1$ generated by $\xi$ on $M$ has the same fixed point set as the $T^2$-action on $M$. Such an element $\xi$ exists because there is a finite number of orbit types.

Since $\Delta$ describes $M$, the $T^2$-weights at a fixed point $p_{j,i}$ are $v_{j,i}$ and $-v_{j,i-1}$, for $1 \leq i \leq k_j$ and $1 \leq j \leq m$ (Definition \ref{td13}). Therefore, the $S^1$-weights at $p_{j,i}$ are $v_{j,i}(\xi)$ and $-v_{j,i-1}(\xi)$. In particular, $v_{j,i}(\xi) \neq 0$ for all $i$ and $j$.

Let $\ell$ be a line in $\mathbb{R}^2$ passing through the origin and perpendicular to $\xi$. Let $R_{+}=\{y \in \mathbb{R}^2 \, | \, y(\xi) > 0\}$ and $R_{-}=\{y \in \mathbb{R}^2 \, | \, y(\xi) < 0\}$ be disjoint half-planes in $\mathbb{R}$ separated by $\ell$. Since $v_{j,i}(\xi) \neq 0$ for all $i$ and $j$, every $T^2$-weight $w_{pi}$ at a fixed point $p \in M^{T^2}$ belongs to one of them.

By Theorem \ref{tt62}, the Todd genus of $M$ is equal to the number of fixed points that have no negative $S^1$-weights. On the other hand, a fixed point $p_{j,i}$ has no negative $S^1$-weights if and only if $v_{j,i}(\xi)>0$ and $-v_{j,i-1}(\xi)>0$ if and only if $v_{j,i-1}$ is in $R_-$ and $v_{j,i}$ is in $R_+$. Therefore, each time successive vectors $v_{j,i-1}$ and $v_{j,i}$ pass from $R_-$ to $R_+$, the corresponding fixed point $p_i$ contributes $1$ to the Todd genus of $M$. This implies that the Todd genus of $M$ is precisely the sum $\sum_{j=1}^m T(V_j)$ of numbers $T(V_j)$ of revolutions vectors $v_{j,1}$, $\cdots$, $v_{j,k_j}$ in each multi-fan $V_j$ make around the origin in $\mathbb{R}^2$. \end{proof}

The Todd genus of a 4-dimensional complex torus manifold is 1.

\begin{lem} \label{todd}
Let $M$ be a 4-dimensional complex torus manifold. Then the Todd genus of $M$ is 1.
\end{lem}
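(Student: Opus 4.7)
The plan is to invoke Theorem~\ref{t28} of Carrell--Howard--Kosniowski to reduce the problem to a computation on $\mathbb{CP}^2$ and the Hirzebruch surfaces, where the answer is immediate.

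First I would verify that the hypotheses of Theorem~\ref{t28} apply to a generic subcircle of $T^2$. Because $M$ is a $4$-dimensional (almost) complex torus manifold, the weights at any fixed point of the $T^2$-action form a basis of $\mathbb{Z}^2$, so no nontrivial subgroup fixes a neighborhood of that point; hence $M^{T^2}$ is discrete and, by the hypothesis of the definition of a torus manifold, non-empty. As in the proof of Proposition~\ref{tp63}, I would choose an integral element $\xi \in \mathfrak{t} \cap \mathbb{Z}^2$ not annihilating any $T^2$-weight at any fixed point; the induced circle $S^1 \subset T^2$ then satisfies $M^{S^1} = M^{T^2}$, so the $S^1$-action is holomorphic and has a non-empty discrete fixed point set. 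Theorem~\ref{t28} now applies: $M$ is obtained from either $\mathbb{CP}^2$ or some Hirzebruch surface $\Sigma_n$ by a finite sequence of blow ups at fixed points.

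Next I would appeal to two standard facts about the arithmetic genus $\chi(\mathcal{O})$, which for a compact complex manifold equals the Todd genus. First, $\chi(\mathcal{O}_{\mathbb{CP}^2}) = 1$ by direct computation, and $\chi(\mathcal{O}_{\Sigma_n}) = 1$ because $\Sigma_n$ is a holomorphic $\mathbb{CP}^1$-bundle over $\mathbb{CP}^1$, so the Leray spectral sequence gives $\chi(\mathcal{O}_{\Sigma_n}) = \chi(\mathcal{O}_{\mathbb{CP}^1})^2 = 1$. Second, a blow up of a smooth point on a complex surface preserves $\chi(\mathcal{O})$, since the exceptional divisor $E \cong \mathbb{CP}^1$ contributes trivially. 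Combining these with the conclusion of the previous paragraph gives $\mathrm{Todd}(M) = 1$.

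The only real issue is a bookkeeping one, namely verifying that a generic subcircle inherits the discrete, non-empty fixed point set needed to invoke Theorem~\ref{t28}; this is straightforward from the basis-of-$\mathbb{Z}^2$ condition on the weights. Everything else is either a direct quotation of a stated theorem or a classical fact about arithmetic genus, so no serious obstruction arises.
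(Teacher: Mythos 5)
Your proof is correct and follows essentially the same route as the paper: pass to a generic subcircle $\xi$ with $M^{S^1}=M^{T^2}$ as in Proposition~\ref{tp63}, invoke Theorem~\ref{t28} to reduce to $\mathbb{CP}^2$ and the Hirzebruch surfaces, and use that blow up at a point preserves the Todd genus. The paper simply asserts the last two facts, whereas you justify them via the arithmetic genus $\chi(\mathcal{O})$; this is a fine, standard filling-in of detail, not a different argument.
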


\begin{proof}
Let $S^1$ be a subcircle of $\mathbb{T}^2$ generated by $\xi$, where $\xi$ is as in the proof of Proposition \ref{tp63}. Applying Theorem \ref{t28} to the $S^1$-action on $M$, it follows that $M$ is obtained from $\mathbb{CP}^2$ or a Hirzebruch surface by $S^1$-equivariant blow ups at fixed points. The Todd genera of $\mathbb{CP}^2$ and all Hirzebruch surfaces are all 1. Blowing up of a fixed point does not change the Todd genus.
\end{proof}

Therefore, any 4-dimensional complex torus manifold is described by a fan.

\begin{lem} \label{connected}
Let $M$ be a 4-dimensional complex torus manifold. Then there exists a fan describing $M$. Consequently, an admissible graph describing $M$ is connected.
\end{lem}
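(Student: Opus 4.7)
The plan is to combine Proposition \ref{tp24}, Proposition \ref{tp63}, and Lemma \ref{todd} to force the family of admissible multi-fans describing $M$ to consist of a single fan, and then to pull this back to the graph side via Lemma \ref{tl21}.

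First, by Proposition \ref{tp24}, there exists a family $\Delta$ of admissible multi-fans $V_1, V_2, \ldots, V_m$ describing $M$. By Proposition \ref{tp63}, the Todd genus satisfies
\[
\mathrm{Todd}(M) = \sum_{j=1}^m T(V_j),
\]
while by Lemma \ref{todd}, $\mathrm{Todd}(M) = 1$. The key observation is that each winding number $T(V_j)$ is a positive integer: since the sequence $v_{j,1}, v_{j,2}, \ldots, v_{j,k_j}, v_{j,1}$ forms a closed loop around the origin, it makes at least one revolution, so $T(V_j) \geq 1$. The equation $\sum_j T(V_j) = 1$ with $T(V_j) \geq 1$ then forces $m = 1$ and $T(V_1) = 1$. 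By Definition \ref{td11}, $V_1$ is a fan, proving the first assertion.

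For the second assertion, let $\Gamma$ be an admissible graph describing $M$. By Lemma \ref{tl21}, $\Gamma$ corresponds naturally to a family $\Delta'$ of admissible multi-fans, one multi-fan per connected component of $\Gamma$; moreover, this $\Delta'$ describes $M$ (as in the proof of Proposition \ref{tp24}). Applying the argument above to $\Delta'$, we conclude that $\Delta'$ contains exactly one multi-fan, hence $\Gamma$ has exactly one connected component, i.e., $\Gamma$ is connected.

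The only step that requires a moment's thought is the claim $T(V_j) \geq 1$; this is immediate from the definition of winding number together with the fact that a multi-fan's vectors are ordered cyclically (counterclockwise or clockwise) so the sequence closes up after at least one full revolution. Everything else is bookkeeping across the stated results.
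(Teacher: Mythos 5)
Your proof is correct and follows essentially the same route as the paper: Todd genus $=1$ by Lemma \ref{todd}, combined with Proposition \ref{tp63} and the fact that each winding number is at least $1$, forces the family to be a single fan, and Lemma \ref{tl21} transfers this to connectedness of the graph. You merely make explicit the step $T(V_j)\geq 1$ that the paper leaves implicit.
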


\begin{proof}
By Lemma \ref{todd}, the Todd genus of $M$ is 1. By Theorem \ref{tt19}, there exists a finite family $\Delta$ of admissible multi-fans $V_1,\cdots,V_m$ describing $M$. By Proposition \ref{tp63}, $\Delta$ consists of a single fan. By Lemma \ref{tl21}, the fan $\Delta$ gives rise to a connected admissible graph describing $M$.
\end{proof}

A 4-dimensional complex torus manifold described by a minimal fan is $\mathbb{CP}^1 \times \mathbb{CP}^1$.

\begin{pro} \label{tp61}
Let $a \in \{-1,1\}$. Suppose that a minimal fan 
\begin{center}
$\{(1,0), (0,a), (-1,0), (0,-a)\}$ 
\end{center}
describes a 4-dimensional complex torus manifold $M$. Then $M$ is biholomorphic to $\mathbb{CP}^1 \times \mathbb{CP}^1$.
\end{pro}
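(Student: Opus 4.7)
The plan is to identify $M$ via the classification of compact complex surfaces with a holomorphic circle action and then eliminate the non-trivial Hirzebruch surfaces by matching self-intersection data.

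First, I would read off the self-intersection numbers of the four equivariant spheres directly from the fan. The admissibility relation $v_{i+1} = -a_i v_i - v_{i-1}$ applied to $v_1 = (1,0),\, v_2 = (0,a),\, v_3 = (-1,0),\, v_4 = (0,-a)$ forces $a_i = 0$ for every $i$, so by Lemma \ref{tl23} each of the four $v_i$-spheres $(p_i, p_{i+1})$ in $M$ has self-intersection zero. I would also record that any $T^2$-invariant holomorphic $\mathbb{CP}^1 \subset M$ carrying a non-trivial $T^2$-action is one of these four spheres: such a curve is a $w$-sphere for some weight $w$, and the only weights at a fixed point $p_i$ are $\{v_i, -v_{i-1}\}$, so $w$ must come from the fan.

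Next, I would apply Theorem \ref{t28} to a generic subcircle $S^1 \subset T^2$ chosen so that $M^{S^1} = M^{T^2}$ (this is possible because only finitely many orbit types occur). The theorem realizes $M$ as $\mathbb{CP}^2$ or some Hirzebruch surface $\Sigma_n$ with a finite sequence of $k \geq 0$ equivariant blow-ups at fixed points. Each such blow-up increases the fixed-point count by one, so counting fixed points gives either $k = 0$ and $M \cong \Sigma_n$, or $k = 1$ starting from $\mathbb{CP}^2$; since $\mathbb{CP}^2$ blown up at a point is biholomorphic to $\Sigma_1$, in either case $M$ is biholomorphic to some Hirzebruch surface $\Sigma_n$ with $n \geq 0$.

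Finally, I would exclude $n \geq 1$. For $n \geq 1$, the Hirzebruch surface $\Sigma_n$ contains a unique irreducible holomorphic curve $C$ with $[C]^2 = -n$, namely the section at infinity. Uniqueness forces $\mathrm{Aut}(\Sigma_n)$, and thus the $T^2$-action, to preserve $C$ setwise. Since $M^{T^2}$ is discrete, the induced $T^2$-action on $C \cong \mathbb{CP}^1$ is non-trivial, factors through a one-dimensional quotient, and therefore has exactly two fixed points, exhibiting $C$ as a $w$-sphere. By the first step $C$ must then coincide with one of the four equivariant zero-self-intersection spheres, contradicting $[C]^2 = -n \leq -1$. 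Hence $n = 0$ and $M$ is biholomorphic to $\Sigma_0 = \mathbb{CP}^1 \times \mathbb{CP}^1$. The delicate point is this third step: one must carefully justify that every automorphism of $\Sigma_n$ fixes the $-n$-curve setwise (from uniqueness among irreducible curves of self-intersection $-n$) and that the induced $T^2$-action on this curve cannot be trivial, so that $C$ is genuinely an isotropy sphere and can be matched against the four spheres supplied by the fan.
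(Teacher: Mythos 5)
Your argument is correct, but the final step is genuinely different from the paper's. Both proofs begin the same way: pass to a generic subcircle $S^1\subset T^2$ with $M^{S^1}=M^{T^2}$ and invoke the Carrell--Howard--Kosniowski classification (Theorem \ref{t28}) to conclude that $M$ is a Hirzebruch surface $\Sigma_n$ (the paper cites Theorem C of \cite{CHK} directly rather than counting fixed points, but the effect is the same). To rule out $n\geq 1$, the paper then quotes the explicit list from \cite[p.~77--78]{CHK} of the $S^1$-weights at the four fixed points of a holomorphic circle action on $\Sigma_m$, namely $\{c,d\},\{c,-d\},\{-c,d-mc\},\{-c,mc-d\}$, and matches it against the weights $\{b_1,b_2\},\{-b_1,b_2\},\{-b_1,-b_2\},\{b_1,-b_2\}$ computed from the fan to force $m=0$; this is a purely computational matching that buys brevity at the cost of relying on a further explicit result of \cite{CHK}. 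You instead argue geometrically: the fan forces $a_i=0$ in the admissibility relations, so by Lemma \ref{tl23} every isotropy sphere has self-intersection $0$, while for $n\geq 1$ the unique irreducible $(-n)$-curve $C\subset\Sigma_n$ is preserved by $\mathrm{Aut}(\Sigma_n)$ and hence by $T^2$, giving a contradiction. The one point you rightly flag as delicate can be closed as follows: since $M^{T^2}$ is discrete, the image of $T^2$ in $\mathrm{Aut}(C)\cong PGL(2,\mathbb{C})$ is a nontrivial compact torus, hence a circle, so $C$ contains exactly two fixed points $p,q$; at $p$ the invariant line $T_pC$ is a weight line for some weight $w$, the image of $\ker w$ in $\mathrm{Aut}(C)$ is a compact group fixing $p$ and acting trivially on $T_pC$, hence trivial, so $C\subseteq M^{\ker w}$ and $C$ is the component of $M^{\ker w}$ through $p$, i.e.\ one of the four $v_i$-spheres of the fan, whose self-intersection is $0$ by Lemma \ref{tl23}. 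With that lemma supplied, your proof is complete and arguably more self-contained in its endgame, whereas the paper's is shorter but leans harder on \cite{CHK}.
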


\begin{proof}
Let $\Delta$ be the minimal fan $\{v_1=(1,0), v_2=(0,a), v_3=(-1,0), v_4=(0,-a)\}$. Suppose that $a=1$; the case $a=-1$ will be analogous. For $1 \leq i \leq 4$, let $p_i$ denote the fixed point that has weights $\{-v_{i-1},v_i\}$; here $v_0=v_4$.

Because there is a finite number of orbit types, there exists an integral element $\xi=(b_1,b_2)$ of the lie algebra $\mathfrak{t}$ of $T^2$, such that the action of the subcircle $S^1$ generated by $\xi$ on $M$ has the same fixed point set as the $T^2$-action on $M$. Then the $S^1$-weights at $p_i$ are $\{\langle -v_{i-1},\xi \rangle, \langle v_i,\xi \rangle\}$. That is, the $S^1$-weights at $p_1,p_2,p_3,p_4$ are
\begin{center}
$\{b_2,b_1\}, \{-b_1,b_2\}, \{-b_2,-b_1\}, \{b_1,-b_2\}$,
\end{center}
respectively. In particular, $b_1,b_2 \neq 0$.

By \cite[Theorem C]{CHK}, $M$ is one of the Hirzebruch surfaces $\Sigma_n$. Moreover, \cite[p. 77-78]{CHK} shows that the weights at the fixed points of an $S^1$-action on the Hirzebruch surface $\Sigma_m$ with a discrete fixed point set are of the form
\begin{center}
$\{c,d\}, \{c,-d\}, \{-c,d-mc\}, \{-c,mc-d\}$
\end{center}
for some non-zero integers $c$ and $d$ with $mc \neq d$. This implies that $n=0$, that is, $M$ is $\Sigma_0=\mathbb{CP}^1 \times \mathbb{CP}^1$. \end{proof}

Moreover, we can choose the biholomorphism in Proposition \ref{tp61} to be equivariant.

\begin{pro}
Let $a \in \{-1,1\}$. Suppose that a minimal fan 
\begin{center}
$\{(1,0), (0,a), (-1,0), (0,-a)\}$ 
\end{center}
describes a 4-dimensional complex torus manifold $M$. Then $M$ is equivariantly biholomorphic to $\mathbb{CP}^1 \times \mathbb{CP}^1$ with a $T^2$-action
\begin{center}
$(t_1,t_2) \cdot ([z_0:z_1],[y_0:y_1])=([z_0: t_1 z_1],[y_0:t_2 y_1])$
\end{center}
for all $(t_1,t_2) \in T^2 \subset \mathbb{C}^2$ and $([z_0:z_1].[y_0:y_1]) \in \mathbb{CP}^1 \times \mathbb{CP}^1$.
\end{pro}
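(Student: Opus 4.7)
The plan is to bootstrap from the non-equivariant biholomorphism supplied by Proposition \ref{tp61}. Take a biholomorphism $\Phi: M \to \mathbb{CP}^1 \times \mathbb{CP}^1$ and transport the $T^2$-action on $M$ through $\Phi$ to obtain a holomorphic $T^2$-action $\rho$ on $\mathbb{CP}^1 \times \mathbb{CP}^1$; by construction $\Phi$ is equivariant with respect to $\rho$ on the target, so it suffices to exhibit an automorphism of $\mathbb{CP}^1 \times \mathbb{CP}^1$ that conjugates $\rho$ to the standard action in the statement.

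Since $T^2$ is connected, $\rho$ factors through the identity component $\mathrm{Aut}_0(\mathbb{CP}^1 \times \mathbb{CP}^1) = PGL(2,\mathbb{C}) \times PGL(2,\mathbb{C})$. Any compact connected subgroup of this product is conjugate into the standard maximal compact torus $S^1 \times S^1$ acting by $(s_1, s_2) \cdot ([z_0{:}z_1], [y_0{:}y_1]) = ([z_0{:}s_1 z_1], [y_0{:}s_2 y_1])$. After post-composing $\Phi$ with the conjugating element of $PGL(2,\mathbb{C})^2$, the action $\rho$ takes the form
\[
(t_1, t_2) \cdot ([z_0{:}z_1], [y_0{:}y_1]) = ([z_0{:}t_1^{\alpha} t_2^{\beta} z_1], [y_0{:}t_1^{\gamma} t_2^{\delta} y_1])
\]
for integers $\alpha, \beta, \gamma, \delta$ with $\alpha\delta - \beta\gamma = \pm 1$ (by effectiveness), and its weights at $([1{:}0],[1{:}0])$ are the two vectors $(\alpha, \beta)$ and $(\gamma, \delta)$.

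To finish I would pin down the matrix $\left(\begin{smallmatrix}\alpha & \beta \\ \gamma & \delta\end{smallmatrix}\right)$ using the fan. Equivariance identifies these weights with those at the fixed point of $M$ mapping to $([1{:}0],[1{:}0])$, and the fan hypothesis forces them to lie in $\{\pm(1,0), \pm(0,1)\}$. I would then compose with the three outer automorphisms of $\mathbb{CP}^1 \times \mathbb{CP}^1$ normalizing $S^1 \times S^1$ --- the swap of factors, $[z_0{:}z_1] \leftrightarrow [z_1{:}z_0]$, and $[y_0{:}y_1] \leftrightarrow [y_1{:}y_0]$ --- which act on $(\alpha, \beta, \gamma, \delta)$ by transposing the two rows and by negating the first or second row respectively. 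A brief case check shows that in both cases $a = \pm 1$ of the fan these moves reduce $(\alpha, \beta, \gamma, \delta)$ to $(1, 0, 0, 1)$, yielding the desired equivariant biholomorphism.

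The main obstacle I anticipate is purely discrete: once the $PGL(2,\mathbb{C})^2$-conjugation removes the continuous freedom, the remaining $\mathrm{GL}(2,\mathbb{Z})$-worth of ambiguity in $\rho$ must be eliminated using only the finite group of outer automorphisms of the pair $(\mathbb{CP}^1 \times \mathbb{CP}^1, S^1 \times S^1)$, since we are not free to reparametrize the acting $T^2$ itself. Verifying that the weight constraints from the fan are tight enough to make this always possible is the one substantive check.
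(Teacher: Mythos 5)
Your proposal is correct, but it takes a genuinely different route from the paper. The paper also starts from Proposition \ref{tp61}, but then argues locally: it matches the weights at each of the four fixed points of $M$ with those at the four fixed points of the standard action, invokes local linearization to get equivariant biholomorphisms of invariant neighborhoods $U_i \to U_i'$, and concludes from the fact that the $U_i'$ cover $\mathbb{CP}^1 \times \mathbb{CP}^1$; the gluing of these local maps into a single global equivariant biholomorphism is left implicit. Your argument is global from the start: you transport the action through the biholomorphism, use that $\mathrm{Aut}_0(\mathbb{CP}^1\times\mathbb{CP}^1)=PGL(2,\mathbb{C})^2$ and that the (necessarily abelian) compact connected image of $T^2$ is conjugate into the standard $S^1\times S^1$, and then eliminate the residual $\mathrm{GL}(2,\mathbb{Z})$ ambiguity by the weight computation at one fixed point together with the order-eight group generated by the factor swap and the two coordinate flips, which indeed acts simply transitively on the eight signed permutation matrices arising from the minimal fan. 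This buys you a proof with no patching step, at the cost of importing standard Lie-theoretic facts (Cartan's theorem on maximal compact subgroups and conjugacy of maximal tori) that the paper's elementary local argument does not need. Two small points to tighten: the conjugacy claim should be stated for compact connected \emph{abelian} subgroups (a copy of $PU(2)$ is compact and connected but not conjugate into a torus), which suffices since the image of $T^2$ is a torus; and the determinant condition $\alpha\delta-\beta\gamma=\pm 1$ can be read off directly from the fan condition that the two weights at a fixed point form a basis of $\mathbb{Z}^2$, independently of the effectiveness argument.
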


\begin{proof}
By Proposition \ref{tp61}, $M$ is biholomorphic to $\mathbb{CP}^1 \times \mathbb{CP}^1$. Suppose that $a=1$; the case where $a=-1$ is analogous. The $T^2$-action on $M$ has 4 fixed points $p_1$, $p_2$, $p_3$, $p_4$ with weights $\{(1,0),(0,1)\}$, $\{(-1,0),(0,1)\}$, $\{(0,-1),(1,0)\}$, $\{(-1,0),(0,-1)\}$, respectively.

The weights at $p_1$ in $M$ agree with those at $([1:0],[1:0])$ in $\mathbb{CP}^1 \times \mathbb{CP}^1$ with the above action. Therefore, there is an equivariant biholomorphism between two neighborhoods $U_1$ of $p_1$ and $U_1'$ of $q_1=([1:0],[1:0])$. Similarly, there are equivariant biholomorphism between two neighborhoods $U_2$ of $p_2$ and $U_2'$ of $q_2=([0:1],[1:0])$, one between two neighborhoods $U_3$ of $p_3$ and $U_3'$ of $q_3=([1:0],[0:1])$, and one between two neighborhoods $U_4$ of $p_4$ and $U_4'$ of $q_4=([0:1],[0:1])$. Since $U_i'$ cover $\mathbb{CP}^1 \times \mathbb{CP}^1$, $U_i$ cover $M$ and the claim follows.
\end{proof}

With the above, we prove Theorem \ref{tt115}, which states that we can equivariantly blow up and down any 4-dimensional complex torus manifold to $\mathbb{CP}^1 \times \mathbb{CP}^1$ with a standard $T^2$-action with unit weights.

\begin{proof}[\textbf{Proof of Theorem \ref{tt115}}]
By Theorem \ref{tt114}, we can equivariantly blow up and down $M$ to another 4-dimensional complex torus manifold $M'$ described by a family $\Delta'$ of minimal multi-fans. By Lemma \ref{todd}, the Todd genus of $M'$ is 1. By Proposition \ref{tp63}, $\Delta'$ consists of a single minimal fan. This fan $\Delta'$ coincides with the minimal fan $\{(1,0), (0,a), (-1,0), (0,-a)\}$ for some $a \in \{-1,1\}$, up to shifting of indices of vectors in $\Delta'$. Therefore, by Proposition \ref{tp61}, $M'$ is equivariantly biholomorphic to $\mathbb{CP}^1 \times \mathbb{CP}^1$ with $T^2$-action
\begin{center}
$(t_1,t_2) \cdot ([z_0:z_1],[y_0:y_1])=([z_0: t_1 z_1],[y_0:t_2 y_1])$.
\end{center}
\end{proof}

Next, we prove Corollary \ref{tc117}, which establishes a correspondence between 4-dimensional complex torus manifolds and admissible fans.

\begin{proof}[\textbf{Proof of Corollary \ref{tc117}}] 
Let $M_1$ and $M_2$ be 4-dimensional complex torus manifolds. Suppose that $M_1$ and $M_2$ are equivariantly biholomorphic; let $f:M_1 \to M_2$ be an equivariant biholomorphism. By Lemma \ref{connected}, there exists a fan $V$ describing $M_1$. Since $f$ is an equivariant biholomorphism, $f$ takes fixed points of $M_1$ to fixed points of $M_2$, and takes $w$-spheres (Definition \ref{tsphere}) of $M_1$ to $w$-spheres of $M_2$; thus, $V$ also describes $M_2$.

Let $V$ be an admissible fan. Suppose that $V$ describes two 4-dimensional complex torus manifolds $M_1$ and $M_2$. This means that (neighborhoods of) two chains of $w$-spheres in $M_1$ and $M_2$ are equivariantly biholomorphic. Blow up and blow down are local operations; by Theorem \ref{tt115}, we can use the same blow up and blow down to each $M_i$ to $\mathbb{CP}^1 \times \mathbb{CP}^1$ with $T^2$-action
\begin{center}
$(t_1,t_2) \cdot ([z_0:z_1],[y_0:y_1])=([z_0: t_1 z_1],[y_0:t_2 y_1])$.
\end{center}
This implies that $M_1$ and $M_2$ are equivariantly biholomorphic.
\end{proof}

\begin{figure}
\centering
\begin{subfigure}[b][4.6cm][s]{.4\textwidth} 
\centering
\begin{tikzpicture}
\draw[->] (-2, 0) -- (2, 0) node[right] {$x$};
\draw[->] (0, -1.5) -- (0, 2) node[above] {$y$};
\draw[line width=0.5mm][->] (0, 0) -- (1, 0) node[below] {$v_1$};
\draw[line width=0.5mm][->] (0, 0) -- (-1, 1) node[above] {$v_2$};
\draw[line width=0.5mm][->] (0, 0) -- (0, -1) node[left] {$v_3$};
\end{tikzpicture} \qquad
\caption{Fan describing $\mathbb{CP}^2$ in Example \ref{te1}}\label{tfig10a}
\end{subfigure}
\begin{subfigure}[b][4.6cm][s]{.4\textwidth} 
\centering
\vfill
\begin{tikzpicture}[state/.style ={circle, draw}]
\node[state] (a) {$p_1$};
\node[state] (b) [above right=of a] {$p_2$};
\node[state] (c) [above left=of b] {$p_3$};
\path (a) [->] edge node [right] {$(1,0)$} (b);
\path (b) [->] edge node [right] {$(-1,1)$} (c);
\path (c) [->] edge node [left] {$(0,-1)$} (a);
\end{tikzpicture}
\caption{Graph describing $\mathbb{CP}^2$ in Example \ref{te1}}\label{tfig10b}
\end{subfigure}\qquad 
\caption{Fan and graph describing $\mathbb{CP}^2$ in Example \ref{te1}}\label{tfig10}
\end{figure}
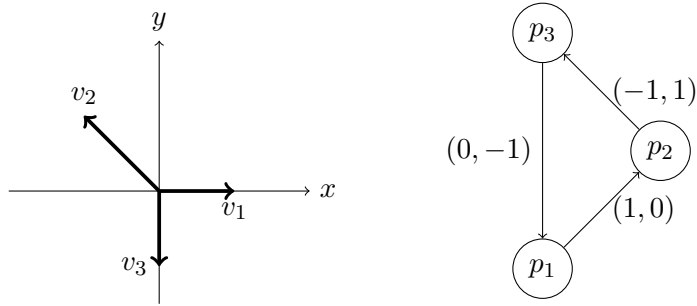

For $i=1,2$, let $M_i$ be $\mathbb{CP}^2$ or one of the Hirzebruch surfaces. While Corollary \ref{tc118} already says that any two 4-dimensional complex torus manifolds are obtained from each other by equivariant blow up and down, we shall construct explicit blow up and down from $M_1$ to $M_2$ with particular actions. We consider the following particular actions for $\mathbb{CP}^2$ and the Hirzebruch surfaces.

\begin{exa} \label{te1}
In Example \ref{te3} of the action on $\mathbb{CP}^2$, we take $v_1=(1,0)$ and $v_2=(-1,1)$. That is, let $T^2$ act on $\mathbb{CP}^2$ by
\begin{center}
$(g_1,g_2) \cdot [z_0:z_1:z_2]=[z_0:g_1 z_1: g_2z_2]$.
\end{center}
As shown in Example \ref{te3}, the 3 fixed points $[1:0:0]$, $[0:1:0]$, and $[0:0:1]$ have weights $\{(1,0), (0,1)\}$, $\{(-1,0),(-1,1)\}$, and $\{(1,-1), (0,-1)\}$, respectively, and 
the fan $V$ (Figure \ref{tfig10a}) describes $\mathbb{CP}^2$ with this action. Figure \ref{tfig10a} is the graph describing this action on $\mathbb{CP}^2$.
\end{exa}

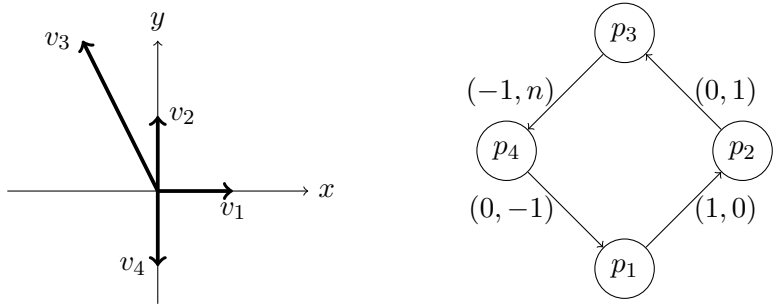
\begin{figure}
\centering
\begin{subfigure}[b][4.6cm][s]{.4\textwidth}
\centering
\begin{tikzpicture}
\draw[->] (-2, 0) -- (2, 0) node[right] {$x$};
\draw[->] (0, -1.5) -- (0, 2) node[above] {$y$};
\draw[line width=0.5mm][->] (0, 0) -- (1, 0) node[below] {$v_1$};
\draw[line width=0.5mm][->] (0, 0) -- (0, 1) node[right] {$v_2$};
\draw[line width=0.5mm][->] (0, 0) -- (-1, 2) node[left] {$v_3$};
\draw[line width=0.5mm][->] (0, 0) -- (0, -1) node[left] {$v_4$};
\end{tikzpicture}
\caption{Fan describing $\Sigma_n$ in Example \ref{te2}} \label{tfig11a}
\end{subfigure}\qquad 
\begin{subfigure}[b][4.6cm][s]{.4\textwidth}
\centering
\begin{tikzpicture}[state/.style ={circle, draw}]
\node[state] (A) {$p_1$};
\node[state] (B) [above left=of A] {$p_4$};
\node[state] (C) [above right=of A] {$p_2$};
\node[state] (D) [above right=of B] {$p_3$};
\path (B) [->] edge node[left] {$(0,-1)$} (A);
\path (A) [->] edge node [right] {$(1,0)$} (C);
\path (D) [->] edge node [left] {$(-1,n)$} (B);
\path (C) [->] edge node [right] {$(0,1)$} (D);
\end{tikzpicture}
\caption{Graph describing $\Sigma_n$ in Example \ref{te2}}\label{tfig11b}
\end{subfigure}
\caption{Fan and graph describing Hirzebruch surface $\Sigma_n$ in Example \ref{te2}}\label{tfig11}
\end{figure}

\begin{exa} \label{te2}
In Example \ref{te4}, we take $v_1=(1,0)$ and $v_2=(0,1)$. That is, let $T^2$ act on the Hirzebruch surface $\Sigma_n$ by
\begin{center}
$(g_1,g_2) \cdot ([z_0:z_1:z_2],[y_1:y_2]) = ([g_1 z_0:z_1:g_2^n z_2],[y_1:g_2 y_2])$.
\end{center}
Figure \ref{tfig11b} is the graph describing this action on $\Sigma_n$, and hence Figure \ref{tfig11a} is the fan describing this action.
\end{exa}

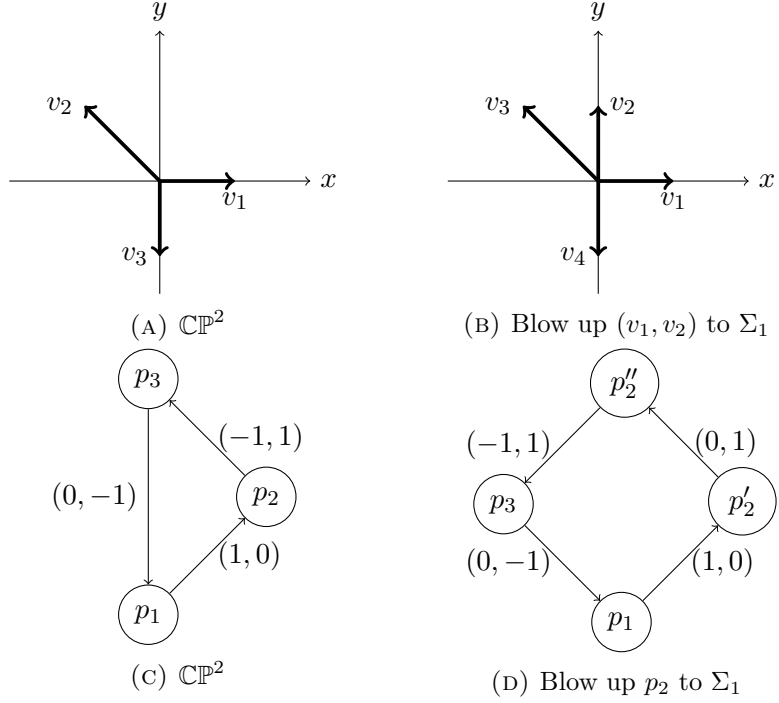
\begin{figure}
\centering
\begin{subfigure}[b][4.7cm][s]{.4\textwidth}
\centering
\begin{tikzpicture}
\draw[->] (-2, 0) -- (2, 0) node[right] {$x$};
\draw[->] (0, -1.5) -- (0, 2) node[above] {$y$};
\draw[line width=0.5mm][->] (0, 0) -- (1, 0) node[below] {$v_1$};
\draw[line width=0.5mm][->] (0, 0) -- (-1, 1) node[left] {$v_2$};
\draw[line width=0.5mm][->] (0, 0) -- (0, -1) node[left] {$v_3$};
\end{tikzpicture}
\caption{$\mathbb{CP}^2$}
\end{subfigure}\qquad
\begin{subfigure}[b][4.7cm][s]{.4\textwidth}
\centering
\begin{tikzpicture}
\draw[->] (-2, 0) -- (2, 0) node[right] {$x$};
\draw[->] (0, -1.5) -- (0, 2) node[above] {$y$};
\draw[line width=0.5mm][->] (0, 0) -- (1, 0) node[below] {$v_1$};
\draw[line width=0.5mm][->] (0, 0) -- (0, 1) node[right] {$v_2$};
\draw[line width=0.5mm][->] (0, 0) -- (-1, 1) node[left] {$v_3$};
\draw[line width=0.5mm][->] (0, 0) -- (0, -1) node[left] {$v_4$};
\end{tikzpicture}
\caption{Blow up $(v_1,v_2)$ to $\Sigma_1$}
\end{subfigure}\qquad
\begin{subfigure}[b][4.4cm][s]{.4\textwidth}
\centering
\begin{tikzpicture}[state/.style ={circle, draw}]
\node[state] (a) {$p_1$};
\node[state] (b) [above right=of a] {$p_2$};
\node[state] (c) [above left=of b] {$p_3$};
\path (a) [->] edge node [right] {$(1,0)$} (b);
\path (b) [->] edge node [right] {$(-1,1)$} (c);
\path (c) [->] edge node [left] {$(0,-1)$} (a);
\end{tikzpicture}
\caption{$\mathbb{CP}^2$}
\end{subfigure}\qquad
\begin{subfigure}[b][4.4cm][s]{.4\textwidth}
\centering
\begin{tikzpicture}[state/.style ={circle, draw}]
\node[state] (A) {$p_1$};
\node[state] (B) [above left=of A] {$p_3$};
\node[state] (C) [above right=of A] {$p_2'$};
\node[state] (D) [above right=of B] {$p_2''$};
\path (B) [->] edge node[left] {$(0,-1)$} (A);
\path (A) [->] edge node [right] {$(1,0)$} (C);
\path (D) [->] edge node [left] {$(-1,1)$} (B);
\path (C) [->] edge node [right] {$(0,1)$} (D);
\end{tikzpicture}
\caption{Blow up $p_2$ to $\Sigma_1$}
\end{subfigure}
\caption{From $\mathbb{CP}^2$ to $\Sigma_1$}\label{tfig12}
\end{figure}

\begin{figure}
\centering
\begin{subfigure}[b][4.5cm][s]{.28\textwidth}
\centering
\begin{tikzpicture}
\draw[->] (-1.5, 0) -- (1.5, 0) node[right] {$x$};
\draw[->] (0, -1.5) -- (0, 1.5) node[above] {$y$};
\draw[line width=0.5mm][->] (0, 0) -- (0.7, 0) node[below] {$v_1$};
\draw[line width=0.5mm][->] (0, 0) -- (0, 0.7) node[right] {$v_2$};
\draw[line width=0.5mm][->] (0, 0) -- (-0.7, 0.7) node[left] {$v_3$};
\draw[line width=0.5mm][->] (0, 0) -- (0, -0.7) node[left] {$v_4$};
\end{tikzpicture}
\caption{$\Sigma_n$} \label{tfig13-1}
\end{subfigure}\qquad
\begin{subfigure}[b][4.5cm][s]{.28\textwidth}
\centering
\begin{tikzpicture}
\draw[->] (-1.5, 0) -- (1.5, 0) node[right] {$x$};
\draw[->] (0, -1.5) -- (0, 1.5) node[above] {$y$};
\draw[line width=0.5mm][->] (0, 0) -- (0.7, 0) node[below] {$v_1'$};
\draw[line width=0.5mm][->] (0, 0) -- (0, 0.7) node[right] {$v_2'$};
\draw[line width=0.5mm][->] (0, 0) -- (-0.7, 1.4) node[left] {$v_3'$};
\draw[line width=0.5mm][->] (0, 0) -- (-0.7, 0.7) node[left] {$v_4'$};
\draw[line width=0.5mm][->] (0, 0) -- (0, -0.7) node[left] {$v_5'$};
\end{tikzpicture}
\caption{Blow up $(v_2,v_3)$} \label{tfig13-2}
\end{subfigure}\qquad
\begin{subfigure}[b][4.5cm][s]{.28\textwidth}
\centering
\begin{tikzpicture}
\draw[->] (-1.5, 0) -- (1.5, 0) node[right] {$x$};
\draw[->] (0, -1.5) -- (0, 1.5) node[above] {$y$};
\draw[line width=0.5mm][->] (0, 0) -- (0.7, 0) node[below] {$v_1''$};
\draw[line width=0.5mm][->] (0, 0) -- (0, 0.7) node[right] {$v_2''$};
\draw[line width=0.5mm][->] (0, 0) -- (-0.7, 1.4) node[left] {$v_3''$};
\draw[line width=0.5mm][->] (0, 0) -- (0, -0.7) node[left] {$v_4''$};
\end{tikzpicture}
\caption{Blow down $v_4'$ to $\Sigma_{n+1}$} \label{tfig13-3}
\end{subfigure}\qquad
\begin{subfigure}[b][5.1cm][s]{.4\textwidth}
\centering
\begin{tikzpicture}[state/.style ={circle, draw}]
\node[vertex] (a) at (0,0) {$p_1$};
\node[vertex] (b) at (1.7,1.7) {$p_2$};
\node[vertex] (c) at (0,3.4) {$p_3$};
\node[vertex] (d) at (-1.7,1.7) {$p_4$};
\path (a) [->] edge node[right] {$(1,0)$} (b);
\path (b) [->] edge node [right] {$(0,1)$} (c);
\path (c) [->] edge node [left] {$(-1,n)$} (d);
\path (d) [->] edge node [left] {$(0,-1)$} (a);
\end{tikzpicture}
\caption{$\Sigma_n$}\label{tfig13-4}
\end{subfigure}\qquad
\begin{subfigure}[b][5.1cm][s]{.4\textwidth}
\centering
\begin{tikzpicture}[state/.style ={circle, draw}]
\node[vertex] (a) at (2.3, 1.8) {$p_3'$};
\node[vertex] (b) at (0, 0.8) {$p_3''$};
\node[vertex] (c) at (4, 0) {$p_2$};
\node[vertex] (d) at (0, -0.8) {$p_4$};
\node[vertex] (e) at (2.3, -1.8) {$p_1$};
\path (a) [->] edge node[pos=.5, above, sloped] {$(-1,n+1)$} (b);
\path (c) [->] edge node [right] {$(0,1)$} (a);
\path (b) [->] edge node [right] {$(-1,n)$} (d);
\path (d) [->] edge node [pos=.5, above, sloped] {$(0,-1)$} (e);
\path (e) [->] edge node [right] {$(1,0)$} (c);
\end{tikzpicture}
\caption{Blow up $p_3$ to $(p_3',p_3'')$}\label{tfig13-5}
\end{subfigure}\qquad
\begin{subfigure}[b][4.6cm][s]{.5\textwidth}
\centering
\begin{tikzpicture}[state/.style ={circle, draw}]
\node[vertex] (a) at (0,0) {$p_1$};
\node[vertex] (b) at (1.7,1.7) {$p_2$};
\node[vertex] (c) at (0,3.4) {$p_3'$};
\node[vertex] (d) at (-1.7,1.7) {$p_4'$};
\path (a) [->] edge node[right] {$(1,0)$} (b);
\path (b) [->] edge node [right] {$(0,1)$} (c);
\path (c) [->] edge node [left] {$(-1,n+1)$} (d);
\path (d) [->] edge node [left] {$(0,-1)$} (a);
\end{tikzpicture}
\caption{Blow down $(p_3'',p_4)$ to $p_4'$, to $\Sigma_{n+1}$}\label{ttfig13-6}
\end{subfigure}\qquad
\caption{From $\Sigma_n$ to $\Sigma_{n+1}$}\label{tfig13}
\end{figure}

Take the $T^2$-action on $\mathbb{CP}^2$ in Example \ref{te1}. We blow up $p_2$ that has weights $\{(-1,0),(-1,1)\}$ in the sense of Lemma \ref{tl41}. Then we get the Hirzebruch surface $\Sigma_1$ with the action in Example \ref{te2} for $n=1$. Figure \ref{tfig12} illustrates this blow up on the fan level and on the graph level.

We describe how to blow up and down $\Sigma_n$ to $\Sigma_{n+1}$. Consider the $T^2$-action on $\Sigma_n$ in Example \ref{te2}. We blow up $p_3$ that has weights $\{(0,-1),(-1,n)\}$ in the sense of Lemma \ref{tl41} to $(-1,n+1)$-sphere $(p_3',p_3'')$, to get a 4-dimensional complex torus manifold $M'$ described by Figure \ref{tfig13-5}. For the fan, we blow up $(v_2,v_3)$. In $M'$, there are $(-1,n+1)$-sphere $(p_3',p_3'')$, $(-1,n)$-sphere $(p_3'',p_4)$, and $(0,-1)$-sphere $(p_4,p_1)$. Therefore, we can blow down the $(-1,n)$-sphere $(p_3'',p_4)$ in the sense of Lemma \ref{tl42} to a fixed point $p_4'$ that has weights $\{(1,-1-n),(0,-1)\}$ to get the Hirzebruch surface $\Sigma_{n+1}$ with the action in Example \ref{te2}. For the fan, we blow down $v_4'$. Figure \ref{tfig13} illustrates these blow up and blow down on the fan level and on the graph level.

\section{Further properties} \label{s5.5}

In this section, we explore properties of 4-dimensional almost complex torus manifolds. For any such manifold, we find a lower bound on the number of fixed points (Proposition \ref{tp75}), find a relationship between the Hirzebruch $\chi_y$-genus and blow up and down (Lemma \ref{tl76}), and find a necessary and sufficient condition for the Hirzebruch $\chi_y$-genus (Theorem \ref{tt78}) and that for the Chern numbers of such a manifold (Theorem \ref{tt79}).

\begin{pro} \label{tp75} Let $M$ be a 4-dimensional almost complex torus manifold. Then $M$ has at least three fixed points.
\end{pro}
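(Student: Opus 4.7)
The plan is to apply Proposition~\ref{tp24} to extract a family $\Delta=\{V_1,\ldots,V_m\}$ of admissible multi-fans describing $M$, and then show that each multi-fan $V_j$ contributes at least three vectors, which forces the fixed point set of $M$ to contain at least three points.

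First, I would observe that since $M^{T^2}$ is non-empty (by the definition of an almost complex torus manifold), we have $m\geq 1$. By Definition~\ref{td13}, the fixed point set is partitioned as $M^{T^2}=\bigsqcup_{j=1}^m A_j$ with $|A_j|=k_j$, where $V_j=\{v_{j,1},\ldots,v_{j,k_j}\}$. So the total number of fixed points equals $\sum_{j=1}^m k_j$, and it suffices to show $k_j\geq 3$ for every $j$.

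The key step is a case analysis using the admissibility conditions of Definition~\ref{td12}. If $k_j=1$, then $v_{j,0}=v_{j,k_j}=v_{j,1}$, and condition (1) requires $v_{j,0}$ and $v_{j,1}$ to form a basis of $\mathbb{Z}^2$, which is absurd since they are equal. If $k_j=2$, condition~(2) applied with $i=1$ gives $v_{j,2}=-a_{j,1}v_{j,1}-v_{j,0}=-a_{j,1}v_{j,1}-v_{j,2}$, hence $2v_{j,2}+a_{j,1}v_{j,1}=0$; but condition~(1) says $\{v_{j,1},v_{j,2}\}$ is a basis of $\mathbb{Z}^2$, so this linear relation forces $v_{j,2}=0$, contradicting that multi-fan vectors are non-zero (Definition~\ref{td11}). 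Therefore $k_j\geq 3$ in every case, and $|M^{T^2}|=\sum_j k_j\geq 3m\geq 3$.

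There is no serious obstacle here; the only thing to be careful about is keeping the index conventions $v_{j,0}:=v_{j,k_j}$ and $v_{j,k_j+1}:=v_{j,1}$ straight when invoking admissibility. As an alternative presentation one could argue equivalently on the graph side via Lemma~\ref{tl21}: the graph $\Gamma$ describing $M$ is $2$-regular, has no self-loops by Proposition~\ref{tp22}, and has no multiple edges by the discussion following Proposition~\ref{tp23}, so every connected component of $\Gamma$ is a simple cycle of length at least $3$. Either route is short; I prefer the multi-fan version since it uses the admissibility conditions directly.
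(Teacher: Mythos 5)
Your proposal is correct and follows the same route as the paper: extract a family of admissible multi-fans describing $M$, note that the number of fixed points is $\sum_j k_j$, and conclude that this is at least $3$. In fact your case analysis ruling out $k_j\in\{1,2\}$ via the admissibility conditions supplies the justification that the paper's one-line proof leaves implicit (the only cosmetic quibble: in the $k_j=2$ case the relation $2v_{j,2}+a_{j,1}v_{j,1}=0$ is most cleanly contradicted by noting that linear independence forces the coefficient $2$ to vanish, rather than by deducing $v_{j,2}=0$).
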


\begin{proof} 
By Theorem \ref{tt19}, there exists a family of admissible multi-fans $V_1$, $\cdots$, $V_m$ describing $M$. For each $1 \leq j \leq m$, let $V_j=\{v_{j,1},\cdots,v_{j,k_j}\}$. The total number of fixed points is equal to $\sum_{j=1}^m k_j$, and we can achieve the minimum value with $m=1$ and $k_1=3$.
\end{proof}

Let $M$ be a compact almost complex manifold. The Hirzebruch $\chi_y$-genus is the genus belonging to the power series $\frac{x(1+ye^{-x(1+y)})}{1-e^{-x(1+y)}}$. Let $\dim M=2n$. We have $\chi_y(M)=\sum_{i=0}^n (\int_M T_i^n) y^i$, where $T_i^n$ is a rational combination of products of Chern classes. When $n=2$, $T_0^2=T_2^2=\frac{c_1^2+c_2}{12}$ and $T_1^2=\frac{c_1^2-5c_2}{6}$, where $c_1$ and $c_2$ denote the first and second Chern classes of $M$, respectively.

The Hirzebruch $\chi_y$-genus of a compact almost complex manifold $M$ contains three pieces of information; $\chi_{-1}(M)=\chi(M)$ is the Euler characteristic of $M$, $\chi_0(M)=\textrm{Todd}(M)$ is the Todd genus of $M$, and $\chi_1(M)=\textrm{sign}(M)$ is the signature of $M$.

For an almost complex torus manifold, all the coefficients of its Hirzebruch $\chi_y$-genus are non-zero.

\begin{theo} \label{tt71} \cite{J6}
Let $M$ be a $2n$-dimensional almost complex torus manifold. Then $a_i(M) > 0$ for $0 \leq i \leq n$, where $\chi_y(M)=\sum_{i=0}^n a_i(M) \cdot (-y)^i$ is the Hirzebruch $\chi_y$-genus of $M$. In particular, the Todd genus of $M$ is positive.
\end{theo}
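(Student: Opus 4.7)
The plan is to apply the Kosniowski formula (Theorem \ref{tt62}) to a generic circle subaction. Choose $\xi \in \mathfrak{t} \cap \mathbb{Z}^n$ generating a subcircle $S^1 \subset T^n$ with the same fixed-point set as the full $T^n$-action (possible since there are only finitely many orbit types) and such that $\langle w, \xi\rangle \neq 0$ for every $T^n$-weight $w$ at every fixed point $p$. Then each $p \in M^{T^n}$ is an isolated $S^1$-fixed point with $n$ nonzero $S^1$-weights, and Theorem \ref{tt62} gives
\begin{equation*}
\chi_y(M) \;=\; \sum_{p \in M^{T^n}} (-y)^{d(-, p)},
\end{equation*}
where $d(-, p)$ counts the negative $S^1$-weights at $p$. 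Comparing coefficients with $\chi_y(M) = \sum_{i=0}^{n} a_i(M) (-y)^i$ identifies $a_i(M) = \#\{p : d(-, p) = i\} \geq 0$, with $a_0(M)$ equal to the Todd genus.

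To promote non-negativity to strict positivity, first note the symmetry $a_i(M) = a_{n-i}(M)$, obtained by applying the same formula with $-\xi$ in place of $\xi$; this swaps positive and negative $S^1$-weights at every $p$. It therefore suffices to exhibit, for each $0 \leq i \leq \lfloor n/2\rfloor$, at least one fixed point with exactly $i$ negative $S^1$-weights. Here one uses the structural property of an almost complex torus manifold that the $n$ weights $w_{p,1}, \ldots, w_{p,n}$ at any fixed point $p$ form a $\mathbb{Z}$-basis of $\mathbb{Z}^n$ (a fact already used in Proposition \ref{tp23}); in particular, as $\xi$ ranges over generic directions, $d(-, p)$ at a chosen $p$ can be made to attain any value in $\{0, 1, \ldots, n\}$.

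The main obstacle is bridging between this pointwise flexibility and the $\xi$-independent invariant $a_i(M)$. The plan is a wall-crossing argument: when $\xi$ deforms across a hyperplane $\{\langle w_{p,j}, \cdot\rangle = 0\}$, $d(-, p)$ jumps by $\pm 1$ at a single $p$, yet the $a_i(M)$ are topological invariants and hence constant, which forces a compensating jump at another fixed point; propagating through chambers produces a fixed point of every prescribed index. In the $2n = 4$ setting relevant to this paper one can bypass the general argument and read the counts directly off the multi-fan: Proposition \ref{tp63} gives $a_0(M) = a_2(M) = \sum_j T(V_j)$, positive because each winding number is positive, while $a_1(M) = |M^{T^2}| - 2\sum_j T(V_j)$ is positive because admissibility of each multi-fan $V_j = \{v_{j,1}, \ldots, v_{j,k_j}\}$ (successive vectors forming a $\mathbb{Z}^2$-basis, together with the three-term linear recursion of Definition \ref{td12}) forces $k_j > 2 T(V_j)$.
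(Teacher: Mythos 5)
This theorem is imported from \cite{J6} and the paper gives no proof of it, so there is no in-paper argument to compare against; your proposal is essentially the standard proof (and, as far as I can tell, the one in the cited reference). The core of your argument is sound and complete once you notice that your self-identified ``main obstacle'' is not an obstacle at all: since $\chi_y(M)$, and hence each $a_i(M)$, is independent of the choice of generic $\xi$, and since the weights at a fixed point $p$ form a $\mathbb{Z}$-basis of $\mathbb{Z}^n$ (so that $\xi\mapsto(\langle w_{p,1},\xi\rangle,\dots,\langle w_{p,n},\xi\rangle)$ is an isomorphism and every sign pattern is realized on a nonempty open cone, which still contains generic directions), you may simply choose, for each $i$, a generic $\xi_i$ with exactly $i$ negative pairings at the single chosen $p$ and conclude $a_i(M)=\#\{q: d_{\xi_i}(-,q)=i\}\ge 1$ directly. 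The wall-crossing machinery is therefore dispensable, which is fortunate because as literally stated it is slightly off: a weight hyperplane $\{\langle w,\cdot\rangle=0\}$ can be shared by several fixed points, so crossing it need not change $d(-,\cdot)$ ``at a single $p$,'' and the claim that compensating jumps ``produce a fixed point of every prescribed index'' is not actually an argument. I would delete that paragraph and keep the direct version. Your dimension-four shortcut via Proposition \ref{tp63} is also correct ($a_0=a_2=\sum_j T(V_j)\ge 1$ since consecutive vectors of a multi-fan are linearly independent, forcing $T(V_j)\ge 1$ and $k_j>2T(V_j)$, whence $a_1=\sum_j k_j-2\sum_j T(V_j)>0$), but note it only covers $n=2$, so the general statement still rests on the first argument.
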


For a torus action on a compact almost complex manifolds with isolated fixed points, these coefficients are non-negative.

\begin{lem} \label{tl72} \cite{J6}
Let a $k$-dimensional torus $T^k$ act on a $2n$-dimensional compact almost complex manifold $M$ with isolated fixed points. Let $\chi_y(M)=\sum_{i=0}^n a_i(M) \cdot (-y)^i$ be the Hirzebruch $\chi_y$-genus of $M$. Then the following hold:
\begin{enumerate}
\item $a_i(M) \geq 0$ for $0 \leq i \leq n$.
\item The number of fixed points is $\sum_{i=0}^n a_i(M)$.
\item For $0 \leq i \leq n$, $a_i(M)=a_{n-i}(M)$.
\end{enumerate}
\end{lem}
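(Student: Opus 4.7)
The plan is to reduce to a single circle action via a generic-subcircle argument and then apply the Kosniowski formula (Theorem \ref{tt62}). Because the $T^k$-action has only finitely many orbit types, one may choose an integral element $\xi \in \mathfrak{t} \cap \mathbb{Z}^k$ such that $\langle w, \xi\rangle \neq 0$ for every $T^k$-weight $w$ at every fixed point; the circle $S^1 \subset T^k$ generated by $\xi$ then has the same fixed point set as $T^k$, and all $S^1$-weights at those fixed points are non-zero. This is exactly the device used in the proof of Proposition \ref{tp63}.

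Since the fixed points are isolated, each fixed component $F$ in Theorem \ref{tt62} is a single point with $\chi_y(F)=1$, so the Kosniowski formula collapses to the two identities
\[
\chi_y(M)=\sum_{p \in M^{S^1}} (-y)^{d(-,p)}=\sum_{p \in M^{S^1}} (-y)^{d(+,p)},
\]
where $d(\pm,p)$ counts the number of negative (respectively positive) $S^1$-weights at $p$. Because every $S^1$-weight is non-zero, $d(-,p)+d(+,p)=n$ at every fixed point $p$. Matching the first identity with $\chi_y(M)=\sum_{i=0}^n a_i(M)(-y)^i$ identifies
\[
a_i(M)=\#\{p \in M^{T^k}:d(-,p)=i\},
\]
which is a non-negative integer, proving (1). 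Summing these counts over $i \in \{0,\dots,n\}$ partitions $M^{T^k}$ according to the value of $d(-,\cdot)$ and thus recovers the total number of fixed points, proving (2).

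For (3), I would use the second equality in the Kosniowski formula. Substituting $d(+,p)=n-d(-,p)$, the second sum becomes
\[
\sum_{p}(-y)^{n-d(-,p)}=\sum_{i=0}^n \#\{p:d(-,p)=n-i\}\,(-y)^i=\sum_{i=0}^n a_{n-i}(M)(-y)^i,
\]
and comparing coefficients with $\sum_i a_i(M)(-y)^i$ forces $a_i(M)=a_{n-i}(M)$. The only step that requires genuine care is the genericity argument that selects $\xi$ so that no $T^k$-weight lies in $\ker(\xi)$; once this reduction to a single circle is in place, everything else is a direct bookkeeping of the Kosniowski formula, and I do not anticipate any substantive obstacle.
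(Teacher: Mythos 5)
Your argument is correct. Note that the paper does not prove Lemma \ref{tl72} at all --- it is imported from \cite{J6} with a citation --- so there is no in-paper proof to compare against; but your derivation (pass to a generic subcircle $S^1$ generated by an integral $\xi$ with $\langle w,\xi\rangle\neq 0$ for every weight $w$, then read off $a_i(M)=\#\{p: d(-,p)=i\}$ from the Kosniowski formula of Theorem \ref{tt62} and use $d(-,p)+d(+,p)=n$ for the symmetry) is exactly the standard proof and is the same generic-subcircle device the paper itself employs in the proof of Proposition \ref{tp63}. No gaps.
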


The following lemma is immediate from definition.

\begin{lem} \label{tl76}
Let $V$ be a multi-fan. Blow up and blow down of vectors in $V$ in the sense of Definition \ref{td112} neither increase nor decrease its winding number $T(V)$.
\end{lem}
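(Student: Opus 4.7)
My plan is homotopy-theoretic: I will interpret $T(V)$ as the topological winding number of an associated polygonal loop around the origin, and argue that a blow up amounts to a homotopy of this loop in $\mathbb{R}^2 \setminus \{0\}$. Since blow down is the inverse operation of blow up, it suffices to verify the invariance under blow up.

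Given a multi-fan $V = \{v_1, \ldots, v_k\}$, I will form the closed polygonal loop $\gamma_V$ in $\mathbb{R}^2$ obtained by joining $v_1, v_2, \ldots, v_k, v_1$ successively by straight line segments. Since consecutive vectors in a multi-fan are never negatively proportional (otherwise the (counter)clockwise order would fail at that step), each segment of $\gamma_V$ avoids the origin, and $T(V)$ coincides with the ordinary topological winding number of $\gamma_V$ about $0$. For $V'$ the blow up at $(v_i, v_{i+1})$, the loop $\gamma_{V'}$ differs from $\gamma_V$ only by replacing the segment $[v_i, v_{i+1}]$ with the concatenation $[v_i, v_i+v_{i+1}] \cup [v_i+v_{i+1}, v_{i+1}]$.

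The key step will be to show that the closed triangle $\Delta$ with vertices $v_i$, $v_i + v_{i+1}$, $v_{i+1}$ does not contain the origin. To prove this, I write a generic point of $\Delta$ as
\begin{equation*}
\alpha v_i + \beta(v_i+v_{i+1}) + \gamma v_{i+1} = (\alpha+\beta)\,v_i + (\beta+\gamma)\,v_{i+1}
\end{equation*}
with $\alpha,\beta,\gamma \geq 0$ and $\alpha+\beta+\gamma = 1$, noting that the coefficients $\alpha+\beta$ and $\beta+\gamma$ are non-negative with sum $1+\beta \geq 1$. When $v_i$ and $v_{i+1}$ are linearly independent (the admissible case, and more generally the generic one) this combination cannot vanish; when they are positively proportional, $\Delta$ degenerates to a segment lying on a single ray through $0$ that avoids the origin. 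In either case $0 \notin \Delta$, so the filled triangle $\Delta$ supplies a null-homotopy inside $\mathbb{R}^2 \setminus \{0\}$ between the replaced and replacing portions of $\gamma_V$. Hence $\gamma_V$ and $\gamma_{V'}$ are freely homotopic in $\mathbb{R}^2 \setminus \{0\}$, giving $T(V) = T(V')$.

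The only real obstacle is the triangle-avoidance claim above, which is resolved by the direct convexity computation; the rest of the argument is formal. In particular, this approach bypasses the need to track individual angular increments $\alpha_i$ and case-split according to whether these are at most or greater than $\pi$.
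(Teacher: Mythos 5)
Your proof is correct, but it is genuinely different from (and considerably more elaborate than) what the paper does: the paper offers no argument at all, declaring the lemma ``immediate from definition,'' the implicit point being that the angular increment from $v_i$ to $v_{i+1}$ splits as the sum of the increments from $v_i$ to $v_i+v_{i+1}$ and from $v_i+v_{i+1}$ to $v_{i+1}$, since $v_i+v_{i+1}$ lies in the cone spanned by the two; hence the total angle swept, and with it $T(V)$, is unchanged. Your route instead identifies $T(V)$ with the topological winding number of the polygonal loop $\gamma_V$ and exhibits the blow up as a null-homotopy supported on the triangle with vertices $v_i$, $v_i+v_{i+1}$, $v_{i+1}$, with the convexity computation showing $0\notin\Delta$ doing the real work. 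That buys a clean, bookkeeping-free argument and correctly reduces blow down to blow up. The one point you should make explicit is the identification $T(V)=\mathrm{wind}(\gamma_V,0)$ itself: it is not enough that each segment of $\gamma_V$ avoids the origin, since a straight segment from $v_i$ to $v_{i+1}$ records the argument change along the short way, which matches the prescribed (counter)clockwise increment only because consecutive vectors of a multi-fan are separated by an angle strictly less than $\pi$ (they span a two-dimensional cone, as in Definition \ref{td12} and the paper's examples). With that remark added, your argument is complete; without it, the homotopy shows only that the two polygonal loops have equal topological winding number, not yet that either equals $T$ of the corresponding multi-fan.
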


Let $M$ be a 4-dimensional almost complex torus manifold. Blow up of a fixed point and blow down of $w$-sphere in the sense of Lemmas \ref{tl41} and \ref{tl42} neither increase nor decrease the Todd genus of $M$, while decreasing and increasing $\chi^1(M)$ by 1, respectively, where $\chi_y(M)=\sum_{i=0}^2 \chi^i(M) \cdot y^i$ is the Hirzebruch $\chi_y$-genus of $M$.

\begin{lem} \label{tl77}
Let $M$ be a 4-dimensional almost complex torus manifold. 
\begin{enumerate}[(1)]
\item Let $M'$ be an almost complex torus manifold obtained by blowing up some fixed point of $M$ in the sense of Lemma \ref{tl41}. Then the Hirzebruch $\chi_y$-genus of $M'$ is
\begin{center}
$\chi_y(M')=\chi_y(M)-y$.
\end{center}
\item Let $M''$ be an almost complex torus manifold obtained by blowing down some $w$-sphere of $M$ in the sense of Lemma \ref{tl42}. Then the Hirzebruch $\chi_y$-genus of $M''$ is
\begin{center}
$\chi_y(M'')=\chi_y(M)+y$.
\end{center}
\end{enumerate}
\end{lem}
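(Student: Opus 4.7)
The plan is to compute $\chi_y$ on both sides by the Kosniowski formula (Theorem \ref{tt62}) applied to a generic subcircle of $T^2$, so that the difference reduces to a finite fixed-point calculation localized at the blow up/down site.

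For part (1), let $\{w_1,w_2\}$ denote the $T^2$-weights at the fixed point $p$ being blown up, so by Lemma \ref{tl41} the new fixed points $p',p''$ carry $T^2$-weights $\{w_1,w_2-w_1\}$ and $\{w_2,w_1-w_2\}$ respectively, while all other fixed points of $M'$ correspond bijectively to those of $M$ with the same weights. I will pick $\xi\in\mathbb{Z}^2$ generically, subject to the conditions that $\xi$ pairs non-trivially with every $T^2$-weight at every fixed point of $M$ and of $M'$ (in particular $(w_2-w_1)(\xi)\neq 0$). These are finitely many hyperplane conditions in $\mathbb{R}^2$, so such $\xi$ exists, and the subcircle $S^1\subset T^2$ it generates has the same fixed point set as $T^2$ on both manifolds with no zero $S^1$-weights.

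Applying Theorem \ref{tt62}, the contributions from fixed points other than $p$ (resp.\ $p',p''$) cancel, and one obtains
\begin{equation*}
\chi_y(M')-\chi_y(M)=(-y)^{d(-,p')}+(-y)^{d(-,p'')}-(-y)^{d(-,p)}.
\end{equation*}
Setting $a=w_1(\xi)$ and $b=w_2(\xi)$, the $S^1$-weights are $\{a,b\}$ at $p$, $\{a,b-a\}$ at $p'$, and $\{b,a-b\}$ at $p''$. I will verify by a four-case analysis on the signs of $(a,b)$ (in each case the sign of $a-b$ is determined up to swapping the roles of $p'$ and $p''$, which leaves the sum unchanged) that the right-hand side always equals $-y$. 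For instance, when $a,b>0$ with $b>a$, the three contributions are $1$, $-y$, $1$, giving the difference $-y$; the remaining three sign patterns are similar. This establishes (1).

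For part (2), note that blow up of a fixed point and blow down of a $w$-sphere, as in Lemmas \ref{tl41} and \ref{tl42}, are mutually inverse: blowing up the fixed point $p$ of $M''$ produces $M$. Applying part (1) with the roles swapped yields $\chi_y(M)=\chi_y(M'')-y$, hence $\chi_y(M'')=\chi_y(M)+y$. The main (and only mildly delicate) point in the plan is ensuring that a single $\xi$ is simultaneously generic for $M$ and $M'$, so that Kosniowski applies on both sides; after that, the argument is a routine sign enumeration.
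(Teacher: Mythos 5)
Your proof is correct, but it takes a genuinely different route from the paper's. You localize the difference $\chi_y(M')-\chi_y(M)$ directly via the Kosniowski formula (Theorem \ref{tt62}) applied to a common generic subcircle, reducing everything to the three fixed points at the blow-up site and a short sign enumeration; I checked the four sign patterns of $(a,b)=(w_1(\xi),w_2(\xi))$ and each indeed yields $-y$, and your observation that swapping $p'\leftrightarrow p''$ absorbs the ambiguity in the sign of $a-b$ is what makes four cases suffice. The paper instead argues through the coefficients $a_i(M)$: by Lemma \ref{tl72}, $a_0=a_2=\mathrm{Todd}(M)$ and $\sum a_i$ is the number of fixed points; the Todd genus is unchanged under blow up because it equals the winding number of the associated multi-fan (Proposition \ref{tp63}, Lemma \ref{tl76}, Proposition \ref{tp43}); and the fixed-point count goes up by one, which forces $a_1(M')=a_1(M)+1$. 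The paper's route avoids any case analysis but leans on the multi-fan machinery already built; yours is more self-contained and in fact implicitly reproves the invariance of $a_0$ and $a_2$, since the localized difference comes out to exactly $-y$ with no $y^0$ or $y^2$ terms. Two minor points to tighten: in part (2) the blow up of $M''$ at $p$ need not be literally $M$ as an almost complex manifold, but it has the same fixed point data, so its $\chi_y$-genus agrees with that of $M$ by the Kosniowski formula, which is all your argument needs (alternatively, just run the same localized computation for the blow-down directly); and you should note that blowing up $p$ in $M''$ is legitimate because the blow-down construction of Lemma \ref{tl42} produces a point with an integrable structure nearby, so the hypothesis of Lemma \ref{tl41} is satisfied.
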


\begin{proof}
Let $\chi_y(M)=\sum_{i=0}^2 a_i(M) \cdot (-y)^i$ denote the Hirzebruch $\chi_y$-genus of $M$ and similarly for $M'$ and $M''$. By Lemma \ref{tl72}, $a_0(M)=a_2(M)$ and the total number of fixed points of $M$ is $a_0(M)+a_1(M)+a_2(M)=2 \cdot a_0(M)+a_1(M)$, which is equal to $2 \cdot \mathrm{Todd}(M)+a_1(M)$ since $a_0(M)=\mathrm{Todd}(M)$. Similarly, $a_0(M')=a_2(M')=\mathrm{Todd}(M')$ and the total number of fixed points of $M'$ is $2 \cdot \mathrm{Todd}(M')+a_1(M')$. Since blowing up of a fixed point in the sense of Lemma \ref{tl41} increases the number of fixed points by $1$, the total number of fixed points of $M'$ is $2 \cdot \mathrm{Todd}(M)+a_1(M)+1$ and hence $2 \cdot \mathrm{Todd}(M')+a_1(M')=2 \cdot \mathrm{Todd}(M)+a_1(M)+1$. By Proposition \ref{tp43}, blow up of a fixed point in the sense of Lemma \ref{tl41} corresponds to blow up of a family of multi-fans describing $M$. On the other hand, blow up of a multi-fan $V$ does not change its winding number $T(V)$. Therefore, by Proposition \ref{tp63} and Lemma \ref{tl76}, blowing up of a fixed point in the sense of Lemma \ref{tl41} does not change the Todd genus, that is, $\mathrm{Todd}(M)=\mathrm{Todd}(M')$. Therefore, $a_1(M')=a_1(M)+1$. This proves (1). The proof of (2) is similar; since blowing down of some $w$-sphere in the sense of Lemma \ref{tl42} decreases the number of fixed points by $1$ and does not change the Todd genus, it follows that $a_1(M'')=a_1(M)-1$; thus $\chi_y(M'')=\chi_y(M)+y$. \end{proof}

We provide a necessary and sufficient condition for the Hirzebruch $\chi_y$-genus of a 4-dimensional almost complex torus manifold.

\begin{theo} \label{tt78}
There exists a 4-dimensional almost complex torus manifold $M$ with the Hirzebruch $\chi_y$-genus $\chi_y(M)=n_0-n_1 \cdot y+n_2 \cdot y^2$ if and only if $n_0=n_2$ and $n_0$, $n_1$, and $n_2$ are positive integers.
\end{theo}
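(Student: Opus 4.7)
The forward direction is immediate. By Lemma~\ref{tl72}, the coefficients of $\chi_y(M) = a_0(M) - a_1(M) y + a_2(M) y^2$ satisfy $a_0(M) = a_2(M)$, so $n_0 = n_2$; and by Theorem~\ref{tt71} every $a_i(M)$ is positive, so $n_0, n_1, n_2$ are positive integers. For the converse, given positive integers $n_0$ and $n_1$, my plan is as follows: construct an admissible multi-fan $V_{n_0}$ with winding $n_0$ and exactly $2n_0 + 1$ vectors; then apply $n_1 - 1$ graph blow-ups in the sense of Definition~\ref{td113}, each of which preserves the winding (Lemma~\ref{tl76}) but adds one vertex; and finally invoke Theorem~\ref{tt33} to obtain a 4-dimensional almost complex torus manifold $M$ described by the resulting graph. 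Proposition~\ref{tp63} and Lemma~\ref{tl72} then give $\mathrm{Todd}(M) = a_0(M) = a_2(M) = n_0$ and $a_1(M) = (2n_0 + n_1) - 2n_0 = n_1$, so $\chi_y(M) = n_0 - n_1 y + n_0 y^2$.

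For $n_0 = 1$, take $V_1$ to be the fan of $\mathbb{CP}^2$ from Example~\ref{te3}. For $n_0 \geq 2$, I propose
\[
v_1 = (1,0), \ \ v_2 = (0,1), \ \ v_{2i+1} = \bigl(-(2i-1),\, -1\bigr) \ \text{for } 1 \leq i \leq n_0, \ \ v_{2i+2} = (2i,\, 1) \ \text{for } 1 \leq i \leq n_0 - 1.
\]
A routine determinant computation verifies that consecutive vectors form a basis of $\mathbb{Z}^2$ and that the admissibility recurrence $v_{i+1} = -a_i v_i - v_{i-1}$ holds with the integer tuple $(a_1, a_2, a_3, \ldots, a_{2n_0}, a_{2n_0+1}) = (2n_0 - 1,\, 1,\, 2,\, \ldots,\, 2,\, 1)$, where the $2n_0 - 2$ middle entries all equal $2$.

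The main obstacle is verifying that the vectors of $V_{n_0}$ are arranged in counterclockwise order and make exactly $n_0$ revolutions around the origin. The plan is to compute the polar angles explicitly: $v_{2i+1}$ has argument $\pi + \arctan\bigl(1/(2i-1)\bigr) \in (\pi, 3\pi/2)$ and $v_{2i+2}$ has argument $\arctan\bigl(1/(2i)\bigr) \in (0, \pi/2)$. Tracking the CCW increment from each vector to the next produces the step sequence $\pi/2,\, 3\pi/4,\, 3\pi/4 + \arctan(1/2),\, \pi + \arctan(1/3) - \arctan(1/2),\, \ldots,\, \pi - \arctan\bigl(1/(2n_0 - 1)\bigr)$, each term of which lies in $(0, 2\pi)$, confirming that the sequence is monotone CCW. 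The $\arctan$ contributions telescope to zero, and the constant parts sum to $\pi/2 + 3\pi/4 + 3\pi/4 + (2n_0 - 2)\pi = 2\pi n_0$, so the winding number of $V_{n_0}$ equals $n_0$, as needed.
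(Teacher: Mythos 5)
Your proof is correct and follows essentially the same strategy as the paper's: the forward direction via Theorem~\ref{tt71} and Lemma~\ref{tl72}, and the converse by exhibiting an admissible multi-fan with $2n_0+1$ vectors and winding number $n_0$ (the paper uses $(1,0),(2,1),(-3,-1),(4,1),\dots,(-2n_0-1,-1)$ rather than your fan, but the verification is the same kind of computation) followed by $n_1-1$ blow-ups and an appeal to Proposition~\ref{tp63} and Lemma~\ref{tl72}. The one genuine, and welcome, difference is that you perform the blow-ups on the fan/graph and only then invoke Theorem~\ref{tt33} to realize a manifold, whereas the paper first realizes $M'$ by plumbing and then blows it up geometrically via Lemma~\ref{tl41}; your ordering sidesteps the integrability-near-$p$ hypothesis of that lemma, which the paper does not explicitly verify for the plumbed manifold.
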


\begin{proof}
Let $M$ be a 4-dimensional almost complex torus manifold. Let $\chi_y(M)=n_0-n_1 \cdot y+n_2 \cdot y^2$ be the Hirzebruch $\chi_y$-genus of $M$. By Theorem \ref{tt71}, $n_0$, $n_1$, and $n_2$ are positive. By (3) of Lemma \ref{tl72}, $n_0=n_2$.

Let $n_0$ and $n_1$ be any positive integers. First, we construct a 4-dimensional almost complex torus manifold $M'$ that has $2 \cdot n_0+1$ fixed points, where $n_0=\mathrm{Todd}(M')$. We use Theorem \ref{tt31} to construct such a manifold $M'$ by taking the following multi-fan $V=\{v_1,\cdots,v_k\}$, where $v_1=(1,0)$, $v_2=(2,1)$, $v_3=(-3,-1)$, $v_4=(4,1)$, $v_5=(-5,-1)$, $v_6=(6,1)$, $\cdots$, $v_{k-2}=(-k+2,-1)$, $v_{k-1}=(k-1,1)$, $v_{k}=(-k,-1)$; here $k=2 \cdot n_0+1$. The weights at the fixed points $p_1$, $p_2$, $\cdots$, $p_k$ are $\{(k,1),(1,0)\}$, $\{(-1,0),(2,1)\}$, $\{(-2,-1),(-3,-1)\}$, $\{(3,1),(4,1)\}$, $\{(-4,-1),(-5,-1)\}$, $\cdots$, $\{(k-2,-1),(k-1,1)\}$, $\{(-k+1,-1),(-k,-1)\}$, respectively. Since the winding number $T(V)$ of $V$ is $n_0$, by Proposition \ref{tp63}, the Todd genus of $M'$ is $n_0$. To construct a 4-dimensional almost complex torus manifold $M$ such that $\chi_y(M)=n_0-n_1 \cdot y+n_0 \cdot y^2$, take the manifold $M'$ we constructed, and blow up any fixed points in $M'$ in the sense of Lemma \ref{tl41} $(n_1-1)$-times; by Lemma \ref{tl77}, the resulting manifold $M$ satisfies $\mathrm{Todd}(M)=n_0-n_1 \cdot y+n_0 \cdot y^2$. \end{proof}

In addition, we provide a necessary and sufficient condition for the Chern numbers of a 4-dimensional almost complex torus manifold.

\begin{theo} \label{tt79}
There exists a 4-dimensional almost complex torus manifold $M$ with $c_1^2[M]=10n_0-n_1$ and $c_2[M]=2n_0+n_1$ if and only if $n_0$ and $n_1$ are positive integers.
\end{theo}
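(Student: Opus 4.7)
The plan is to reduce Theorem \ref{tt79} to Theorem \ref{tt78} via the standard formulas expressing the coefficients of the Hirzebruch $\chi_y$-genus in dimension $4$ as rational combinations of Chern numbers. Since $T_0^2=T_2^2=\tfrac{c_1^2+c_2}{12}$ and $T_1^2=\tfrac{c_1^2-5c_2}{6}$, for any $4$-dimensional almost complex manifold $M$ we have
\[
\chi_y(M)=\frac{c_1^2[M]+c_2[M]}{12}+\frac{c_1^2[M]-5c_2[M]}{6}\,y+\frac{c_1^2[M]+c_2[M]}{12}\,y^2.
\]
In particular, if $\chi_y(M)=n_0-n_1 y+n_2 y^2$ then $n_0=n_2$ automatically, and the pair $(c_1^2[M],c_2[M])$ is determined from $(n_0,n_1)$ by the invertible linear system $c_1^2[M]+c_2[M]=12n_0$ and $c_1^2[M]-5c_2[M]=-6n_1$, whose solution is precisely $c_1^2[M]=10n_0-n_1$ and $c_2[M]=2n_0+n_1$.

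For the forward direction, I would assume $M$ is a $4$-dimensional almost complex torus manifold with $c_1^2[M]=10n_0-n_1$ and $c_2[M]=2n_0+n_1$, and substitute into the display above to compute $\chi_y(M)=n_0-n_1 y+n_0 y^2$. Applying Theorem \ref{tt78} then forces $n_0$ and $n_1$ to be positive integers.

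For the backward direction, given positive integers $n_0$ and $n_1$, I would invoke Theorem \ref{tt78} to produce a $4$-dimensional almost complex torus manifold $M$ with $\chi_y(M)=n_0-n_1 y+n_0 y^2$; the invertibility of the linear system then delivers $c_1^2[M]=10n_0-n_1$ and $c_2[M]=2n_0+n_1$, completing the proof. There is no real obstacle here: the result is essentially a linear change of variables from the $\chi_y$-data to the Chern-number data, and Theorem \ref{tt78} does all the actual work.
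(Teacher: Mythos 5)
Your proposal is correct and follows essentially the same route as the paper: both directions reduce to Theorem \ref{tt78} via the identities $T_0^2=T_2^2=\tfrac{c_1^2+c_2}{12}$ and $T_1^2=\tfrac{c_1^2-5c_2}{6}$, with the invertible linear change of variables between $(n_0,n_1)$ and $(c_1^2[M],c_2[M])$ doing the bookkeeping. No gaps.
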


\begin{proof}
Let $M$ be a 4-dimensional almost complex torus manifold. By Theorem \ref{tt78}, the Hirzebruch $\chi_y$-genus of $M$ is
\begin{center}
$\chi_y(M)=n_0-n_1 \cdot y+n_0 \cdot y^2$
\end{center}
for some positive integers $n_0$ and $n_1$. On the other hand,
\begin{center}
$\displaystyle \chi_y(M)=\sum_{i=0}^2 \left(\int_M T_i^2 \right) \cdot y^i$,
\end{center}
where $T_0^2=T_2^2=\frac{c_1^2+c_2}{12}$ and $T_1^2=\frac{c_1^2-5c_2}{6}$. Hence $\int_M c_1^2=10 \cdot n_0-n_1$ and $\int_M c_2=2 \cdot n_0+n_1$.

Let $n_0$ and $n_1$ be positive integers. By Theorem \ref{tt78}, there exists a 4-dimensional almost complex manifold $M$ with $\chi_y(M)=n_0 - n_1 \cdot y + n_0 \cdot y^2$. Since $n_0=\int_M \frac{c_1^2+c_2}{12}$ and $n_1=\int_M \frac{c_1^2-5c_2}{6}$, the second claim of this theorem holds. \end{proof}

Note that the results for circle actions analogous to Theorems \ref{tt78} and \ref{tt79} were obtained in \cite{J5}.

\section{Few fixed points} \label{s5.6}

In this section, we classify 4-dimensional almost complex torus manifolds with few fixed points. By Proposition \ref{tp75}, any 4-dimensional almost complex torus manifold has at least 3 fixed points. If there are 3 fixed points, a fan (and a graph) describing the manifold agrees with that describing a linear action on $\mathbb{CP}^2$ in Example \ref{te3}.

\begin{theo}
Let $M$ be a 4-dimensional almost complex torus manifold with 3 fixed points. Then a fan $\{v_1,v_2,-v_1-v_2\}$ describes $M$, for some $v_1$ and $v_2$ that form a basis of $\mathbb{Z}^2$. Alternatively, Figure \ref{tfig14} describes $M$. In particular, the weights at the fixed points are $\{v_1+v_2,v_1\}$, $\{-v_1,v_2\}$, and $\{-v_2,-v_1-v_2\}$.
\end{theo}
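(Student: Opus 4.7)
The plan is to invoke Theorem \ref{tt19} to extract a family of admissible multi-fans describing $M$, and then to show that under the three-fixed-point hypothesis this family consists of a single multi-fan with exactly three vectors, whose admissibility forces the third vector to be $-v_1-v_2$.

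First I would apply Proposition \ref{tp24} (or Theorem \ref{tt19}) to obtain a family $\Delta$ of admissible multi-fans $V_1,\dots,V_m$ describing $M$. Writing $V_j=\{v_{j,1},\dots,v_{j,k_j}\}$, the total number of fixed points is $\sum_{j=1}^m k_j$, which equals $3$ by hypothesis. I would next rule out the possibility that some $V_j$ has fewer than three vectors. If $k_j=1$, then admissibility would require the single vector $v_{j,1}$ together with itself to form a basis of $\mathbb{Z}^2$, which is absurd. If $k_j=2$, condition~(2) of Definition \ref{td12} reads $v_{j,1}=-a\,v_{j,2}-v_{j,1}$, forcing $v_{j,1}$ to be a scalar multiple of $v_{j,2}$, contradicting condition~(1). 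Hence each $k_j\geq 3$, and so $m=1$ with $k_1=3$; write $V=V_1=\{v_1,v_2,v_3\}$.

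The heart of the argument is to turn admissibility into the equation $v_1+v_2+v_3=0$. From Definition \ref{td12}(2) with $i=2$ one has $v_3=-a_2 v_2-v_1$ for some integer $a_2$. Applying the same relation with $i=3$ gives $v_1=-a_3 v_3-v_2$, and substituting the expression for $v_3$ yields
\[
(1-a_3)v_1+(1-a_2 a_3)v_2=0.
\]
Since $v_1,v_2$ are a $\mathbb{Z}$-basis by Definition \ref{td12}(1), one obtains $a_3=1$ and hence $a_2=1$, so $v_3=-v_1-v_2$. (Condition~(2) with $i=1$ then automatically gives $a_1=1$.) The winding number $T(V)$ equals $1$, so $V$ is in fact a fan, consistent with Proposition \ref{tp63} and Theorem \ref{tt71}.

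To conclude, I would read off the weights from the definition of ``$V$ describes $M$'' (Definition \ref{td13}): the weights at $p_i$ are $\{v_i,-v_{i-1}\}$, giving $\{v_1,v_1+v_2\}$, $\{-v_1,v_2\}$, $\{-v_2,-v_1-v_2\}$ at $p_1,p_2,p_3$ respectively. Finally, Lemma \ref{tl21} translates the fan $V$ into the corresponding graph shown in Figure \ref{tfig14}. The main (minor) obstacle is simply the bookkeeping in step two—verifying that admissible multi-fans must have at least three vectors—since everything else follows from a short linear-algebraic computation modulo the basis condition.
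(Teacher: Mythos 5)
Your proposal is correct and follows essentially the same route as the paper: extract an admissible fan via Theorem \ref{tt19} and use the recursion $v_{i+1}=-a_iv_i-v_{i-1}$ together with the basis condition to force $a_i=1$ and $v_3=-v_1-v_2$. Your extra step ruling out multi-fans with $k_j\le 2$ (so that the family is a single three-vector fan) is a small point the paper leaves implicit, and it checks out.
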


\begin{proof}
Since there are 3 fixed points, by Theorem \ref{tt19}, an admissible fan $V=\{v_1,v_2,v_3\}$ describes $M$ for some $v_1,v_2,v_3 \in \mathbb{Z}^2$. Since $V$ is admissible, $v_{i-1}$ and $v_i$ form a basis of $\mathbb{Z}^2$, and $v_3=-a_2 v_2-v_1$ and $v_2=-a_1 v_1-v_3$ for some integers $a_i$. Then $v_2=-a_1 v_1-v_3=-a_1 v_1- (-a_2 v_2-v_1)=-a_1 v_1+v_1+a_2 v_2=(1-a_1)v_1+a_2 v_2$. Since $v_1$ and $v_2$ form a basis of $\mathbb{Z}^2$, this implies that $a_1=1$ and $a_2=1$. Hence $v_3=-v_1-v_2$. \end{proof}

If there are 4 fixed points, a fan (and a graph) describing the manifold agrees with the one describing the action on the Hirzebruch surface $\Sigma_n$ in Example \ref{te4}.

\begin{theo}
Let $M$ be a 4-dimensional almost complex torus manifold with 4 fixed points. Then a fan $\{v_1,v_2,-v_1+av_2,-v_2\}$ describes $M$ for some $v_1$ and $v_2$ that form a basis of $\mathbb{Z}^2$ and for some integer $a$. Alternatively, Figure \ref{tfig15} describes $M$. In particular, the weights at the fixed points are $\{v_2,v_1\}$, $\{-v_1,v_2\}$, $\{-v_2,-v_1+av_2\}$, $\{v_1-av_2,-v_2\}$.
\end{theo}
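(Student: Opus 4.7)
The plan is to combine Theorem~\ref{tt19} with a count of vectors, then solve the admissibility recurrences explicitly. By Theorem~\ref{tt19}, some family $V_1,\dots,V_m$ of admissible multi-fans describes $M$, and since each $V_j$ contributes $k_j:=|V_j|$ fixed points, $\sum_j k_j=4$. I first observe that every admissible multi-fan has at least three vectors: a one-vector multi-fan $\{v_1\}$ fails admissibility since $v_0=v_1$ cannot form a basis with $v_1$, and a two-vector multi-fan $\{v_1,v_2\}$ fails because the recurrence $v_3=v_1=-a_2v_2-v_1$ forces $2v_1=-a_2v_2$, contradicting that $v_1,v_2$ are a basis of $\mathbb{Z}^2$. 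Hence $m=1$ and the family consists of a single admissible multi-fan $V=\{v_1,v_2,v_3,v_4\}$.

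Next I would unpack the admissibility recurrences $v_{i+1}=-a_iv_i-v_{i-1}$ for $1\leq i\leq 4$. From $v_3=-a_2v_2-v_1$, setting $a:=-a_2$ gives $v_3=-v_1+av_2$. The next recurrence yields $v_4=a_3v_1-(a_3a+1)v_2$. Substituting these expressions into the remaining two recurrences $v_1=-a_4v_4-v_3$ and $v_2=-a_1v_1-v_4$ and comparing coefficients in the basis $(v_1,v_2)$ produces the constraints $a_4=a$, $a_1+a_3=0$, and $a_3a=0$.

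The equation $a_3a=0$ splits the analysis into two cases. If $a_3=0$, then $v_4=-v_2$ and $V=\{v_1,v_2,-v_1+av_2,-v_2\}$ is already in the advertised form. If instead $a=0$ with $a_3\neq 0$, then $v_3=-v_1$ and $v_4=-a_1v_1-v_2$; introducing $w_1:=v_2$ and $w_2:=-v_1$ (still a basis of $\mathbb{Z}^2$) and shifting the cyclic starting index by one, the sequence becomes $\{w_1,w_2,-w_1+a_1w_2,-w_2\}$, the same normal form with $a=a_1$. Either way, $V$ has the claimed shape for some basis of $\mathbb{Z}^2$ and some integer $a$.

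Finally I would verify that such a $V$ is in fact a fan (winding number one) and read off the weights. Choosing coordinates so that $v_1=(1,0)$ and $v_2=(0,1)$, the third vector $(-1,a)$ has argument strictly inside $(90^\circ,270^\circ)$ for every integer $a$, so the four arguments $0^\circ,90^\circ,\theta_3,270^\circ$ are strictly counterclockwise and sum to exactly one revolution. The weights $\{v_i,-v_{i-1}\}$ at each fixed point $p_i$ then give precisely the four pairs listed, and Lemma~\ref{tl21} translates $V$ into the graph of Figure~\ref{tfig15}. The only real subtlety is the bookkeeping in the case $a=0$, where one must recognize the cyclic relabeling that brings the configuration into the canonical form; everything else is routine linear algebra over $\mathbb{Z}$.
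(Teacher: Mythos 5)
Your proposal is correct and follows essentially the same route as the paper: invoke Theorem \ref{tt19}, unwind the admissibility recurrences $v_{i+1}=-a_iv_i-v_{i-1}$ to force $a_3a=0$ (the paper's $a_2a_3=0$), and handle the two resulting cases by a cyclic relabeling. You are in fact somewhat more careful than the paper, since you justify that the describing family must consist of a single four-vector multi-fan and verify the winding-number-one (fan) claim, both of which the paper's proof passes over silently.
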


\begin{proof}
Since there are 4 fixed points, by Theorem \ref{tt19}, an admissible fan $V=\{w_1,w_2,w_3,w_4\}$ describes $M$ for some $w_1,w_2,w_3,w_4 \in \mathbb{Z}^2$. Since $V$ is admissible, $w_{i-1}$ and $w_i$ form a basis of $\mathbb{Z}^2$, and $w_4=-a_3 w_3-w_2$, $w_3=-a_2 w_2-w_1$, $w_2=-a_1 w_1-w_4$, and $w_1=-a_4 w_4-w_3$, for some integers $a_i$. Then
\begin{center}
$w_2=-a_1 w_1-w_4=-a_1 w_1-(a_3 w_3-w_2)=-a_1 w_1-a_3 w_3+w_2=-a_1 w_1- a_3 (-a_2 w_2-w_1)+w_2=(a_3-a_1)w_1+(a_2 a_3 +1)w_2$. 
\end{center}
Since $w_1$ and $w_2$ form a basis of $\mathbb{Z}^2$, this implies that $a_2=0$ or $a_3=0$.

If $a_2=0$, then $w_3=-w_1$ and the theorem follows if we take $v_1=w_4$, $v_2=w_1$, and $a=-a_1$.

If $a_3=0$, then $w_4=-w_2$ and the theorem follows if we take $v_1=w_1$, $v_2=w_2$, and $a=-a_2$.
\end{proof}

\end{document}